\newtheorem{theorem}{Theorem}[section]
\newtheorem{lemma}[theorem]{Lemma}
\newtheorem{proposition}[theorem]{Proposition}
\newtheorem{corollary}[theorem]{Corollary}
\newtheorem{example}[theorem]{Example}
\newtheorem{remark}[theorem]{Remark}
\newtheorem{definition}[theorem]{Definition}
\begin{document}
\title[Analytic Function Theory]{Analytic Function Theory for Operator-Valued Free Probability.}

\author[Williams, John D.]{John D.\ Williams}\let\thefootnote\relax\footnotetext{Supported in part by NSF grant DMS-1302713.}
\address{J.\ Williams, Department of Mathematics, Texas A\&M University,
College Station, TX 77843-3368, USA}
\email{jwilliams@math.tamu.edu} 
\let\thefootnote\relax\footnotetext{\textit{2000 AMS Subject Classification.}  Primary: 46L54, \ Seconday: 46E40}

\begin{abstract}
It is a classical result in complex analysis that the class of functions that arise as the  Cauchy transform of probability measures may be characterized entirely in terms of their analytic and asymptotic properties.  Such transforms are a main object of study in non-commutative probability theory as the function theory encodes information on the probability measures and the various convolution operations.  In extending this theory to operator-valued free probability theory, the analogue of the Cauchy transform is a non-commutative function with domain equal to the non-commutative upper-half plane.  In this paper, we prove an analogous characterization of the Cauchy transforms, again, entirely in terms of their analytic and asymptotic behavior.  We further characterize those functions which arise as the Voiculescu transform of $\boxplus$-infinitely divisible operator-valued distributions.  As consequences of these results, we provide a characterization of infinite divisibility in terms of the domain of the relevant Voiculescu transform, provide a purely analytic definition of the semigroups of completely positive maps associated to infinitely divisible distributions and provide a Nevanlinna representation for non-commutative functions with the appropriate asymptotic behavior.
\end{abstract}
\maketitle

\section{Introduction}
Given a Borel probability measure $\mu$ on $\mathbb{R}$, the \textit{Cauchy transform} of this measure is the function
$$G_{\mu}(z) := \int_{\mathbb{R}} \frac{1}{z-t} d\mu(t) : \mathbb{C}^{+} \rightarrow \mathbb{C}^{-}.$$
The following classical result characterizes this class of functions in terms of their analytic and asymptotic properties, with immediate converse.
\begin{theorem}\label{scalar}
Let $g: \mathbb{C}^{+} \rightarrow \mathbb{C}^{-}$ denote an analytic function such that
$h(z):= g(1/z)$ has analytic continuation to a neighborhood of zero and $(iy)g(iy) \rightarrow 1$ as $y \uparrow \infty$.
Then, there exists a compactly supported Borel probability measure $\mu$ on $\mathbb{R}$ such that $g = G_{\mu}$.
\end{theorem}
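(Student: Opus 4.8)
The plan is to get $g=G_{\mu}$ with $\mu$ a probability measure out of the classical Herglotz--Nevanlinna representation together with the normalization $(iy)g(iy)\to1$, and then to extract compactness of $\operatorname{supp}\mu$ from the analyticity of $h$ at the origin.

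Since $\operatorname{Im}g\le0$ on $\mathbb{C}^{+}$, the function $-g$ is a Pick function, so
\[
-g(z)=\alpha+\beta z+\int_{\mathbb{R}}\frac{1+tz}{t-z}\,d\sigma(t),\qquad \alpha\in\mathbb{R},\ \beta\ge0,\ \sigma\ \text{finite positive}.
\]
I would feed in the asymptotics first. Using $\operatorname{Re}\dfrac{(iy)(1+t\,iy)}{iy-t}=\dfrac{y^{2}(1+t^{2})}{y^{2}+t^{2}}$ one computes $\operatorname{Re}\bigl((iy)g(iy)\bigr)=\beta y^{2}+\int_{\mathbb{R}}\dfrac{y^{2}(1+t^{2})}{y^{2}+t^{2}}\,d\sigma(t)$, which is nondecreasing in $y$; its convergence to $1$ forces, by monotone convergence, $\beta=0$ and $\int_{\mathbb{R}}(1+t^{2})\,d\sigma(t)=1$. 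In particular $\sigma$ has finite first moment, so $\dfrac{1+tz}{z-t}=t+\dfrac{1+t^{2}}{z-t}$ yields $g(z)=\gamma+\int_{\mathbb{R}}\dfrac{1+t^{2}}{z-t}\,d\sigma(t)$ with $\gamma:=-\alpha+\int t\,d\sigma\in\mathbb{R}$; taking imaginary parts of $(iy)g(iy)$ and using dominated convergence on the integral term leaves $\operatorname{Im}\bigl((iy)g(iy)\bigr)=\gamma y+o(1)$, hence $\gamma=0$. Setting $d\mu(t):=(1+t^{2})\,d\sigma(t)$ gives $g=G_{\mu}$ with $\mu(\mathbb{R})=\int(1+t^{2})\,d\sigma=1$, a Borel probability measure.

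For the support, on $\mathbb{C}^{-}$ one has $h(z)=G_{\mu}(1/z)=\int_{\mathbb{R}}\dfrac{z}{1-tz}\,d\mu(t)$, so $h(z)/z=\int_{\mathbb{R}}\dfrac{d\mu(t)}{1-tz}$ takes the value $1$ at $0$ and, by hypothesis, continues analytically across $0$. The step I would carry out is to identify the Taylor coefficients of this continuation at $0$ with the moments $m_{n}=\int t^{n}\,d\mu(t)$ --- equivalently, to show that $G_{\mu}$ itself is analytic in a neighborhood of $\infty$ on the Riemann sphere, with real boundary values on $\{\,|x|>R\,\}\subseteq\mathbb{R}$ for some $R$. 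Once this is known, every $m_{n}$ is finite with $\limsup_{n}|m_{n}|^{1/n}<\infty$, and the inequality $(R')^{2n}\mu(\{\,|t|>R'\,\})\le m_{2n}$ forces $\operatorname{supp}\mu\subseteq[-R',R']$ whenever $R'>\limsup_{n}|m_{2n}|^{1/2n}$; thus $\mu$ is compactly supported. (Alternatively, real boundary values of $G_{\mu}$ off $[-R,R]$ conclude the matter directly through Stieltjes inversion.)

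The first two paragraphs are routine once the Nevanlinna representation is available; the real obstacle is the identification just described. One has to exclude the possibility that $h$ extends across $0$ only by continuing $G_{\mu}(1/\cdot)$ onto another Riemann sheet, with $\mu$ still charging every neighbourhood of $\infty$ --- a phenomenon that genuinely occurs (for instance when $\mu$ has a real-analytic density decaying like $t^{-2}$ at infinity, whose Cauchy transform continues past the unbounded part of its support). Ruling it out should be done by invoking the reflection $G_{\mu}(\bar w)=\overline{G_{\mu}(w)}$ together with the Schwarz reflection principle, so as to pin the continuation down to $G_{\mu}$ and produce honest real boundary values (equivalently, finite moments of the right growth). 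I expect the bulk of the argument to be spent there.
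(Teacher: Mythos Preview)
The paper does not prove this theorem; it is stated in the introduction as a classical result, so there is no argument in the paper to compare yours against. The question is simply whether your proof is correct.

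Your first two paragraphs---extracting $g=G_\mu$ with $\mu$ a probability measure from the Herglotz--Nevanlinna representation together with the normalization $(iy)g(iy)\to 1$---are correct and standard.

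The difficulty you isolate in the third paragraph is genuine, and in fact it cannot be closed: the statement as literally written is false. Take $\mu$ to be the standard Cauchy distribution. Then $g(z)=G_\mu(z)=1/(z+i)$ on $\mathbb{C}^+$, so $h(z)=g(1/z)=z/(1+iz)$ on $\mathbb{C}^-$, and this very formula furnishes an analytic continuation of $h$ to the whole unit disc; moreover $(iy)g(iy)=y/(y+1)\to 1$. All hypotheses hold, yet $\mu$ is not compactly supported. Your proposed Schwarz-reflection repair fails here for exactly the reason you anticipated: the continuation $H(z)=z/(1+iz)$ is not real on $(-1,1)\setminus\{0\}$, so reflection produces a \emph{different} function $\overline{H(\bar z)}=z/(1-iz)$ on the upper half-disc, and nothing in the hypotheses forces $H$ to coincide with it. There is no way to ``pin the continuation down to $G_\mu$'' from the stated assumptions alone.

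What the classical result actually needs---and what standard references supply, implicitly or explicitly---is a reality condition: either that the extension of $h$ satisfy $h(\bar z)=\overline{h(z)}$ (equivalently, that its Taylor coefficients at $0$ be real), or that $g$ extend analytically across the real rays $\{|x|>R\}$ with $g(\bar z)=\overline{g(z)}$ there. With either hypothesis added, your moment-growth argument in the third paragraph goes through cleanly. Without it, the ``real-analytic density decaying like $t^{-2}$'' scenario you mention is not a technicality to be dispatched but an honest counterexample.
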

We define the \textit{F-transform} of this measure as $$F_{\mu}(z) := \frac{1}{G_{\mu}(z)}.$$
The following theorem, due to Nevanlinna, provides both a classification and representation of the class of functions that arise as the reciprocal of a Cauchy transform.
\begin{theorem}\label{nevanlinna1}
Let $F : \mathbb{C}^{+} \rightarrow \mathbb{C}^{+}$ denote an analytic function.  The following are equivalent:
\begin{enumerate}
\item We have that $$H(z) := \frac{1}{F(1/z)}$$ extends to a neighborhood of $0$.  Moreover, $$\lim_{y\uparrow \infty} \frac{F(iy)}{iy} = 1. $$
\item  There exists an $\alpha \in \mathbb{R}$ and a compactly supported, positive  Borel measure $\rho$ on $\mathbb{R}$ such that $$ F(z) = \alpha + z + \int_{\mathbb{R}} \frac{1 + tz}{t - z}d\rho(t) $$
\item There exists a compactly supported probability measure $\mu$ on $\mathbb{R}$ such that $F = F_{\mu}$ on $\mathbb{C}^{+}.$
\end{enumerate}
\end{theorem}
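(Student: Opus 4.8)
The plan is to establish the cycle $(1)\Rightarrow(3)\Rightarrow(2)\Rightarrow(1)$, together with the easy extra arrow $(3)\Rightarrow(1)$, with Theorem~\ref{scalar} supplying the essential input. For $(1)\Leftrightarrow(3)$, apply Theorem~\ref{scalar} and its elementary converse to $g:=1/F$. Since $F$ maps $\mathbb{C}^{+}$ into $\mathbb{C}^{+}$ and $w\mapsto 1/w$ carries $\mathbb{C}^{+}$ onto $\mathbb{C}^{-}$, $g$ is an analytic map $\mathbb{C}^{+}\to\mathbb{C}^{-}$; moreover $g(1/z)=1/F(1/z)=H(z)$ and $(iy)g(iy)=\big(F(iy)/(iy)\big)^{-1}$, so $g$ satisfies the hypotheses of Theorem~\ref{scalar} precisely when $F$ satisfies $(1)$. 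In that case Theorem~\ref{scalar} produces a compactly supported probability measure $\mu$ with $g=G_{\mu}$, i.e. $F=1/G_{\mu}=F_{\mu}$, which is $(1)\Rightarrow(3)$. Conversely, if $F=F_{\mu}$ with $\mathrm{supp}\,\mu$ bounded, then $H(z)=G_{\mu}(1/z)=\int z(1-tz)^{-1}\,d\mu(t)$ is a convergent power series near $0$ and $iy\,G_{\mu}(iy)\to\int 1\,d\mu=1$ by dominated convergence, giving $(3)\Rightarrow(1)$.

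For $(3)\Rightarrow(2)$ the key analytic fact is that $F_{\mu}(z)-z$ is a Pick function, i.e. $\mathrm{Im}\,F_{\mu}(z)\ge \mathrm{Im}\,z$ on $\mathbb{C}^{+}$. This follows from Cauchy--Schwarz: with $v=\mathrm{Im}\,z$ one has $-\mathrm{Im}\,G_{\mu}(z)=v\int|z-t|^{-2}\,d\mu(t)$, while $|G_{\mu}(z)|^{2}\le\int|z-t|^{-2}\,d\mu(t)$ because $\mu(\mathbb{R})=1$, so $\mathrm{Im}\,F_{\mu}(z)=-\mathrm{Im}\,G_{\mu}(z)\,|G_{\mu}(z)|^{-2}=v\int|z-t|^{-2}\,d\mu(t)\,|G_{\mu}(z)|^{-2}\ge v$. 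The Herglotz--Nevanlinna representation of the Pick function $F_{\mu}(z)-z$ then yields $F_{\mu}(z)=a+bz+\int(1+tz)(t-z)^{-1}\,d\nu(t)$ with $a\in\mathbb{R}$, $b\ge 0$ and $\nu$ a finite positive measure. If $\mathrm{supp}\,\mu\subseteq[-r,r]$, then $G_{\mu}$ continues analytically to $\mathbb{C}\setminus[-r,r]$ and is real and nonvanishing on $\mathbb{R}\setminus[-r,r]$ (for $|x|>r$ the function $t\mapsto 1/(x-t)$ has constant sign on $[-r,r]$), so $F_{\mu}$ does too; hence $\mathrm{Im}\,F_{\mu}(x+iy)\to 0$ locally uniformly on $\mathbb{R}\setminus[-r,r]$, whence the same holds for $\int y(1+t^{2})\big((t-x)^{2}+y^{2}\big)^{-1}\,d\nu(t)=\mathrm{Im}\,F_{\mu}(x+iy)-by$, and since these kernels converge weakly to $(1+t^{2})\,d\nu(t)$ we obtain $\mathrm{supp}\,\nu\subseteq[-r,r]$. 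With $\nu$ now compactly supported, $F_{\mu}(iy)/(iy)=a/(iy)+b+(iy)^{-1}\int(1+tiy)(t-iy)^{-1}\,d\nu(t)\to b$, so by $(3)\Rightarrow(1)$ we get $b=1$, and $(2)$ holds with $\alpha=a$, $\rho=\nu$. (One may instead avoid the general Herglotz--Nevanlinna theorem by applying Theorem~\ref{scalar} to $\big(z-F_{\mu}(z)-m_{1}\big)/\mathrm{Var}(\mu)$, with $m_{1}=\int t\,d\mu$; the Cauchy--Schwarz estimate and the expansion of $F_\mu$ at $\infty$ show this maps $\mathbb{C}^{+}\to\mathbb{C}^{-}$ with the right normalisation at $0$ and at $\infty$, the degenerate case $\mathrm{Var}(\mu)=0$, i.e. $\mu=\delta_{a}$ and $F_{\mu}(z)=z-a$, being immediate, and the substitution $d\sigma(t)=(1+t^{2})\,d\rho(t)$ converts the resulting Cauchy transform into the stated integral.)

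For $(2)\Rightarrow(1)$, suppose $F(z)=\alpha+z+\int(1+tz)(t-z)^{-1}\,d\rho(t)$ with $\mathrm{supp}\,\rho\subseteq[-r,r]$. Using $\mathrm{Im}\,(1+tz)(t-z)^{-1}=y(1+t^{2})\big((t-x)^{2}+y^{2}\big)^{-1}$ at $z=x+iy$ and $\rho\ge 0$, one gets $\mathrm{Im}\,F(z)\ge y>0$, so $F:\mathbb{C}^{+}\to\mathbb{C}^{+}$. Since $F$ extends analytically to $\mathbb{C}\setminus[-r,r]$, the function $z\,F(1/z)=\alpha z+1+z\int(z+t)(tz-1)^{-1}\,d\rho(t)$ extends analytically to $\{|z|<1/r\}$ with value $1$ at $z=0$, so $H(z)=z/\big(z\,F(1/z)\big)$ is analytic near $0$; likewise $F(iy)/(iy)=\alpha/(iy)+1+(iy)^{-1}\int(1+tiy)(t-iy)^{-1}\,d\rho(t)\to 1$, the integral being bounded uniformly in $y\ge 1$. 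Thus $F$ satisfies $(1)$, and $(1)\Rightarrow(3)$ closes the cycle.

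There is no single hard step: the argument rests on having Theorem~\ref{scalar} available and on the Pick property of $F_{\mu}(z)-z$. The points needing care are (i) ensuring the auxiliary functions take values in the \emph{open} lower half-plane so that Theorem~\ref{scalar} applies, which uses the asymptotics to exclude constants together with the open mapping theorem; (ii) locating $\mathrm{supp}\,\nu$ from the analytic continuation of $F_{\mu}$ across $\mathbb{R}\setminus[-r,r]$ via Stieltjes inversion; and (iii) the bookkeeping relating $\rho$, the finite measure $(1+t^{2})\rho$, and the probability measure to which Theorem~\ref{scalar} is ultimately applied, including the degenerate point-mass case.
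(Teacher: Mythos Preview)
The paper does not actually supply a proof of Theorem~\ref{nevanlinna1}: it is stated in the introduction as a classical result due to Nevanlinna, serving as motivation for the operator-valued analogue (Corollary~\ref{nevanlinna}). So there is no proof in the paper to compare against.

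That said, your argument is correct and is organised exactly as one would expect given the tools the paper makes available. The equivalence $(1)\Leftrightarrow(3)$ via Theorem~\ref{scalar} applied to $g=1/F$ is precisely how the paper handles the operator-valued version in Corollary~\ref{nevanlinna} (there invoking Theorem~\ref{Main_theorem} instead). For $(3)\Rightarrow(2)$ the paper, in the operator-valued case, defers to \cite{PV}; your scalar argument via the Pick property $\Im F_{\mu}(z)\ge\Im z$ (Cauchy--Schwarz) together with Herglotz--Nevanlinna and Stieltjes inversion to pin down $\mathrm{supp}\,\nu$ is the standard classical route and is carried out cleanly. Your alternative path, applying Theorem~\ref{scalar} directly to $(z-F_{\mu}(z)-m_{1})/\mathrm{Var}(\mu)$, is a nice self-contained variant that avoids invoking the general Herglotz representation; the bookkeeping with $d\sigma=(1+t^{2})\,d\rho$ and the separate handling of the point-mass case are both right. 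The implication $(2)\Rightarrow(1)$ is a routine check, as you note.

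One small remark: in the $(1)\Rightarrow(3)$ step you implicitly use that $F$ is not a real constant (so that $g=1/F$ lands in the \emph{open} lower half-plane), which you mention only later under point (i); it might read more smoothly to flag this at the outset. Otherwise nothing is missing.
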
  It is the purpose of this paper to prove analogues of these theorems for non-commutative, analytic functions.

Non-commutative probability theory was developed by Voiculescu at the end of the $20$th century in order to study free product phenomenon in operator theory. The theory has developed along similar lines as classical probability theory with a convolution operation, $\boxplus$ , serving as an analogue of classical convolution.  The Cauchy transform is used to develop a linearizing transform  similar to the characteristic function in the classical case.  In particular, we may define the \textit{R-transform}, as a function with domain equal to  a subset of $\mathbb{C}^{-}$ through the equation
$$\mathcal{R}_{\mu}(z) := G_{\mu}^{\langle -1 \rangle} (z) - \frac{1}{z}.$$
This equation satisfies the equality $$ \mathcal{R}_{\mu \boxplus \nu} = \mathcal{R}_{\mu} + \mathcal{R}_{\nu}$$
on a subset of the complex upper half plane where each of these functions is defined.  Thus, in studying the Cauchy transform one may recover the distribution and study this convolution operation, hence its importance as an object of study.

In extending this theory to amalgamated free products, Voiculescu developed operator-valued free probability theory, with the main object of study being the so-called $\mathcal{B}$\textit{-valued distributions} arising from the conditional expectations of C$^{\ast}$-algebras on to C$^{\ast}$-subalgebras (see \cite{V3}).  The function theory associated to the operator-valued case was shown in \cite{V2} to be a particular case of Taylor's non-commutative function theory which was initially developed in \cite{TT1} and \cite{TT2}.  Thus, the relevant transforms in the operator-valued setting are non-commutative functions and it is the primary aim of this paper to prove a version of Theorem \ref{scalar} in this setting.  This result, along with other recent works such as \cite{PV} and \cite{BS2} should provide the basis for a robust function theoretic approach to the study of $\mathcal{B}$-valued distributions.

The main result in this paper is Theorem \ref{Main_theorem} which is a near exact analogue of Theorem \ref{scalar} in this more complicated, operator-valued setting.  The one aspect of this result where the analogy with the scalar-valued case breaks down is that \ref{scalar} only requires that the appropriate asymptotic behavior hold in a \textit{single direction} whereas the operator-valued result does not permit such weakened hypotheses.  This is addressed at length in Example \ref{example}.  These techniques are also brought to bear on the linearizing  transforms and an analogue of Theorem \ref{Main_theorem} is proven for the Voiculescu transforms of infinitely divisible distributions  in Theorem \ref{VT}.  

There are several important corollaries to these results.  First, in Proposition \ref{extension}, we show that a distribution $\mu$ is infinitely divisible if and only if its Voiculescu transform extends to $H^{+}(\mathcal{B})$.  This is in exact analogy with the scalar-valued case (see \cite{BV}, section 5).
Second, in Corollary \ref{semigroups} we show, using purely analytic techniques, that to each infinitely divisible distribution, one may associate a semigroup of divisors indexed by the completely positive self-maps of the algebra $\mathcal{B}$.  This was 
shown in \cite{ABFN} and \cite{Dima2} through Fock space constructions and we recover this result through function theoretic methods.  Lastly, in Corollary \ref{nevanlinna}, we are able to show that the family of non-commutative functions that arise as the $F$-transforms of $\mathcal{B}$-valued distributions may also be characterized by their analytic and asymptotic properties.  This result, combined with \ref{VT} and the main result in \cite{PV}, allows us to prove Corollary \ref{nevanlinna} which is  a direct analogue of \ref{nevanlinna1} (although we note here that the full scope of Nevanlinna's result extends to measures which are not necessarily compactly supported).  As this representation is invaluable in the study of scalar-valued free probability,  \ref{nevanlinna} should be an important tool in the continuing development of operator-valued free probability.

This paper is organized as follows.  In section \ref{preliminaries}, the preliminary material associated with vector valued analytic functions (\ref{analytic}), operator-valued free probability (\ref{Op_Prelim}), non-commutative functions (\ref{NC_prelim}) and Cauchy transforms (\ref{transforms}) is presented.   In section \ref{main_result}, we prove our main result, the classification of the Cauchy transforms in this non-commutative setting.  In section \ref{transform_result}, we prove an analogous characterization of the linearizing transforms associated to $\mathcal{B}$-valued distributions.  In \ref{consequences}, we derive some of the many consequences of this result, including Nevanlinna type representations for certain classes of non-commutative functions and defining semigroups of completely positive maps associated to each infinitely divisible distribution.

\textit{Acknowledgements:}  The author would like to thank Michael Anshelevich for posing this question and making himself available for extended discussions and Hari Bercovici for providing excellent ideas and advice.  The final portions of this project were completed during the Fields Institute's focus program on non-commutative probability theory and the author would like to thank the staff for providing an excellent environment in which to work.  We would also like to thank the referee for a thorough review and helpful suggestions.

\section{Preliminaries}\label{preliminaries}
\subsection{Vector Valued Analytic Functions}\label{analytic}

We shall refer to \cite{Hille}, sections $3$ and $26$ and the references therein as a blanket reference for this material.
We single out the references \cite{zorn1}, \cite{zorn2} and \cite{zorn3} as especially applicable to this section although many of these results may be considered classical in nature and free from specific reference.
We also note that many of these results hold in a greater level of generality but are dampened for clarity as our requirements are more modest.

Let $\mathcal{X}$ and $\mathcal{D}$ denote complex Banach spaces and $\mathcal{U} \subset \mathcal{X}$ an open set.  Consider a function
$f: \mathcal{U} \mapsto \mathcal{D}$.

\begin{definition}
The function $f$ is said to be G\^{a}teaux differentiable if for every $x \in \mathcal{U}$ and $h \in \mathcal{X}$,
the quotient $$\frac{f(x + \zeta h) - f(x)}{\zeta} , $$ which is defined for $\zeta \in \mathbb{C}$ small enough, tends to a unique limit as  $\zeta \rightarrow 0$.
In symbols, this limit shall be referred to as $\delta f(x;h) = \delta^{h}_{x} f$, the first variation of $f(x)$ with increment $h$.
\end{definition}
In analogy with complex analytic functions, G\^{a}teaux differentiable functions on an open set $\mathcal{U}$ have derivatives of all orders.
Further, the $n$th variation of $f(x)$ with increment $h$ (in symbols $\delta^{n}f(x;h)$) is a homogeneous function of degree $n$ in $h$.  That is, given $\alpha \in \mathbb{C}$,
we have that $\delta^{n}f(x;\alpha h) = \alpha^{n}\delta^{n}f(x;h)$.  In particular, the first derivative is a linear function in $h$.

\begin{definition}
The function $f$ is said to be Fr\'echet differentiable if $\delta f(x;h)$ is a bounded, homogeneous function of degree one in $h$
and $$ \lim_{\| h \| \rightarrow 0} \frac{1}{\|h\|} \|f(x+h) - f(x) - \delta f(x;h)  \| = 0 $$
for all $x \in \mathcal{U}$.
\end{definition}

It is implicit in the definition that Fr\'echet differentiable functions are in fact G\^{a}teaux differentiable.  In fact, very robust converses exist as we shall see
later in the section.

\begin{definition}
The function $f$ is said to be locally bounded if for every $a \in \mathcal{U}$,
there exists an $r_{a} > 0$ and a finite $M(a) > 0$ such that
$\| x - a \| < r_{a}$ implies that $\|f(x) \| < M(a)$.
\end{definition}

\begin{definition}
The function $f$ is said to be analytic if it is locally bounded and G\^{a}teaux differentiable in $\mathcal{U}$.
\end{definition}

The following theorem justifies this
definition insofar as it shows that analytic functions are precisely the limits of power series.  It also provides a converse for the varying strengths of differentiability.

\begin{theorem}\label{analytic_theorem}
Assume that $f$ is analytic in $\mathcal{U}$, then it is continuous and Fr\'echet differentiable in $\mathcal{U}$.
For $x \in \mathcal{U}$, $\delta^{n} f(x;h)$ is a bounded homogeneous function of degree $n$ in $h$
and a locally bounded function of $x$.  To each $a \in \mathcal{U}$ there exists a $t_{a} > 0$
such that the Taylor expansion
$$ f(x+h) = \sum_{n=0}^{\infty} \frac{1}{n!} \delta^{n}f(x;h) $$
converges uniformly for $\| x - a \| < t_{a}$ and $\| h \| < t_{a}$
\end{theorem}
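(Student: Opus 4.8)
\textsc{Proof proposal.} The plan is to reduce everything to the classical one–variable Cauchy theory by restricting $f$ to complex lines. Fix $a\in\mathcal{U}$ and choose $r_a>0$, $M(a)<\infty$ as in the definition of local boundedness, so $\|f(x)\|<M(a)$ whenever $\|x-a\|<r_a$. For a unit vector $h\in\mathcal{X}$ and any $x$ with $\|x-a\|<r_a$, consider the $\mathcal{D}$–valued function $\varphi_{x,h}(\zeta):=f(x+\zeta h)$, which is defined and bounded in norm by $M(a)$ on the disc $\{\zeta:|\zeta|<r_a-\|x-a\|\}$, since $\|x+\zeta h-a\|\le\|x-a\|+|\zeta|<r_a$ there. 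Because $f$ is G\^{a}teaux differentiable throughout $\mathcal{U}$, $\varphi_{x,h}$ has a complex derivative $\varphi_{x,h}'(\zeta)=\delta f(x+\zeta h;h)$ at every point of this disc; iterating, and using the existence of the higher variations recorded before the statement, $\varphi_{x,h}^{(n)}(\zeta)=\delta^{n}f(x+\zeta h;h)$. Thus $\varphi_{x,h}$ is a bounded, holomorphic Banach–space–valued function of one variable, with $\delta^{n}f(x;h)=\varphi_{x,h}^{(n)}(0)$.

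Next I would apply the Cauchy integral formula and the Cauchy estimates to $\varphi_{x,h}$ (valid for $\mathcal{D}$–valued holomorphic functions, e.g.\ by composing with functionals in $\mathcal{D}^{\ast}$ and invoking Hahn--Banach, or directly as in \cite{Hille}). On a circle $|\zeta|=\rho$ with $\rho<r_a-\|x-a\|$ this gives $\|\delta^{n}f(x;h)\|=\|\varphi_{x,h}^{(n)}(0)\|\le n!\,M(a)\,\rho^{-n}$ for unit $h$, and hence, by homogeneity of degree $n$ of $\delta^{n}f(x;\cdot)$, the bound $\|\delta^{n}f(x;h)\|\le n!\,M(a)\,\rho^{-n}\|h\|^{n}$ for all $h$. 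Fixing $\rho=r_a-t_a$ for a $t_a>0$ to be chosen and letting $x$ range over $\|x-a\|<t_a$ shows simultaneously that $\delta^{n}f(x;h)$ is a bounded homogeneous function of degree $n$ in $h$ and a locally bounded function of $x$.

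For the Taylor expansion, put $t_a:=r_a/3$. If $\|x-a\|<t_a$ and $\|h\|<t_a$, then $\zeta\mapsto f(x+\zeta h)$ is holomorphic and bounded by $M(a)$ on $|\zeta|<(r_a-\|x-a\|)/\|h\|>2$, so its one–variable Taylor series $\sum_{n\ge0}\tfrac{\zeta^{n}}{n!}\delta^{n}f(x;h)$ converges there; evaluating at $\zeta=1$ yields $f(x+h)=\sum_{n\ge0}\tfrac{1}{n!}\delta^{n}f(x;h)$. With $\rho=r_a-t_a$ the estimate above gives $\|\tfrac{1}{n!}\delta^{n}f(x;h)\|\le M(a)(\|h\|/(r_a-t_a))^{n}\le M(a)2^{-n}$, a geometric majorant independent of $x,h$ in this region, which is exactly the uniform convergence asserted. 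Finally, from $f(x+h)-f(x)-\delta f(x;h)=\sum_{n\ge2}\tfrac{1}{n!}\delta^{n}f(x;h)$ and the same bound one gets $\|f(x+h)-f(x)-\delta f(x;h)\|\le C(a)\|h\|^{2}$ for $\|h\|<t_a$; since $h\mapsto\delta f(x;h)$ is bounded and linear, this says $f$ is Fr\'echet differentiable at $x$ (hence continuous there), completing the proof.

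I expect the only genuine subtleties to be: (i) justifying that $\varphi_{x,h}$ is holomorphic as a $\mathcal{D}$–valued function and that its $n$th derivative at $0$ coincides with $\delta^{n}f(x;h)$, which rests on the facts about iterated variations stated before the theorem together with the routine transfer of the Cauchy integral theory to Banach–space–valued functions; and (ii) the bookkeeping of radii needed so that a single $t_a$ works at once for the uniform convergence of the Taylor series and for the $O(\|h\|^{2})$ remainder estimate. Neither is deep — the entire conceptual content is the one–variable Cauchy estimate applied along complex lines — so the main effort is organizational rather than substantive.
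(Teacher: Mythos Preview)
The paper does not supply its own proof of this theorem: it is stated as a classical fact drawn from \cite{Hille} (sections~3 and~26, with Zorn's papers \cite{zorn1,zorn2,zorn3} singled out), so there is no in-paper argument to compare against. Your proposal is the standard proof one finds in those references---reduce to one complex variable along lines $\zeta\mapsto f(x+\zeta h)$, apply the vector-valued Cauchy integral formula and estimates, and read off boundedness of the variations, the Taylor expansion with a geometric majorant, and the $O(\|h\|^2)$ remainder that gives Fr\'echet differentiability---and it is correct as sketched.
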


 Theorem 3.16.3 in \cite{Hille} and is a useful analogue of the classical
Cauchy estimates in complex analysis.
\begin{theorem}\label{Cauchy}
Let $f$ be G\^{a}teaux differentiable in $\mathcal{U}$ and assume that $\|f(x) \| \leq M$ for $x \in \mathcal{U}$.
Then $$\| \delta^{n}f(a;h) \| \leq Mn!$$
for $a + h \in \mathcal{U}$.
\end{theorem}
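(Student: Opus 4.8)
The strategy is to reduce to the classical one-variable Cauchy estimate. Fix $a\in\mathcal{U}$ and an increment $h$; since $\|f\|\le M$ makes $f$ locally bounded, $f$ is analytic in the sense defined above, and the set $D:=\{\zeta\in\mathbb{C}:a+\zeta h\in\mathcal{U}\}$ is open (being the preimage of $\mathcal{U}$ under the continuous affine map $\zeta\mapsto a+\zeta h$) and contains $0$. I read the hypothesis ``$a+h\in\mathcal{U}$'' as asserting that the image $\{a+\zeta h:|\zeta|\le 1\}$ of the closed unit disk lies in $\mathcal{U}$, equivalently that $\{\zeta\in\mathbb{C}:|\zeta|\le 1\}\subseteq D$; this is automatic, for instance, whenever $\mathcal{U}$ contains the ball of radius $\|h\|$ centered at $a$. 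For a bounded linear functional $\phi$ on $\mathcal{D}$ with $\|\phi\|\le 1$, put $v_{\phi}(\zeta):=\phi\bigl(f(a+\zeta h)\bigr)$, a scalar function on $D$ with $|v_{\phi}|\le M$. Since $\phi$ is linear and continuous it passes through the difference quotients of $f$, so the existence of $\delta f$ throughout $\mathcal{U}$ forces $v_{\phi}$ to be complex-differentiable at every point of $D$, hence holomorphic there.

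Next I would match the Taylor coefficients of $v_{\phi}$ at the origin with the variations of $f$. Writing $w:=\phi\circ f$ and using only the continuity of $\phi$ together with the fact (recorded above) that G\^{a}teaux differentiable functions have variations of every order, a short induction on $n$ shows that $\delta^{n}w(a;h)=\phi\bigl(\delta^{n}f(a;h)\bigr)$ and that, after restricting $w$ to the complex line through $a$ in direction $h$, one has $v_{\phi}^{(n)}(0)=\delta^{n}w(a;h)$; combining these gives $v_{\phi}^{(n)}(0)=\phi\bigl(\delta^{n}f(a;h)\bigr)$. (The homogeneity $\delta^{n}f(a;\alpha h)=\alpha^{n}\delta^{n}f(a;h)$ noted above is exactly what makes $\sum_{n}\tfrac{\zeta^{n}}{n!}\phi\bigl(\delta^{n}f(a;h)\bigr)$ the genuine power series of $v_{\phi}$, consistent with this identification.) Applying the classical Cauchy estimate to the scalar holomorphic function $v_{\phi}$ on the disk of radius $1$ yields $|v_{\phi}^{(n)}(0)|\le M\,n!$, uniformly over all $\phi$ with $\|\phi\|\le 1$; taking the supremum over such $\phi$ and invoking Hahn--Banach gives $\|\delta^{n}f(a;h)\|=\sup_{\|\phi\|\le 1}\bigl|\phi\bigl(\delta^{n}f(a;h)\bigr)\bigr|\le M\,n!$, as claimed.

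Essentially nothing here is hard analysis; the points requiring care are bookkeeping. First, one must be scrupulous that $\delta^{n}f(a;h)$ — defined as an $n$-fold iterated first variation in the \emph{fixed} direction $h$ — really equals $n!$ times the $n$-th Taylor coefficient of the one-variable restriction $\zeta\mapsto f(a+\zeta h)$, and that a bounded functional commutes with this iterated limiting process; this is the step I expect to be the most delicate, though it is a routine induction once the definitions are unwound. Second, one should pin down the sense in which ``$a+h\in\mathcal{U}$'' makes the full closed unit disk available, since the estimate can only be run at radius $r$ when $\{\zeta:|\zeta|\le r\}\subseteq D$; for a general open $\mathcal{U}$ the same argument yields the scaled bound $\|\delta^{n}f(a;h)\|\le M\,n!/r^{n}$ for every such $r$, and the statement as written is the normalized case $r=1$. (Alternatively one can bypass functionals entirely: $\zeta\mapsto f(a+\zeta h)$ is a bounded, $\mathcal{D}$-valued analytic function on a neighborhood of $\{\zeta:|\zeta|\le 1\}$, so the vector-valued Cauchy integral formula gives $\|\delta^{n}f(a;h)\|\le\tfrac{n!}{2\pi}\cdot 2\pi M=M\,n!$ directly.)
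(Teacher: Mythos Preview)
The paper does not supply its own proof of this statement: it is quoted verbatim as Theorem~3.16.3 of Hille's book and is used as a black box thereafter. Your argument---restrict to the complex line $\zeta\mapsto f(a+\zeta h)$, pass to scalars via a norming functional, apply the classical Cauchy estimate on the unit disk, and recover the vector norm by Hahn--Banach---is the standard proof of this result (and is essentially how Hille does it, or equivalently via the vector-valued Cauchy integral you mention at the end). There is nothing to compare against in the paper itself.

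Your one caveat is well placed: the hypothesis ``$a+h\in\mathcal{U}$'' as literally written only puts the single point $\zeta=1$ in $D$, whereas the estimate needs the entire closed disk $\{|\zeta|\le 1\}\subseteq D$. This is a standing convention in Hille's treatment (the estimate is really $\|\delta^{n}f(a;h)\|\le Mn!/r^{n}$ whenever $\{a+\zeta h:|\zeta|\le r\}\subset\mathcal{U}$, specialized to $r=1$), and you have identified and handled it correctly. No gap in your argument.
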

Further, theorem 3.17.17 in \cite{Hille} provides Lipschitz estimates for analytic functions.
Indeed, for an analytic function $f$ that is locally bounded by $M(a)$ in a neighborhood of radius $r_{a}$, we have that
\begin{equation}\label{lipschitz}
\| f(y) - f(x) \| \leq \frac{2M(a) \| x - y \|}{r_{a} - 2 \| x - y \|}
\end{equation}
These will come prove useful as we will utilize the Kantorovich theorem repeatedly and its invokation relies on such estimates.
We note that this is a very powerful theorem with detailed estimates on the convergence of Newton method approximation.
Our requirements are very modest so we will only state an abbreviated version of Kantorovich's results.
Throughout, we let $\overline{B_{r}(x)}$ denote the closed ball of radius $r$ about $x$ in a Banach space $X$.

\begin{theorem}\cite{Kant2}\label{kantorovich}
Let $F: D \subset X \rightarrow Y$ denote an analytic function where $X$ and $Y$ are assumed to be Banach spaces.
Let $D_{0} \subset D$ denote an open convex set.
Assume that $F'(x_{0}) := \delta F(x_{0}, \cdot)$ is invertible as an operator and the following conditions are satisfied:
\begin{enumerate}
\item\label{kant1} $\| F'(x_{0})^{-1} (F'(x) - F'(y) ) \| \leq K \| x - y \| $ for all $x,y \in D_{0}$.
\item\label{kant2}  For $\eta = \| F'(x_{0})^{-1}  \cdot F(x_{0}) \|$ , we have $ h = K\eta \leq 1/2 $.
\end{enumerate}
Let $$ t^{\ast} = \frac{2\eta}{1 + \sqrt{1 - 2h}} ; \ t^{\ast \ast} = \frac{1 + \sqrt{1 - 2h}}{K} ; \ x_{1} = x_{0} - F'(x_{0})^{-1} \cdot F(x_{0})$$
and assume that  $ B_{t^{\ast}}(x_{1}) \subset D_{0}$ .  Then $F$ has a  root  $ x^{\ast} \in B_{t^{\ast}}(x_{1})$ and this root is unique in the set $B_{t^{\ast \ast}}(x_{1})$.
\end{theorem}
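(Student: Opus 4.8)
The plan is to run the Newton iteration $x_{n+1} = x_{n} - F'(x_{n})^{-1}F(x_{n})$, starting from $x_{0}$, and to dominate it termwise by the Newton iteration of a scalar quadratic majorant. First I would pass to the normalized map $G := F'(x_{0})^{-1}\circ F$, which is still analytic on $D$, has $G'(x_{0}) = I$, satisfies $\|G'(x) - G'(y)\| \leq K\|x - y\|$ on $D_{0}$ by hypothesis~\eqref{kant1}, and has $\|G(x_{0})\| = \eta$; a point is a root of $F$ iff it is a root of $G$, and the Newton iterates of $F$ and $G$ agree. Then I would introduce $p(t) := \tfrac{K}{2}t^{2} - t + \eta$, whose two roots are precisely $t^{\ast} = \tfrac{2\eta}{1+\sqrt{1-2h}}$ and $t^{\ast\ast} = \tfrac{1+\sqrt{1-2h}}{K}$ (condition~\eqref{kant2}, $h = K\eta \leq 1/2$, makes the discriminant nonnegative and $t^{\ast} \leq t^{\ast\ast}$, and note $t^{\ast} \leq 1/K$), and record that the scalar Newton sequence $t_{0} = 0$, $t_{n+1} = t_{n} - p(t_{n})/p'(t_{n})$ is increasing with $t_{1} = \eta$ and $t_{n} \uparrow t^{\ast}$.

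The core is an induction proving, for all $n$, that $x_{n}$ is well defined with $F'(x_{n})$ invertible, that $\|x_{n} - x_{0}\| \leq t_{n}$, and that $\|x_{n+1} - x_{n}\| \leq t_{n+1} - t_{n}$. For invertibility I would write $G'(x_{n}) = I - (G'(x_{0}) - G'(x_{n}))$ and bound $\|G'(x_{0}) - G'(x_{n})\| \leq K\|x_{n} - x_{0}\| \leq Kt_{n} < 1$, so the Neumann series gives $\|G'(x_{n})^{-1}\| \leq (1 - Kt_{n})^{-1} = -1/p'(t_{n})$. For the step bound I would invoke Theorem~\ref{analytic_theorem} (the vector-valued Taylor expansion, equivalently the integral mean-value inequality, legitimate because $D_{0}$ is convex and the segments $[x_{n}, x_{n+1}]$ one checks to lie in $B_{t^{\ast}}(x_{1}) \subset D_{0}$): since $x_{n+1} - x_{n} = -G'(x_{n})^{-1}G(x_{n})$, we obtain $\|G(x_{n+1})\| = \|G(x_{n+1}) - G(x_{n}) - G'(x_{n})(x_{n+1} - x_{n})\| \leq \tfrac{K}{2}\|x_{n+1} - x_{n}\|^{2}$, whence $\|x_{n+2} - x_{n+1}\| \leq \|G'(x_{n+1})^{-1}\|\,\|G(x_{n+1})\| \leq (-1/p'(t_{n+1}))\tfrac{K}{2}\|x_{n+1} - x_{n}\|^{2}$. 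Comparing with the identical scalar recursion $t_{n+2} - t_{n+1} = (-1/p'(t_{n+1}))\tfrac{K}{2}(t_{n+1} - t_{n})^{2}$ propagates the step bound, and $\|x_{n} - x_{0}\| \leq \sum_{k < n}\|x_{k+1} - x_{k}\| \leq \sum_{k < n}(t_{k+1} - t_{k}) = t_{n}$ closes the induction.

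Passing to the limit, $\{x_{n}\}$ is Cauchy since it is dominated termwise by the convergent scalar sequence $\{t_{n}\}$, so it has a limit $x^{\ast}$ with $\|x^{\ast} - x_{1}\| \leq \sum_{n \geq 1}(t_{n+1} - t_{n}) = t^{\ast} - t_{1}$, putting $x^{\ast}$ in $B_{t^{\ast}}(x_{1})$ (in the closed ball in the degenerate case $h = 1/2$); continuity of $F$, together with $\|G(x_{n})\| \leq \tfrac{K}{2}\|x_{n} - x_{n-1}\|^{2} \to 0$, forces $F(x^{\ast}) = 0$. For uniqueness inside $B_{t^{\ast\ast}}(x_{1})$, I would take any root $y$ there and show by a second induction that $\|y - x_{n}\| \leq s_{n}$, where $s_{0} = \|y - x_{0}\|$ and $\{s_{n}\}$ obeys the analogous majorant recursion coming from the Taylor estimate on $[x_{n}, y]$; since this scalar sequence decreases to $0$ exactly when the starting point stays below the larger root $t^{\ast\ast}$ of $p$, one gets $y = \lim x_{n} = x^{\ast}$.

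The step I expect to be the main obstacle is the bookkeeping in the induction of the second paragraph: one must simultaneously keep $G'(x_{n})$ invertible, maintain the sharp quantitative bound $\|G'(x_{n})^{-1}\| \leq -1/p'(t_{n})$, and carry the step inequality, all while verifying at each stage that the segments entering the Taylor/mean-value estimate actually lie in $D_{0}$; the uniqueness argument, which must bring in the second root $t^{\ast\ast}$ with exactly the right constant, is the other delicate point. No new analytic input beyond Section~\ref{analytic} is needed: Theorems~\ref{analytic_theorem} and~\ref{Cauchy} and the Lipschitz bound~\eqref{lipschitz} furnish precisely the vector-valued calculus these estimates require.
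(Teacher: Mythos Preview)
Your argument is the standard Kantorovich proof via the quadratic majorant $p(t)=\tfrac{K}{2}t^{2}-t+\eta$, and it is correct as outlined; the normalization $G=F'(x_{0})^{-1}\circ F$, the Neumann-series invertibility bound $\|G'(x_{n})^{-1}\|\le -1/p'(t_{n})$, the Taylor-remainder step estimate, and the termwise comparison with the scalar Newton iterates are exactly the classical ingredients, and your identification of the two roots of $p$ with $t^{\ast}$ and $t^{\ast\ast}$ is accurate.

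There is nothing to compare against in the paper: Theorem~\ref{kantorovich} is merely quoted from \cite{Kant2} and is not proved here. The paper only invokes the statement later (in the proof of Theorem~\ref{VT}), using the explicit values of $t^{\ast}$ and $t^{\ast\ast}$ and the fact that the modified Newton iterates stay in $B_{t^{\ast}}(x_{1})$. Your write-up thus supplies a proof the paper deliberately omits; just be aware that the version actually used in Section~\ref{transform_result} is the \emph{modified} Newton method (with the fixed derivative $F'(x_{0})^{-1}$ at every step), so if you want to match the paper's later usage exactly you would run the simpler iteration $x_{k+1}=x_{k}-F'(x_{0})^{-1}F(x_{k})$ rather than the full Newton step, though both variants yield the same conclusion with the same constants.
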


\subsection{Operator-Valued Free Probability}\label{Op_Prelim}

Free probability theory lies at the intersection of operator algebras, operator theory and probability theory.  We refer to \cite{KR} as an introductory text to the study of operator algebras and \cite{Vern} as an introduction to operator theory.

Let $\mathcal{A}$ and $\mathcal{B}$ denote $C^{\ast}$-algebras with $\mathcal{B} \subset \mathcal{A}$.
We call a \textit{conditional expectation} a positive, unital,  linear map $E: \mathcal{A} \mapsto \mathcal{B}$
with the property that $E(bab') = bE(a)b'$ for all $b,b' \in \mathcal{B}$ and $a \in \mathcal{A}$ (this property is called \textit{B-bimodularity}.)
  We shall refer to the triple $(\mathcal{A},E,\mathcal{B})$ as a \textit{$\mathcal{B}$-valued non-commutative probability space}.

Let $X$ denote a self-adjoint variable that is algebraically free from $\mathcal{B}$.  Let $\mathcal{B}\langle X \rangle$ be the $\ast$-algebra
of non-commutative polynomials over $\mathcal{B}$.  That is, the linear span of monomials of the form $b_{1}Xb_{2} \cdots Xb_{n+1}$ with $b_{i} \in \mathcal{B}$ for $i=1,\ldots ,n+1$.
The $\ast$ operation is defined on the monomials by  $b_{1}Xb_{2} \cdots Xb_{n+1} = b_{n+1}^{\ast}X \cdots b_{2}^{\ast}Xb_{1}^{\ast}$ and extended through linearity.

Given a triple  $(\mathcal{A},E,\mathcal{B})$ as above and an element $a\in A$, we define the \textit{$\mathcal{B}$-valued distribution} of $a$ to be the map
$\mu_{a} : \mathcal{B} \langle X\rangle \rightarrow \mathcal{B}$ defined by $\mu_{a}(P(X)) = E(P(a))$ where we abuse notation by letting $P(a)$ refer to the evaluation map on the non-commutative polynomial $P(X) \in \mathcal{B}\langle X \rangle$.

We define an abstract set of distributions $\Sigma$ by considering the set of all unital, positive $\mathcal{B}$-bimodular maps $\mu: \mathcal{B} \langle X \rangle \mapsto \mathcal{B}$ 
with the property that, for all $n \in \mathbb{N},$
\begin{equation}\label{CP}
 [\mu(P_{i}^{\ast}(X)P_{j}(X))]_{i,j=1}^{n} \geq 0
\end{equation}
in $M_{n}(\mathcal{B})$ for any family of elements $P_{i}(X) \in \mathcal{B} \langle X\rangle$ with $i=1,\ldots,n$.
We consider a subset $\Sigma_{0} \subset \Sigma$  by introducing the additional property that there exists $M > 0$
such that 
\begin{equation}\label{bound}
\| \mu(b_{1}Xb_{2} \cdots Xb_{n+1}) \| \leq \| b_{1} \| \cdots \|b_{n+1} \| M^{n}
\end{equation}
for all $b_{i} \in \mathcal{B}$ with $i= 1,\ldots , n$.

We say that a distribution $\mu \in \Sigma_{0}$  is $\boxplus$-\textit{infinitely divisible} if, for every $n \in \mathbb{N}$, there exists a distribution $\mu_{n} \in \Sigma_{0}$ such that
$$\mu = \underbrace{\mu_{n} \boxplus \mu_{n} \boxplus \cdots \boxplus \mu_{n}}_{n \ times}.$$
It was shown in \cite{Speichop}, Proposition 4.5.3,    that to each such distribution one may associate a semigroup $\{ \mu_{t} \}_{t \in \mathbb{R}}$ such that $\mu_{1} = \mu$ and
$\mu_{t} \boxplus \mu_{s} = \mu_{t + s}$.  Moreover, in \cite{ABFN} and \cite{Dima2}, this result is extended to semigroups indexed by completely positive self maps of $\mathcal{B}$.  In Corollary \ref{semigroups} we reprove this result by with an analytic methodology,  with the main tool being Theorem \ref{VT} and its 
analytic characterization of these distributions.

The following theorem makes clear why we emphasize the set $\Sigma_{0}$ and was proven in Proposition 2.2 of \cite{PV} (with a tracial variation of this characterization being proven in \cite{Wil3}).
\begin{theorem}\label{PV}
Let $\mu \in \Sigma$.
  Then $\mu \in \Sigma_{0}$ if an only if there exists a $\mathcal{B}$-valued non-commutative probability space $(\mathcal{A},E,\mathcal{B})$
and an element $a \in \mathcal{A}$ such that $\mu = \mu_{a}$.
\end{theorem}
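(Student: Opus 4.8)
The plan is to treat the two implications separately. The ``if'' direction is routine: if $\mu = \mu_a$ with $a = a^\ast$ in a $\mathcal{B}$-valued probability space $(\mathcal{A},E,\mathcal{B})$, then since a positive unital map between $C^\ast$-algebras is contractive, $\|\mu(b_1 X b_2 \cdots X b_{n+1})\| = \|E(b_1 a b_2 \cdots a b_{n+1})\| \le \|b_1\|\cdots\|b_{n+1}\|\,\|a\|^{n}$, so $\mu$ satisfies \eqref{bound} with $M = \|a\|$ and lies in $\Sigma_0$. The content is the converse, which I would prove by a GNS/Stinespring-type construction over $\mathcal{B}$.

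For the converse I would first make $\mathcal{B}\langle X\rangle$ into a semi-pre-Hilbert $\mathcal{B}$-module by the formula $\langle P, Q\rangle := \mu(P^\ast Q) \in \mathcal{B}$: condition \eqref{CP} is precisely the statement that this $\mathcal{B}$-valued form is completely positive, which gives both $\langle P, P\rangle \ge 0$ and the module Cauchy--Schwarz inequality $\langle P,Q\rangle^\ast \langle P, Q\rangle \le \|\langle P, P\rangle\|\, \langle Q, Q\rangle$. Quotienting by the right-submodule kernel $N = \{P : \langle P, P\rangle = 0\}$ and completing produces a Hilbert $C^\ast$-module $\mathcal{M}$ over $\mathcal{B}$. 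Since $\mu$ is unital and $\mathcal{B}$-bimodular one has $\mu|_{\mathcal{B}} = \mathrm{id}_{\mathcal{B}}$, so the class $\xi$ of $1 \in \mathcal{B}\langle X\rangle$ satisfies $\langle \xi b, \xi b'\rangle = b^\ast b'$; thus $b \mapsto \xi b$ isometrically embeds $\mathcal{B}$ into $\mathcal{M}$ and $\xi$ is a cyclic unit vector. Left multiplication by elements of $\mathcal{B}$ and by $X$ are right-$\mathcal{B}$-linear, preserve $N$, and are symmetric for $\langle\cdot,\cdot\rangle$ (for $X$ because $X = X^\ast$); hence $b \mapsto \lambda_b$ is a $\ast$-homomorphism of $\mathcal{B}$ into the adjointable operators $\mathcal{L}(\mathcal{M})$, isometric because it is isometric on $\xi\mathcal{B}$.

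The hard part is to show that the left multiplication $\lambda_X$ is bounded on $\mathcal{M}$ — this is exactly the step that uses the growth bound \eqref{bound} and is the reason $\Sigma_0$, not $\Sigma$, is the right class. I would argue as follows. For a monomial $P = b_1 X b_2 \cdots X b_{m+1}$, the element $(X^k P)^\ast (X^k P) = P^\ast X^{2k} P$ is again a single monomial whose coefficients are the $b_i$, the $b_i^\ast$, and $2k-1$ copies of $1$; so \eqref{bound} gives $\|\langle X^k P, X^k P\rangle\| = \|\mu(P^\ast X^{2k} P)\| \le C_P M^{2k}$ with $C_P$ independent of $k$, and by linearity $\|\lambda_X^{\,k}\eta\|_{\mathcal{M}} \le \sqrt{C_\eta}\, M^{k}$ for $\eta$ in the dense image of $\mathcal{B}\langle X\rangle$. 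Because $\lambda_X$ is symmetric, $\langle \lambda_X^{\,j}\eta, \lambda_X^{\,j}\eta\rangle = \langle \eta, \lambda_X^{\,2j}\eta\rangle$ and Cauchy--Schwarz makes $j \mapsto \log\|\lambda_X^{\,j}\eta\|_{\mathcal{M}}$ convex, whence $\|\lambda_X\eta\|_{\mathcal{M}} \le \|\eta\|_{\mathcal{M}}^{1-1/N}\,\|\lambda_X^{\,N}\eta\|_{\mathcal{M}}^{1/N}$ for all $N$; inserting the previous bound and letting $N \to \infty$ gives $\|\lambda_X\eta\|_{\mathcal{M}} \le M\|\eta\|_{\mathcal{M}}$. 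Hence $\lambda_X$ extends to a bounded symmetric — therefore self-adjoint — operator $a := \lambda_X \in \mathcal{L}(\mathcal{M})$.

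To finish, set $\mathcal{A} := \mathcal{L}(\mathcal{M})$ (or the unital $C^\ast$-subalgebra generated by $\{\lambda_b : b \in \mathcal{B}\}$ and $a$), identify $\mathcal{B}$ with $\{\lambda_b : b \in \mathcal{B}\}$, and define $E : \mathcal{A} \to \mathcal{B}$ by $E(T) := \langle \xi, T\xi\rangle$. Then $E$ is unital and positive, and $\mathcal{B}$-bimodular because $\lambda_b \xi = \xi b$ (valid since $b \cdot 1 = 1\cdot b$ in $\mathcal{B}\langle X\rangle$), so $(\mathcal{A},E,\mathcal{B})$ is a $\mathcal{B}$-valued probability space. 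Finally $P(a)\xi = [P(X)]$ for all $P \in \mathcal{B}\langle X\rangle$, so $\mu_a(P(X)) = E(P(a)) = \langle \xi, [P(X)]\rangle = \mu(P(X))$, i.e. $\mu = \mu_a$. The only real obstacle in all of this is the boundedness of $\lambda_X$ above; the rest is the routine bookkeeping of the Hilbert-module GNS machinery.
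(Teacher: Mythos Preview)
The paper does not actually prove this theorem: it is stated with attribution to Proposition~2.2 of \cite{PV} and is invoked as a known result. So there is no ``paper's own proof'' to compare against. Your write-up is, however, exactly the expected GNS/Stinespring construction over a Hilbert $C^\ast$-module that underlies the cited result: form $\langle P,Q\rangle=\mu(P^\ast Q)$, use \eqref{CP} for positivity and Cauchy--Schwarz, complete, and identify left multiplication by $X$ with the sought self-adjoint operator. The one genuinely nontrivial point---boundedness of $\lambda_X$---you handle correctly: the growth condition \eqref{bound} gives $\|\lambda_X^{\,N}\eta\|\le \sqrt{C_\eta}\,M^{N}$ on the dense polynomial vectors, and the log-convexity $\|\lambda_X^{\,j}\eta\|^2\le \|\lambda_X^{\,j-1}\eta\|\,\|\lambda_X^{\,j+1}\eta\|$ (from symmetry of $\lambda_X$ and module Cauchy--Schwarz) lets you interpolate and pass $N\to\infty$ to obtain $\|\lambda_X\eta\|\le M\|\eta\|$. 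The remaining verifications (adjointability and boundedness of $\lambda_b$ via $\|b\|^2-b^\ast b\ge 0$, $\mathcal{B}$-bimodularity of $E(T)=\langle\xi,T\xi\rangle$ from $\lambda_b\xi=\xi b$) are routine and you have them right. In short: your argument is correct and is essentially the standard proof that the paper is citing rather than reproducing.
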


We close this subsection by weakening the defining conditions of $\Sigma_{0}$.
\begin{proposition}\label{sigma}
 The $\mathcal{B}$-bimodularity assumption in the definition of $\Sigma_{0}$
may be weakened to $\mu|_{\mathcal{B}} = Id$.
\end{proposition}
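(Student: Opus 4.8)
The plan is to recognise Proposition~\ref{sigma} as an instance of the multiplicative-domain phenomenon for completely positive maps: the requirement $\mu|_{\mathcal{B}}=\mathrm{Id}$ forces the Cauchy--Schwarz inequality attached to $\mu$ to be an \emph{equality} on $\mathcal{B}$, and this alone pins down the bimodule behaviour of $\mu$. Since $\mathcal{B}\langle X\rangle$ is only a $\ast$-algebra, I would carry out the usual matrix-amplification argument directly, using only that $\mu$ is linear, that $\mu|_{\mathcal{B}}=\mathrm{Id}$ (so in particular $\mu(1)=1$), and the positivity hypothesis \eqref{CP}; the norm estimate \eqref{bound} plays no role.

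First I would extract two consequences of \eqref{CP}. Applying it with $P_{1}=1$, $P_{2}=P$ and using that the displayed matrix is self-adjoint yields $\mu(P^{\ast})=\mu(P)^{\ast}$ for every $P$. Next, writing $\mu^{(n)}\colon M_{n}(\mathcal{B}\langle X\rangle)\to M_{n}(\mathcal{B})$ for the entrywise amplification, I claim $\mu^{(n)}(S^{\ast}S)\geq 0$ for all $S\in M_{n}(\mathcal{B}\langle X\rangle)$: indeed the $(i,j)$ entry of $\mu^{(n)}(S^{\ast}S)$ is $\sum_{k}\mu(S_{ki}^{\ast}S_{kj})$, and \eqref{CP}, applied to each family $S_{k1},\dots,S_{kn}$ and then summed over $k$, says precisely that this matrix is positive in $M_{n}(\mathcal{B})$; also $\mu^{(n)}(I)=I$.

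Then I would deduce the operator Cauchy--Schwarz inequality $\mu^{(n)}(T^{\ast}T)\geq\mu^{(n)}(T)^{\ast}\mu^{(n)}(T)$ for $T\in M_{n}(\mathcal{B}\langle X\rangle)$, by applying $\mu^{(2n)}$ to the positive element $\begin{pmatrix}I & T\\ 0 & 0\end{pmatrix}^{\ast}\begin{pmatrix}I & T\\ 0 & 0\end{pmatrix}=\begin{pmatrix}I & T\\ T^{\ast} & T^{\ast}T\end{pmatrix}$ of $M_{2n}(\mathcal{B}\langle X\rangle)$ and taking the Schur complement of the invertible corner $I$ in the resulting positive matrix $\begin{pmatrix}I & \mu^{(n)}(T)\\ \mu^{(n)}(T)^{\ast} & \mu^{(n)}(T^{\ast}T)\end{pmatrix}$ over $\mathcal{B}$. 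Specialising to $n=2$ and $T=\begin{pmatrix}b & P\\ 0 & 0\end{pmatrix}$ with $b\in\mathcal{B}$, $P\in\mathcal{B}\langle X\rangle$, this reads
$$\begin{pmatrix}\mu(b^{\ast}b)-\mu(b)^{\ast}\mu(b) & \mu(b^{\ast}P)-\mu(b)^{\ast}\mu(P)\\ \ast & \ast\end{pmatrix}\ \geq\ 0 .$$
Since $\mu|_{\mathcal{B}}=\mathrm{Id}$, the $(1,1)$ entry equals $b^{\ast}b-b^{\ast}b=0$; because a positive $2\times2$ matrix over a $C^{\ast}$-algebra with a vanishing diagonal entry has the corresponding off-diagonal entry equal to $0$ (immediate by evaluating the associated sesquilinear form on any faithful representation), we get $\mu(b^{\ast}P)=b^{\ast}\mu(P)$. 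Replacing $b$ by $b^{\ast}$ gives $\mu(bP)=b\mu(P)$ for all $b\in\mathcal{B}$ and $P\in\mathcal{B}\langle X\rangle$; taking adjoints and using $\mu(P^{\ast})=\mu(P)^{\ast}$ gives $\mu(Pb)=\mu(b^{\ast}P^{\ast})^{\ast}=(b^{\ast}\mu(P^{\ast}))^{\ast}=\mu(P^{\ast})^{\ast}b=\mu(P)b$, and combining the two yields $\mu(bPc)=b\,\mu(P)\,c$, i.e.\ $\mathcal{B}$-bimodularity. Hence $\mu$ lies in $\Sigma_{0}$ as originally defined (and, by Theorem~\ref{PV}, is the $\mathcal{B}$-valued distribution of an element of a genuine $C^{\ast}$-algebra).

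The only delicate point is that $\mathcal{B}\langle X\rangle$ carries no norm or functional calculus, so one cannot quote Choi's multiplicative-domain theorem verbatim; the fix is that every order-theoretic step above takes place \emph{after} applying an amplification $\mu^{(n)}$, hence inside the genuine $C^{\ast}$-algebra $M_{n}(\mathcal{B})$, where the Schur-complement criterion and the zero-diagonal observation are standard. I expect the step deserving the most care is verifying that \eqref{CP} really forces positivity of $\mu^{(n)}$ on every $\ast$-square $S^{\ast}S$, and not merely on the special matrices $[\mu(P_{i}^{\ast}P_{j})]$ appearing in its statement.
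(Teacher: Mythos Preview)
Your argument is correct and is a genuinely different---and considerably shorter---route than the paper's. You run the standard multiplicative-domain argument (Choi's theorem) in an ad hoc way that avoids any functional calculus in $\mathcal{B}\langle X\rangle$: property~\eqref{CP} gives $\mu^{(n)}(S^{\ast}S)\geq 0$ for every matrix $S$ (by summing the $n$ instances of \eqref{CP} corresponding to the rows of $S$), and from there the Schur-complement trick in the \emph{target} $C^{\ast}$-algebra yields operator Cauchy--Schwarz and then bimodularity. The step you flagged---that \eqref{CP} really does force positivity on all $\ast$-squares $S^{\ast}S$---is exactly the sum-of-positive-matrices observation and is fine.

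The paper proceeds very differently. It first proves Kadison-type inequalities $\mu(P^{\ast}b^{\ast}bP)\leq\|b\|^{2}\mu(P^{\ast}P)$ and $\mu(Q^{\ast}X^{2}Q)\leq 4M^{2}\mu(Q^{\ast}Q)$, the latter by completing $\mathcal{B}\langle X\rangle$ to a Banach $\ast$-algebra (using the growth bound~\eqref{bound}) in order to build a square root of $1-X^{2}/4M^{2}$ via its Taylor series. From these it shows $\mu(eP(X)f)=e\,\mu(P(X))\,f$ for projections $e,f$, concludes bimodularity first when $\mathcal{B}$ is a $W^{\ast}$-algebra, and finally passes to $\mathcal{B}^{\ast\ast}$ to handle the general $C^{\ast}$-case. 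Your proof bypasses all of this machinery and, notably, never uses~\eqref{bound}; so you in fact prove the stronger statement that bimodularity in the definition of $\Sigma$ (not just $\Sigma_{0}$) may be replaced by $\mu|_{\mathcal{B}}=\mathrm{Id}$. The paper's approach does yield the auxiliary Kadison inequalities as a byproduct, but these are not invoked elsewhere.
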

\begin{proof}

Assume that $\mu$ satisfies the weakened assumptions stated in the hypothesis.
We first claim that
\begin{equation}\label{boundB}
\mu(P(X)^{\ast}  b^{\ast}b P(X)) \leq    \| b^{\ast}b \| \mu(P(X)^{\ast}P(X)) 
\end{equation}
for all $b \in \mathcal{B}$ and $P(X) \in \mathcal{B}\langle X \rangle$ (note that since $\mathcal{B}\langle X \rangle$ only has a $\ast$-algebra structure this does require argument) .
Indeed, since $\mathcal{B}$ is a C$^{\ast}$-algebra
$0 \leq \| b^{\ast}b \| - b^{\ast}b = c^{\ast}c$ for some $c \in \mathcal{B}$.
Thus, $$0 \leq \mu ((cP(X))^{\ast}cP(X)) = \mu(P(X)^{\ast}( \| b^{\ast}b \| - b^{\ast}b)P(X))$$
proving our claim.

We next claim that \begin{equation}\label{boundX}
\mu(Q(X)^{\ast}X^{2}Q(X)) \leq 4M^{2}\mu(Q(X)^{\ast}Q(X)) 
\end{equation}
for all $Q(X) \in \mathcal{B}\langle X \rangle$.
To do so, we must define a new $\ast$-algebra that will allow us to deal with infinite series arising from the non-commutative polynomials (the proof is similar to an analogous claim in \cite{PV}).


Let $M$ denote the constant arising from property \ref{bound}.
Fix $n \in \mathbb{N}$ and consider the tensor algebra $M_{n}(\mathcal{B})_{\otimes} := \oplus_{k=1}^{\infty} \otimes^{k} M_{n}(\mathcal{B})$.  The addition operation is entry-wise.
Multiplication is defined for monomials by $$b_{1} \otimes b_{2} \otimes \cdots \otimes b_{\ell + 1} \cdot c_{1} \otimes c_{2} \otimes \cdots \otimes c_{p} =b_{1} \otimes b_{2} \otimes \cdots \otimes b_{\ell + 1}  c_{1} \otimes c_{2} \otimes \cdots \otimes c_{p} $$
and the space is given the usual $\ast$-operation (note that this space is isomorphic to $\mathcal{B}\langle X \rangle$ as a $\ast$-algebra).
We endow this space with a norm .  Given a homogeneous degree $k$ polynomial, $P(X) \in \otimes^{k} M_{n}(\mathcal{B})$,  we refer to $|P(X)|$ as the infimum
of  $\sum_{i=1}^{n} \| b_{1}^{(i)} \| \| b_{2}^{(i)}\| \cdots \|b_{k}^{(i)}\| M^{k - 1}$
where the infimum is taken over the distinct sums satisfying $P(X) = \sum_{i=1}^{n} b_{1}^{(i)}\otimes b_{2}^{(i)} \otimes \cdots \otimes b_{k}^{(i)}$ .
Given an arbitrary element $P(X) = \sum_{k=1}^{N} P_{k}(X)$ with each $P_{k}(X) \in \otimes^{k} M_{n}(\mathcal{B})$ for $k = 1 , \ldots , N$,
we let $\| P(X)\| = \sum_{k=1}^{N} | P_{k}(X)|$.  This is indeed a norm (the proof is similar to showing the same for the  minimal tensor product norm). The closure of $M_{n}(\mathcal{B})_{\otimes}$  forms a Banach $\ast$-algebra
with respect to this norm.  It follows immediately from \ref{bound} that $\mu \otimes 1_{n} : M_{n}(\mathcal{B})_{\otimes} \rightarrow M_{n}(\mathcal{B})$ is a linear contraction.
Thus, we may extend $\mu$ to the norm closure of this space and we refer to this closure as $\tilde{\mu}$.

%

We will first focus on the $n=1$ case to prove \ref{boundX}.
For $k \in \mathbb{N}$ consider the following for $P(X) := \sum_{k=1}^{\infty} P_{k}(X) $:
\begin{equation}
P_{k}(X) := \frac{X^{2k} (2k)!}{(1-2k)(k!)^{2}(4M)^{2k}} 
\end{equation}
\begin{equation}\label{merter}
\|   P_{k}(X) \|  \leq  \frac{1}{4^{k}} \end{equation}
We claim that $P(X) \in \overline{\mathcal{B}_{\otimes}}$ and that 
\begin{equation}\label{squareroot}  P(X)^{2} = 1 - \frac{X^{2}}{4M^{2}}.\end{equation}
The first claim follows easily from our estimates.
Regarding \ref{squareroot}, let $$y = - \frac{X^{2}}{4M^{2}}.$$
Given that 
$$ P(X) = 1 + \frac{1}{2}y -  \frac{1}{8}y^{2} + \frac{1}{16}y^{3} - \frac{5}{128}y^{4} + \frac{7}{256}y^{5} \pm \cdots$$
we may take the Cauchy product ($\ast$) of this series with itself and observe that, treated as a purely formal power series,
$$ P(X) \ast P(X) = 1 + y$$
as all of the higher order terms cancel.  Thus, we need only show that the the Cauchy product is convergent (the proof is essentially that of Merten's classic result but we repeat it due to the delicate nature of our setting).
Towards this end, let $$S_{n}(x) = \sum_{k=1}^{n} P_{k}(X) ; \ c_{n}(X) = \sum_{i=1}^{n} P_{i}(X)P_{n-i}(X) ; \ C_{n}(X) = \sum_{i=1}^{n} c_{i}(X).$$
Note that
$$ C_{n}(X) = \sum_{i=1}^{n} P_{i}(X) S_{n-1}(X) = \sum_{i=1}^{n}  P_{i}(X) (S_{n-i}(X) - P(X)) + S_{n}(X)P(X)$$
Thus, \begin{align} \| C_{n}(X) & - P(X)^2\|   =  \left\| \sum_{i=1}^{n}  P_{i}(X) (S_{n-i}(X) - P(X)) + [S_{n}(X) - P(X)] P(X) \right\| \nonumber \\\label{merter2}
 & \leq   \left\| \sum_{i=1}^{N}  P_{i}(X) (S_{n-i}(X) - P(X))  \right\|  + \left\| \sum_{i=N}^{n}  P_{i}(X) (S_{n-i}(X) - P(X))  \right\| \nonumber \\  & + \| S_{n}(X) - P(X)\| \| P(X) \|
 \end{align}
For each of these terms, as this is a Banach algebra, the norm of the  product is dominated by the product of the norms.
Regarding the second term, we have that $\| S_{\ell}(X) - P(X) \|$ converges to $0$ as $\ell \uparrow \infty$ so is bounded over $\ell$.
By \ref{merter}, term $2$ is the tail of a convergent series so that it can be made smaller than $\epsilon > 0$ for $N$ large enough.
Fixing $N$, the first term can be made smaller than $\epsilon$ for $n$ large enough since $\|S_{n-i}(X) - P(X) \|$ is arbitrarily small for $i = 1 , \ldots , N$.  The third term is similarly smaller than $\epsilon$ for $n$ large.  This implies that
$C_{n}(X)$ is convergent and, therefore, 
$$P(X)^{2} = 1 -   \frac{X^{2}}{4M^{2}} \in  \overline{\mathcal{B}_{\otimes}} .$$ 
As a result of this equality, we have 
\begin{align*} 
0 \leq \tilde{\mu}(Q(X)^{\ast}P(X)^{\ast}P(X)Q(X)) & = \mu \left( Q(X)^{\ast} \left[1 - \frac{X^{2}}{4M^{2}} \right]Q(X) \right) \\
 & = \mu(Q(X)^{\ast}Q(X)) - \frac{\mu(Q(X)^{\ast}X^{2}Q(X))}{4M^{2}}.
\end{align*}
This inequality implies \ref{boundX}.

Now, let $P(X) = b_{1} X b_{2} \cdots X b_{\ell + 1}$ and $e \in \mathcal{B}$ a projection.  Observe that through inductive applications of \ref{boundB} and \ref{boundX} and the assumption that $\mu|_{\mathcal{B}} = Id$, we have
$$0 \leq \mu(e P^{\ast}(X) P(X) e) \leq (2M)^{2\ell} \| b_{1} \|^{2} \cdots  \| b_{\ell + 1} \|^{2} \mu(e) =  (2M)^{2\ell} \| b_{1} \|^{2} \cdots  \| b_{\ell + 1} \|^{2} e.$$
Thus, we have 
\begin{equation}\label{biproj1}
\mu(eP^{\ast}(X)P(X)e) = e\mu(eP^{\ast}(X)P(X)e)e.
\end{equation}

We next claim that 
\begin{equation}\label{biproj2}
(1-e)\mu(eP(X)) = 0
\end{equation}
for a fixed monomial $P(X) \in \mathcal{B}\langle X \rangle$.
Indeed, the following inequalities follow from \ref{CP}:
 \begin{align*}  0 & \leq 
 \left( \begin{array}  {cc}
    1 & 0 \\
 0 & 1-e
   \end{array} \right)
\mu \otimes 1_{2}\left[ \left( \begin{array}  {cc}
   1 & P^\ast(X)e \\
 eP(X) &  eP^{\ast}(X)P(X)e
   \end{array} \right) \right] 
 \left( \begin{array}  {cc}
    1 & 0 \\
 0 & 1-e
   \end{array} \right) \\
& =  \left( \begin{array}  {cc}
    1 & 0 \\
 0 & 1-e
   \end{array} \right)
 \left( \begin{array}  {cc}
   1 & \mu(P^\ast(X)e) \\
 \mu( eP(X)) & \mu( eP^{\ast}(X)P(X)e)
   \end{array} \right)  
 \left( \begin{array}  {cc}
    1 & 0 \\
 0 & 1-e
   \end{array} \right) \\
& =  \left( \begin{array}  {cc}
   1 & \mu(P^\ast(X)e)(1-e) \\
 (1-e)\mu( eP(X)) & (1-e)\mu( eP^{\ast}(X)P(X)e)(1-e)
   \end{array} \right)  
\end{align*}
Now, it is shown in chapter $3$, Exercise $ 3.2 (i) $ of \cite{Vern} that the positivity of the indicated matrix implies that, for any representation of
the algebra $\mathcal{B}$ on Hilbert space $\mathcal{H}$, we have that for $\eta ,  \xi \in \mathcal{H}$,
$$ |\langle \mu(P^{\ast}(X)e)(1-e)\eta , \xi \rangle|^{2} \leq  \langle  \eta , \eta \rangle \langle (1-e)\mu( eP^{\ast}(X)P(X)e)(1-e) \xi , \xi \rangle  $$
and it follows from \ref{biproj1} that the right hand side is equal to zero.  This implies \ref{biproj2}.

Let $P(X) \in \mathcal{B}\langle X \rangle$ and $e,f \in \mathcal{B}$ be projections.
Consider the following two equalities which follow from \ref{biproj2}:
\begin{align*}
& \mu(eP(X)f) \\  & = e \mu(eP(X)f)f + (1-e) \mu(eP(X)f)f +  e \mu(eP(X)f) (1-f)  + (1-e) \mu(eP(X)f)(1-f) \\ & = e\mu(eP(X)f)f 
\end{align*}
\begin{align*}
& e \mu(P(X) ) f  \\ & =  e \mu(e P(X) f ) f  +  e \mu((1-e) P(X)f ) f  +  e \mu( e P(X) (1-f) ) f  +  e \mu((1-e)P(X)(1-f) ) f  \\ & =  e \mu(eP(X)f)f
\end{align*}
It follows that  $\mu(eP(X)f) =  e \mu(P(X) ) f $.  If we assume that  $\mathcal{B}$ is a W$^{\ast}$-algebra, we have that
$\mu(bP(X)b') = b \mu(P(X) ) b' $ for any $b,b' \in \mathcal{B}$, proving our proposition in this case.

We conclude our proof by extending this proposition to the case of C$^{\ast}$-algebras.  To do so, we will extend our map $\mu$
to a map $\mu^{\ast \ast} : \mathcal{B}^{\ast \ast} \langle X \rangle \rightarrow \mathcal{B}^{\ast \ast}$.
We begin with a basic observation in functional analysis.

Let $\mathcal{X}$ and $\mathcal{Y}$ denote normed spaces and consider $\mathcal{X} \otimes \mathcal{Y}$ endowed with the projective norm.
We claim that $\mathcal{X}^{\ast \ast} \otimes \mathcal{Y}^{\ast \ast}$ embeds canonically  into $(\mathcal{X} \otimes \mathcal{Y})^{\ast \ast}$.
We will do so by showing that $\mathcal{X}^{\ast \ast} \otimes \mathcal{Y}^{\ast \ast}$ is in duality with $(\mathcal{X} \otimes \mathcal{Y})^{\ast }$ in a canonical manner.
Indeed, consider an element $\phi \in (\mathcal{X} \otimes \mathcal{Y})^{\ast} \cong B(\mathcal{X},\mathcal{Y}^{\ast})$.
Note that we may extend this map to  $\phi^{\ast} : \mathcal{Y} ^{\ast \ast} \rightarrow \mathcal{X}^{\ast}$
by letting $$ \phi^{\ast}(\eta) := \eta \circ \phi.$$  Repeating this process, we may extend $\phi$
to an element $\phi^{\ast \ast} \in B(\mathcal{X}^{\ast \ast} , \mathcal{Y}^{\ast \ast \ast}) \cong (\mathcal{X}^{\ast \ast} \otimes \mathcal{Y}^{\ast \ast})^{\ast}$
and a basic calculation shows that the $\phi^{\ast \ast}|_{\hat{\mathcal{X}}} = \phi$ where $\hat{\mathcal{X}}$ is the canonical isometric embedding of $\mathcal{X}$ into $\mathcal{X}^{\ast \ast}$.
Given $x \in \mathcal{X}^{\ast \ast}$ and $y \in \mathcal{Y}^{\ast \ast}$, we define an operation whereby $x \otimes y \cdot \phi := \phi^{\ast \ast}(x \otimes y)$ so that $x \otimes y \in [(\mathcal{X} \otimes \mathcal{Y})^{\ast}]^{\ast}$,
proving our claim.


Returning to our normed tensor algebras $ M_{n}(\mathcal{B})_{\otimes}$ , it follows from our functional analysis observation that $$ \oplus_{k=1}^{\infty} \otimes^{k} (M_{n}(\mathcal{B})^{\ast \ast})  \subset M_{n}(\mathcal{B})_{\otimes}^{\ast \ast}. $$
We define an extension $\mu^{\ast} : M_{n}(\mathcal{B})^{\ast } \rightarrow  M_{n}(\mathcal{B})_{\otimes}^{\ast }  $ by letting $\mu^{\ast}(\phi) := \phi \circ \mu $ for all $\phi \in M_{n}(\mathcal{B})^{ \ast}$.
Repeating this process, we obtain a canonical extension $\mu^{\ast \ast} : M_{n}(\mathcal{B})_{\otimes}^{\ast \ast }  \rightarrow  M_{n}(\mathcal{B})^{\ast \ast }$.

Now, observe that convergence of a  norm bounded sequence $b^{(n)} \rightarrow b$ in the strong operator topology on $M_{n}(\mathcal{B})^{\ast \ast}$ implies that the same is true in the strong operator topology on $ M_{n}(\mathcal{B})_{\otimes}^{\ast \ast }$
(since a faithful representation of $M_{n}(\mathcal{B})_{\otimes}^{\ast \ast }$ restricts to a faithful representation of $M_{n}(\mathcal{B})^{\ast \ast}$).
Thus, given a two sequences $b_{i}^{(n)} \rightarrow b_{i}$ in the strong operator topology on $M_{n}(\mathcal{B})^{\ast \ast}$,
we note that $b_{1}^{(n)}\otimes b_{2}^{(n)}  \rightarrow b_{1} \otimes b_{2}$  in the strong operator topology on $M_{n}(\mathcal{B})_{\otimes}^{\ast \ast }$.
Indeed, this follows from multiplicativity of the strong topology on bounded sets and the fact that $b_{1}^{(n)}\otimes b_{2}^{(n)} = b_{1}^{(n)}(1 \otimes 1)  b_{2}^{(n)}$.
Continuing inductively, $b_{1}^{(n)} \otimes b_{2}^{(n)} \otimes\cdots \otimes b_{k }^{(n)} \rightarrow b_{1} \otimes b_{2} \otimes \cdots \otimes b_{k} $ whenever the norm bounded sequences $b_{i}^{(n)} \rightarrow b_{i}$
in the strong operator topology on $M_{n}(\mathcal{B})^{\ast \ast}$ for each $i = 1 , \ldots , k$.

We are now able to show that our extension $\mu^{\ast \ast}$  is equal to the identity when restricted to $\mathcal{B}^{\ast \ast}$ and  satisfies properties \ref{CP} and \ref{bound} .
Indeed, given $\phi \in M_{n}(\mathcal{B})^{ \ast}$, we have that $\phi \circ \mu^{\ast \ast}$ is a bounded linear functional on  $M_{n}(\mathcal{B})_{\otimes}^{\ast \ast}$
so that a sequence $P_{n}(X) \rightarrow P(X)$ strongly in $M_{n}(\mathcal{B})_{\otimes}^{\ast \ast}$ implies that $\mu^{\ast \ast}(P_{n}(X)) \rightarrow \mu^{\ast \ast}(P(X))$
in the weak$^{\ast}$ topology on  $M_{n}(\mathcal{B})^{\ast \ast}$.  As all three of the properties survive weak limits (due variously to continuity, the fact that the positive cone is weak$^{\ast}$ closed and the Banach-Alaoglu theorem),
the extension $\mu^{\ast \ast}: \mathcal{B}^{\ast \ast} \langle X \rangle \rightarrow \mathcal{B}^{\ast \ast}$ satisfies all of the hypotheses of our proposition and is defined on a non-commutative W$^{\ast}$-probability space.
By the previous argument, this implies that $\mu^{\ast \ast}$ is $\mathcal{B}^{\ast \ast}$-bimodular so that we may conclude that $\mu \in \Sigma_{0}$, completing our proof.
\end{proof}

\subsection{Non-commutative Function Theory}\label{NC_prelim}

Throughout this subsection, we will utilize the definitions and terminology found in \cite{KVV}.

Let $\mathcal{B} , \mathcal{A}$ denote a unital $C^{\ast}$-algebras. 
We define the \textit{noncommutative space over $\mathcal{B}$} as the set $\mathcal{B}_{nc} = \{ M_{n}(\mathcal{B}) \}_{n=1}^{\infty}$.
A \textit{non-commutative set} is a subset $\Omega \subset \mathcal{B}_{nc}$
that respects direct sums.  That is, for $X \in \Omega \cap M_{n}(\mathcal{B})$ and $Y \in \Omega \cap M_{p}(\mathcal{B})$ we have that $X \oplus Y \in \Omega \cap M_{n+p}(\mathcal{B})$.
We note that these definitions apply for more general $\mathcal{B}$ over any unital, commutative ring,  but we focus on the $C^{\ast}$-algebraic setting.

A \textit{non-commutative} function is a map $f : \Omega \rightarrow \mathcal{A}_{nc}$ with the following properties:
\begin{enumerate}
 \item $f(\Omega_{n}) \subset M_{n}(\mathcal{A})$
\item  $f$ respects direct sums : $f(X \oplus Y) = f(X) \oplus f(Y)$
\item $f$ respects similarities: For $X \in \Omega_{n}$ and $S \in M_{n}(\mathbb{C})$ invertible we have that
$$f(SXS^{-1}) = Sf(X)S^{-1} $$
provided that $SXS^{-1} \in \Omega_{n}$.
\end{enumerate}

We say that a non-commutative set $\Omega$ is \textit{right-admissible}
if for every $X \in \Omega_{n}$, $Y \in \Omega_{p}$ and $Z \in M_{n+p}(\mathcal{B})$,
there exists a $\lambda \in \mathbb{C}$ such that
$$ \left[ \begin{array}  {cc}
    X & \lambda Z \\
 0 & Y
   \end{array} \right] \in \Omega_{n+p}. $$

Let $X \in M_{n}(\mathcal{B})$, $Y \in M_{p}(\mathcal{B})$ and $Z \in M_{n \times p} (\mathcal{B})$ and fix $Z \in M_{n \times p} (\mathcal{B})$
such that
$$  \left[ \begin{array}  {cc}
    X & Z \\
 0 & Y
   \end{array} \right] = M_{n+p}(\mathcal{B}) $$
We define a differential calculus for these non-commutative functions by defining the \textit{right difference-differential operator}, $\Delta_{R}$, 
implicitly through the equation $$f\left( \left[ \begin{array}  {cc}
    X & Z \\
 0 & Y
   \end{array} \right] \right) = \left[ \begin{array}  {cc}
    f(X) & \Delta_{R}f(X,Y)(Z) \\
 0 & f(Y)
   \end{array} \right]$$ 
for $X \in M_{n}(\mathcal{B})$, $Y \in M_{p}(\mathcal{B})$ and $Z \in M_{n \times p} (\mathcal{B})$.
If we assume that $\Omega$ is right admissible, for fixed $X$ and $Y$, this map extends to a linear operator from $M_{n \times p} (\mathcal{B})$ to $M_{n \times p} (\mathcal{A})$.

This process may be iterated on increasing orders of $2 \times 2$ block triangular matrices to define the \textit{ $\ell$-th higher order difference-differential operator}, $\Delta^{\ell}_{R}$.
However, for a right admissible set, the calculation reduces to the following equality, which is proven in Theorem $3.11$ in \cite{KVV}:
for $X_{0} \in \Omega_{n_{0}} ,  \ldots , X_{\ell} \in \Omega_{n_{\ell}}$ and  
$B_{1} \in M_{n_{0} \times n_{1}}(\mathcal{B}) ,  \ldots , B_{\ell} \in M_{n_{\ell - 1} \times n_{\ell}}$
we have that
\begin{align}
& f\left( \left[ \begin{array}  {cccccc}
    X_{0} & B_{1} & 0 &  \cdots & 0 & 0 \\
0 & X_{1} & B_{2} & \cdots & 0 & 0 \\
 \ & \vdots & \ & \ & \vdots & \ \\
0 & 0 & 0 & \cdots & X_{\ell - 1} & B_{\ell}  \\
0 & 0 & 0 & \cdots & 0 & X_{\ell}
   \end{array} \right] \right) \nonumber \\ & = \left[ \begin{array}  {cccccc}
    f(X_{0}) & \Delta_{R}f(X_{0},X_{1})(B_{1}) & \ &  \cdots & \ & \Delta_{R}^{\ell}f(X_{0} , \ldots , X_{\ell})(B_{1}, \ldots , B_{\ell }) \\
0 & f(X_{1}) & \ & \cdots & \ & \Delta_{R}^{\ell -1}f(X_{1} , \ldots , X_{\ell})(B_{2}, \ldots , B_{\ell})  \\
 \ & \vdots & \ & \ & \vdots & \ \\
0 & 0 & 0 & \cdots & f(X_{\ell - 1}) & \Delta_{R}f(X_{\ell - 1},X_{\ell})(B_{\ell})  \\
0 & 0 & 0 & \cdots & 0 & f(X_{\ell}) 
   \end{array} \right]\label{matrix}  
\end{align}
It can be shown that $\Delta^{\ell}_{R}$ is linear in each of the variables $B_{i}$.  This will become very important in later sections as these operators
will define operators on $\otimes_{k} M_{n}(\mathcal{B})$.

Let $f = (f^{(n)})_{n=1}^{\infty}$.  We extend the definition for locally bounded to this non-commutative setting by saying that a non-commutative function $f$ is \textit{locally bounded in slices}
if $f|_{\Omega_{n}}$ is locally bounded in the sense of subsection \ref{analytic}. Given an element $Y \in \Omega_{1}$, the \textit{non-commutative unit ball} of radius $r$, $B_{nc}(Y,r)$,
is equal to $\{ X \in \Omega_{n}: \| X - \oplus^{n} Y\| < r \}$ (all of the spaces that we will be dealing with are open in the topology generated by these balls so we will not dwell on the finer points of this theory).
A function is \textit{uniformly locally bounded} at $Y \in \Omega_{1}$ if $f$ there exists $r , M > 0$ such that $\| f^{(n)}(X) \| < M$ for all $n \in \mathbb{N}$ and $X \in B_{nc}(Y,r) \cap \Omega_{n}$.
The function $f$ is \textit{uniformly analytic}
 if it is uniformly locally bounded and  G\^{a}teaux differentiable.  The following theorem unites the analytic function theory from subsection \ref{analytic} with non-commutative function theory.
We refer to section $7$ of \cite{KVV} for proof.


\begin{theorem}\label{difference}
Let a non-commutative function $f: \Omega \rightarrow \mathcal{A}_{nc}$ be locally bounded on slices.
Then
\begin{enumerate}
 \item $f$ is G\^{a}teaux differentiable.
\item  For every $n \in \mathbb{N}$, $Y \in \Omega_{n}$, $Z \in M_{n}(\mathcal{B})$ and each $N \in \mathbb{N}$,
$$\frac{1}{N!} \frac{d^{N}}{dt^{N}} f^{(n)}(Y + tZ) |_{t=0} = \Delta_{R}^{N} f^{(n)}(\underbrace{Y,\ldots,Y}_{(N+1)- times})(\underbrace{Z,\ldots,Z}_{N - times}) $$
\end{enumerate}
Moreover,  there exists $r_{Y,n} > 0$ such that 
$$ f^{(n)}(X) = \sum_{\ell = 0}^{\infty} \Delta_{R}^{\ell} f^{(n)}(Y, \ldots , Y)(X - Y , \ldots , X-Y)$$
for all $X \in \Omega_{n}$ such that $\| X-Y \| < r_{Y,n}$.  Additionally, if $f$ is uniformly locally bounded at $Y \in \Omega_{1}$
then there exists a fixed $r > 0$ such that 
$$ f^{(n)}(X) = \sum_{\ell = 0}^{\infty} \Delta_{R}^{\ell} f^{(n)}(\oplus^{n}Y, \ldots , \oplus^{n}Y)(X - \oplus^{n}Y , \ldots , X-\oplus^{n}Y)$$
for all $X \in \Omega_{n} \cap B_{nc}(Y,r)$.
\end{theorem}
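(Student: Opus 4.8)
\emph{Sketch of the argument.} The plan is to work one slice at a time and to transfer the classical vector-valued theory of Subsection~\ref{analytic}, applied to $f^{(n)}\colon\Omega_n\to M_n(\mathcal A)$, into the non-commutative setting, using only the algebraic structure of a non-commutative function—respect for direct sums and for similarities—to match the classical Taylor data of $f^{(n)}$ with the difference-differential operators $\Delta_R^{\ell}$. The cornerstone is the identity $f(A)-f(B)=\Delta_R f(A,B)(A-B)$ for $A,B\in\Omega_n$: since $\Omega$ respects direct sums, $A\oplus B\in\Omega_{2n}$, and conjugating $A\oplus B$ by $S=\left(\begin{smallmatrix}I&I\\0&I\end{smallmatrix}\right)$ gives $\left(\begin{smallmatrix}A&B-A\\0&B\end{smallmatrix}\right)\in\Omega_{2n}$; applying the similarity property and the defining equation of $\Delta_R$ to this matrix yields the identity. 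The same computation with the conjugator $S_t=\left(\begin{smallmatrix}I&tI\\0&I\end{smallmatrix}\right)$ shows $\left(\begin{smallmatrix}A&t(B-A)\\0&B\end{smallmatrix}\right)\in\Omega_{2n}$ and that $t\big(f(B)-f(A)\big)$ is the $(1,2)$-block of $f^{(2n)}$ there; since $\Omega_{2n}$ is open and $f^{(2n)}$ is locally bounded, for fixed $t$ and $A$ close enough to $B$ this matrix lies in a fixed ball about $B\oplus B$ on which $\|f^{(2n)}\|\le M$, whence $\|f^{(n)}(A)-f^{(n)}(B)\|\le M/t$. As $t$ is arbitrary, every slice $f^{(n)}$ is continuous.

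With continuity of all slices in hand, part~(1) is immediate: by the cornerstone identity $\big(f^{(n)}(X+\zeta Z)-f^{(n)}(X)\big)/\zeta=\Delta_R f^{(n)}(X+\zeta Z,X)(Z)$, and $\Delta_R f^{(n)}(W,X)(Z)$ equals $\lambda^{-1}$ times the $(1,2)$-block of $f^{(2n)}\!\left(\begin{smallmatrix}W&\lambda Z\\0&X\end{smallmatrix}\right)$ for one fixed small $\lambda$ and all $W$ near $X$, hence is continuous in $W$; letting $\zeta\to0$ shows $f^{(n)}$ is G\^{a}teaux differentiable with $\delta f^{(n)}(X;Z)=\Delta_R f^{(n)}(X,X)(Z)$, which is the $N=1$ instance of part~(2). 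For general $N$ I would induct, using that $\Delta_R^{N}$ is obtained by applying $\Delta_R$ in the first variable to $\Delta_R^{N-1}$—a consequence of the iterated block-matrix formula \eqref{matrix}—together with homogeneity of $\Delta_R^{N}$ of degree $N$ in its $\mathcal B$-increments and the continuity just established: each differentiation of $g(t):=f^{(n)}(Y+tZ)$ at $t=0$ strips off one difference quotient, and the bookkeeping produces $N!\,\Delta_R^{N}f^{(n)}(Y,\dots,Y)(Z,\dots,Z)$, exactly as in the Taylor--Taylor development of Chapters~3--4 of \cite{KVV}.

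For the convergence statements I would invoke Subsection~\ref{analytic}. Each $f^{(n)}$ is locally bounded and G\^{a}teaux differentiable, hence analytic in the sense of Subsection~\ref{analytic}, so Theorem~\ref{analytic_theorem} produces $r_{Y,n}>0$ with $f^{(n)}(X)=\sum_{\ell\ge0}\tfrac1{\ell!}\delta^{\ell}f^{(n)}(Y;X-Y)$ for $\|X-Y\|<r_{Y,n}$; by part~(2) the $\ell$-th summand is $\Delta_R^{\ell}f^{(n)}(Y,\dots,Y)(X-Y,\dots,X-Y)$, which is the asserted expansion. To control the radius, let $M_{Y,n}$ bound $\|f^{(n)}\|$ on a ball of radius $\rho$ about $Y$; Theorem~\ref{Cauchy} gives $\|\delta^{\ell}f^{(n)}(Y;h)\|\le M_{Y,n}\,\ell!$ for $\|h\|<\rho$, and homogeneity of $\delta^{\ell}$ upgrades this to $\|\Delta_R^{\ell}f^{(n)}(Y,\dots,Y)(Z,\dots,Z)\|=\tfrac1{\ell!}\|\delta^{\ell}f^{(n)}(Y;Z)\|\le M_{Y,n}(\|Z\|/\rho')^{\ell}$ for every $\rho'<\rho$, so the series converges geometrically for $\|X-Y\|<\rho'$.

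The uniform statement then comes for free: if $f$ is uniformly locally bounded at $Y\in\Omega_1$, then (with $Y$ replaced by $\oplus^nY$) both $M_{Y,n}=M$ and $\rho$ may be chosen independently of $n$, so the bound $\|\Delta_R^{\ell}f^{(n)}(\oplus^nY,\dots,\oplus^nY)(X-\oplus^nY,\dots,X-\oplus^nY)\|\le M(\|X-\oplus^nY\|/\rho')^{\ell}$ holds with one $\rho'$ for all $n$; the resulting power series converges on $\|X-\oplus^nY\|<\rho'$ and, agreeing with $f^{(n)}$ on a smaller ball by the slice-wise case, equals $f^{(n)}$ there by uniqueness of analytic continuation, giving a single radius $r=\rho'$ valid across all slices. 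The main obstacle is the coefficient identity of part~(2) for $N\ge2$: the convergence bookkeeping and the $N=1$ case are essentially a translation of the classical lemmas of Subsection~\ref{analytic}, whereas identifying the $N$-th classical Taylor coefficient of $f^{(n)}(Y+tZ)$ with $\Delta_R^{N}f^{(n)}(Y,\dots,Y)(Z,\dots,Z)$ requires a careful iteration of \eqref{matrix} and the structural properties of $\Delta_R^{N}$ from \cite{KVV}; a secondary subtlety is that the continuity bootstrap of the first paragraph must precede any appeal to Theorem~\ref{analytic_theorem}, which already presupposes G\^{a}teaux differentiability.
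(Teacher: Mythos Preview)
The paper does not prove this theorem; it simply refers the reader to Section~7 of \cite{KVV}. Your sketch is precisely the argument carried out there---the finite-difference identity via similarity of $A\oplus B$, continuity from local boundedness of the $2n$-slice, G\^{a}teaux differentiability from continuity of $\Delta_R$ in its first argument, and then the classical Taylor theory of Subsection~\ref{analytic} combined with the Cauchy estimates for convergence---and it is correct. One small point: in the first paragraph, the containment $S(A\oplus B)S^{-1}\in\Omega_{2n}$ does not follow from the direct-sum property alone; you need openness of the slice $\Omega_{2n}$ (which you do invoke correctly in the continuity argument that follows) or right-admissibility, but this is a cosmetic fix.
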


The following proposition states that the differential and matricial structure of non-commutative analytic functions coincide.  This will be of crucial importance as we will need to recover linear operators from the differentials of the non-commutative functions and this result will be invoked to show that the recovered operators are well defined.
We also refer to section $7$ of \cite{KVV}  for proof.

\begin{proposition}\label{matrix_prop}
Let $f$ denote a non-commutative function with $f^{(1)}(0) = 0$ .
Let  $B_{1} , \ldots , B_{\ell} \in M_{n}(\mathcal{B})$ with $B_{p} = (b_{i,j}^{(p)})_{i,j = 1}^{n}$ for $p = 1, \ldots , \ell$.  Then
\begin{align*}
& \Delta_{\mathcal{R}}^{\ell}f^{(n)}(0 , \ldots , 0 ; B_{1} , \ldots , B_{\ell}) \\ 
= & \left(\sum_{k_{1} , \ldots , k_{\ell - 1}= 1}^{n} \Delta_{\mathcal{R}}^{\ell}f^{(1)} (0,\ldots , 0; b^{(1)}_{i,k_{1}} , b^{(2)}_{k_{1} , k_{2}} , \ldots , b^{(\ell)}_{k_{\ell - 1},j} )  \right)_{i,j=1}^{n} .
\end{align*}
\end{proposition}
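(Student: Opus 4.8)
The plan is to proceed by induction on $\ell$, exploiting the recursive definition of $\Delta_{\mathcal{R}}^{\ell}$ as $\Delta_{\mathcal{R}}$ applied to $\Delta_{\mathcal{R}}^{\ell-1}$, together with the key structural fact that $\Delta_{\mathcal{R}}^{\ell}f^{(n)}$ is multilinear in the increments $B_1,\dots,B_\ell$ and hence it suffices to verify the claimed formula on elementary matrices $E_{i,j}$. First I would treat the base case $\ell = 1$. Here one wants $\Delta_{\mathcal{R}}f^{(n)}(0,0;B) = \big(\Delta_{\mathcal{R}}f^{(1)}(0,0;b_{i,j})\big)_{i,j=1}^n$. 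By multilinearity it is enough to check $B = b\,E_{i,j}$ for a scalar-free $b \in \mathcal{B}$ sitting in the $(i,j)$ slot; but $b\,E_{i,j}$ is a direct-sum-and-similarity rearrangement of the $2\times2$ (or $1\times2$) block pattern $\left[\begin{array}{cc} 0 & b \\ 0 & 0\end{array}\right]$ padded by zeros, and the defining equation of $\Delta_{\mathcal{R}}$ on the $2\times2$ block upper-triangular matrix $\left[\begin{array}{cc} X & Z \\ 0 & Y\end{array}\right]$ with $X=Y=0$, combined with the fact that $f$ respects direct sums and similarities (properties (2) and (3) of a non-commutative function) and $f^{(1)}(0)=0$, forces the off-diagonal block of $f$ evaluated on this enlarged matrix to be exactly $\Delta_{\mathcal{R}}f^{(1)}(0,0;b)$ in the $(i,j)$ entry and $0$ elsewhere. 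This is essentially the content of the matricial consistency results in section $7$ of \cite{KVV}, which I would cite for the precise bookkeeping.

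For the inductive step, assume the formula holds at level $\ell - 1$. Using \eqref{matrix} with all $X_p = 0$, the operator $\Delta_{\mathcal{R}}^{\ell}f^{(n)}(0,\dots,0;B_1,\dots,B_\ell)$ appears as the extreme corner entry of $f$ evaluated on the big bidiagonal nilpotent matrix, and one reads off from the same display that it equals $\Delta_{\mathcal{R}}$ of the level-$(\ell-1)$ object in the variable $B_1$ — more precisely there is the chain-rule-type identity $\Delta_{\mathcal{R}}^{\ell}f^{(n)}(0,\dots,0;B_1,\dots,B_\ell) = \Delta_{\mathcal{R}}\big[\,X_0 \mapsto \Delta_{\mathcal{R}}^{\ell-1}f(X_0,0,\dots,0;B_2,\dots,B_\ell)\,\big](0,0;B_1)$ coming from Theorem $3.11$ in \cite{KVV}. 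Now substitute the inductive hypothesis for the inner $\Delta_{\mathcal{R}}^{\ell-1}f^{(n)}$, apply linearity of $\Delta_{\mathcal{R}}$ in its increment and its compatibility with the entrywise matrix structure (again the base-case reasoning, now applied to the $\mathcal{B}$-valued function $X_0 \mapsto \Delta_{\mathcal{R}}^{\ell-1}f^{(1)}(X_0,0,\dots,0;\cdot)$), and collect the resulting sum over the new internal index $k_1$ alongside the indices $k_2,\dots,k_{\ell-1}$ already present. The bookkeeping of which scalar-entry multiplies which — producing the contracted product $b^{(1)}_{i,k_1}b^{(2)}_{k_1,k_2}\cdots b^{(\ell)}_{k_{\ell-1},j}$ — is exactly the matrix-multiplication pattern, and falls out because each application of $\Delta_{\mathcal{R}}$ in slot $B_p$ inserts one free index with the block-triangular adjacency dictated by \eqref{matrix}.

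The main obstacle I anticipate is not conceptual but organizational: making the passage from "$\Delta_{\mathcal{R}}^\ell$ is the corner entry of $f$ on a big nilpotent matrix" to "$\Delta_{\mathcal{R}}^\ell$ at level $n$ decomposes entrywise through $\Delta_{\mathcal{R}}^\ell$ at level $1$" genuinely rigorous. The subtlety is that the increments $B_p$ live in $M_n(\mathcal{B})$, so one is simultaneously enlarging the matrix size (from $n$ to $n_0+\dots+n_\ell$ with each $n_p = n$) and tracking the internal $M_n$ indices; one must be careful that the direct-sum/similarity manipulations used to reduce to elementary increments $E_{i,j}$ commute correctly with the block-triangular enlargement defining $\Delta_{\mathcal{R}}^\ell$. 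The cleanest way around this is to phrase everything through the single master identity \eqref{matrix} applied to a carefully chosen $(\ell+1)$-fold block matrix whose blocks are themselves chosen as $\oplus^n 0$ and elementary-matrix increments, so that the $M_n$-index structure and the block structure are handled by one and the same application of the non-commutative function axioms; the entrywise formula then emerges by comparing the $(1,\ell+1)$ corner block on both sides. I would present this as the core computation and relegate the multilinearity reduction and the cited \cite{KVV} facts to brief remarks.
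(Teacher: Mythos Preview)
The paper does not give its own proof of this proposition; it simply refers the reader to section~7 of \cite{KVV}. Your proposal therefore supplies more than the paper does.

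Your inductive strategy --- reduce by multilinearity to elementary increments $b\,E_{i,j}$, handle the base case via the direct-sum and similarity axioms together with $f^{(1)}(0)=0$, and pass from $\ell-1$ to $\ell$ using the recursive structure encoded in \eqref{matrix} --- is the standard route and is essentially how the argument runs in \cite{KVV}. The organizational concern you raise (tracking the internal $M_n$ indices alongside the block-triangular enlargement) is genuine but not a gap: your own suggestion, to phrase everything as one application of \eqref{matrix} to an $(\ell+1)$-fold bidiagonal nilpotent matrix with elementary-matrix increments and then read off the $(1,\ell+1)$ corner, is exactly the clean resolution. The proposal is correct.
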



\subsection{$\mathcal{B}$-valued Distributions and Their Transforms.}\label{transforms}

We refer to \cite{V3} for the basics in the function theory for operator valued free probability.  We refer to \cite{Speichop} for the combinatorial aspects of the subject.
Let $\mu , \nu \in \Sigma_{0}$. Let $$M_{n}^{+ , \epsilon}(\mathcal{B}) := \{ b \in M_{n}(\mathcal{B}) : \ \Im{(b)} > \epsilon 1_{n} \}; \  M_{n}^{+}(\mathcal{B}) := \cup_{\epsilon > 0} M_{n}^{+ , \epsilon}(\mathcal{B}) .$$
 We define the $\mathcal{B}$-valued \textit{Cauchy transform} $G_{\mu} = (G_{\mu}^{(n)})_{n=1}^{\infty}$ with
$$ G_{\mu}^{(n)}: = \mu \otimes 1_{n}[(b-X \otimes 1_{n})^{-1}] = \sum_{n=0}^{\infty} \mu((b^{-1}X)^{n}b^{-1}) : M_{n}^{+}(\mathcal{B}) \rightarrow M_{n}^{-}(\mathcal{B}) .$$
It was shown in the pioneering work of Voiculescu \cite{V2} that these are non-commutative functions
with domain equal to the \textit{non-commutative upper half plane}, defined as $H^{+}(\mathcal{B}) = \{ M_{n}^{+}(\mathcal{B}) \}_{n=1}^{\infty}$.  We also define the set $H^{+}_{\epsilon}(\mathcal{B}) = \{ M_{n}^{+,\epsilon}(\mathcal{B}) \}_{n=1}^{\infty}$
for fixed $\epsilon > 0$.
Lastly, it will often be easier to utilize the \textit{F-transform}, defined through the equality
$$F_{\mu} := G_{\mu}^{-1} : H^{+}(\mathcal{B}) \rightarrow H^{+}(\mathcal{B}).$$  We note here that the $F$-transform satisfies the inequality $$\Im{(F^{(n)}(b))} \geq \Im{(b)}$$ for all $b \in M_{n}^{+}(\mathcal{B})$ and refer to \cite{BPV} for proof of this fundamental fact.

We define the \textit{R-transform} to be the function $$\mathcal{R}^{(n)}_{\mu}(b) := (G^{(n)}_{\mu})^{\langle -1 \rangle}(b) - b^{-1}$$ where the superscript $\langle -1 \rangle$ refers to the inverse under composition.
Note that this function is not, in general, defined on $H^{+}(\mathcal{B})$, but  in a uniform neighborhood of $0$.
We define the \textit{Voiculescu transform} to be the function $\varphi_{\mu}(b) := \mathcal{R}_{\mu}(b^{-1})$.
Note that $$\varphi_{\mu}^{(n)}(b) = (F_{\mu}^{(n)})^{\langle -1 \rangle}(b) - b.$$
The significance of these functions in non-commutative probability is a result of the following equalities:
$$ \mathcal{R}_{\mu \boxplus \nu} = \mathcal{R}_{\mu} + \mathcal{R}_{\nu} ; \ \varphi_{\mu \boxplus \nu} = \varphi_{\mu} + \varphi_{\nu}. $$

We refer to \cite{Speichop} for the combinatorial definition of the \textit{free cumulants}, $c_{\nu , n}^{(\ell+1)}: \otimes^{\ell} M_{n}(\mathcal{B}) \rightarrow  M_{n}(\mathcal{B})$.
The coefficients of the $\mathcal{R}$-transform are equal to $$\kappa^{\ell, n}_{\nu}(b) := c_{\nu}^{(\ell+1)}(\underbrace{b, \ldots , b}_{\ell -  times}).$$
Indeed, in an appropriate neighborhood of $0$, we have that
$$ \mathcal{R}^{(n)}_{\mu}(b) = \sum_{\ell=1}^{\infty} \kappa^{\ell, n}_{\mu}(b) \ ; \ \  \mathcal{\varphi}^{(n)}_{\mu}(b) = \sum_{\ell=1}^{\infty} \kappa^{\ell, n}_{\mu}(b^{-1}) $$
We note for the sake of clarity that the function $\mathcal{R}^{(n)}(b) b$ is sometimes used as an alternate definition of the $\mathcal{R}$-transform.
In terms of convergence of these respective series, note that
\begin{equation}\label{conv}
 \| c_{\nu, n}^{(\ell+1)}(b_{1}, b_{2} , \ldots , b_{\ell}) \| \leq (4M)^{\ell + 1} \| b_{1} \| \| b_{2} \| \cdots \| b_{\ell}\| \end{equation}
so that respective series converge provided that $\| b \| < (4M)^{(\ell + 1)/\ell}$ (re: $\| b^{-1} \| < (4M)^{(\ell + 1)/\ell})$ .

Given $\mu \in \Sigma$, we define a  linear map $\rho_{\mu}: \mathcal{B}\langle X \rangle \rightarrow \mathcal{B}$ to be the $\mathcal{B}$-bimodular linear extension of the map
$$ \rho_{\mu}(Xb_{1}X \cdots b_{\ell}X) = c^{\ell + 1}(b_{1} , b_{2} , \cdots , b_{\ell}) .$$
We define $\mathcal{B}\langle X \rangle_{0}$ to be the elements of  $\mathcal{B}\langle X \rangle$  with no constant term.
The following theorem will  arise in a key step in characterizing infinitely divisible distributions in terms of their transforms.
\begin{theorem}\cite{PV}\label{rho_pos}
Let $\mu \in \Sigma_{0}$.  The following conditions are equivalent:
\begin{enumerate}
\item The distribution $\mu$ is $\boxplus$-infinitely divisible.
\item The restriction of $\rho_{\mu}$ to  $\mathcal{B}\langle X \rangle_{0}$ is positive.
\item\label{PVremark}  There exists a self adjoint $\alpha \in \mathcal{B}$ and a $\mathbb{C}$-linear map $\sigma: \mathcal{B}\langle X \rangle \rightarrow \mathcal{B}$ satisfying \ref{CP} and \ref{bound} such that
$$ \mathcal{R}_{\mu}^{(n)}(b) = \alpha \otimes 1_{n} + \sigma(b(1_{n} - (X\otimes 1_{n}) b)^{-1}).$$
\end{enumerate}
\end{theorem}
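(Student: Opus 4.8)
The plan is to establish the cycle $(1)\Rightarrow(2)\Rightarrow(3)\Rightarrow(1)$. Conceptually, $(1)\Leftrightarrow(2)$ is the operator-valued avatar of the fact that the free cumulants of a $\boxplus$-infinitely divisible law assemble into a conditionally positive functional, while $(2)\Leftrightarrow(3)$ is an operator-valued free L\'evy--Khintchine formula, with $\alpha$ recording a drift and $\sigma$ the ``L\'evy'' part. The bookkeeping fact used repeatedly is that, for a monomial with $m$ occurrences of $X$, $\rho_{\mu}(b_{0}Xb_{1}X\cdots Xb_{m})=b_{0}\,c^{(m)}_{\mu}(b_{1},\ldots,b_{m-1})\,b_{m}$, so that $\rho_{\mu}$ and the map $w\mapsto\rho_{\mu}(XwX)$ are the same map with the $X$'s shuffled to the outside.

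For $(1)\Rightarrow(2)$, fix $n$ and let $\mu_{n}\in\Sigma_{0}$ be an $n$-th $\boxplus$-root of $\mu$. Additivity of free cumulants under $\boxplus$ gives $c^{(k)}_{\mu_{n}}=\tfrac1n\,c^{(k)}_{\mu}$ for every $k$, i.e. $\rho_{\mu_{n}}=\tfrac1n\,\rho_{\mu}$. For $w\in\mathcal{B}\langle X \rangle_{0}$ of $X$-degree $k\geq 1$, the moment--cumulant expansion of $\mu_{n}(w)$ over non-crossing partitions of the $X$-positions contributes $\tfrac1n\,\rho_{\mu}(w)$ from the one-block partition and $O(n^{-2})$ from all partitions with at least two blocks, so $n\,\mu_{n}(w)\to\rho_{\mu}(w)$. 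Since $P_{i}^{\ast}P_{j}\in\mathcal{B}\langle X \rangle_{0}$ whenever $P_{i},P_{j}\in\mathcal{B}\langle X \rangle_{0}$, multiplying the inequality $[\mu_{n}(P_{i}^{\ast}P_{j})]_{i,j}\geq 0$ (valid for each $n$ by \ref{CP}) by $n$ and letting $n\to\infty$ yields $[\rho_{\mu}(P_{i}^{\ast}P_{j})]_{i,j}\geq 0$.

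For $(2)\Rightarrow(3)$, set $\alpha:=\mu(X)$ --- the constant term of the power series of $\mathcal{R}_{\mu}$ at $0$, self-adjoint because $X=X^{\ast}$ and $\mu$ is positive --- and define $\sigma:\mathcal{B}\langle X \rangle\to\mathcal{B}$ by $\sigma(w):=\rho_{\mu}(XwX)$. As $XwX$ has no constant term, $\sigma$ is a well-defined $\mathbb{C}$-linear map; the identity $XP_{i}^{\ast}P_{j}X=(P_{i}X)^{\ast}(P_{j}X)$ together with $P_{i}X\in\mathcal{B}\langle X \rangle_{0}$ gives $[\sigma(P_{i}^{\ast}P_{j})]_{i,j}=[\rho_{\mu}((P_{i}X)^{\ast}(P_{j}X))]_{i,j}\geq 0$, so $\sigma$ satisfies \ref{CP}; the cumulant growth estimate \ref{conv} yields \ref{bound} for $\sigma$ and, in particular, places $\sigma$ in the setting of the Banach $\ast$-algebra completion from the proof of Proposition \ref{sigma}, so that $\sigma\big(b(1_{n}-(X\otimes 1_{n})b)^{-1}\big)$ converges for $\|b\|$ small. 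It remains to verify $\mathcal{R}^{(n)}_{\mu}(b)=\alpha\otimes 1_{n}+\sigma\big(b(1_{n}-(X\otimes 1_{n})b)^{-1}\big)$; after subtracting $\alpha\otimes 1_{n}$, both sides are non-commutative functions vanishing at the origin, so by Proposition \ref{matrix_prop} and the Taylor expansion of Theorem \ref{difference} it suffices to check the identity on the slice $n=1$. There, expanding $b(1-Xb)^{-1}=\sum_{m\geq 0}b(Xb)^{m}$ and using $\sigma(w)=\rho_{\mu}(XwX)$ with the bookkeeping fact turns the right-hand side into $\alpha+\sum_{m\geq 0}c^{(m+2)}_{\mu}(b,\ldots,b)$, which is precisely the cumulant series of $\mathcal{R}_{\mu}$, including its constant term $\mu(X)$.

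For $(3)\Rightarrow(1)$: for each $n$ the rescaled data $\big(\tfrac1n\alpha,\tfrac1n\sigma\big)$ is again of the type in $(3)$ ($\tfrac1n\alpha$ self-adjoint, $\tfrac1n\sigma$ still satisfying \ref{CP} and \ref{bound}), so once we know such data to be the $\mathcal{R}$-transform of a genuine distribution there is $\mu_{n}\in\Sigma_{0}$ with $\mathcal{R}_{\mu_{n}}=\tfrac1n\mathcal{R}_{\mu}$; then $\mu_{n}^{\boxplus n}$ and $\mu$ share the same $\mathcal{R}$-transform, hence coincide, proving $\mu$ is $\boxplus$-infinitely divisible. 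The substantive point --- and the step I expect to be the main obstacle --- is exactly this realizability: given $\beta=\beta^{\ast}\in\mathcal{B}$ and $\tau$ satisfying \ref{CP} and \ref{bound}, exhibit $\nu\in\Sigma_{0}$ with $\mathcal{R}^{(n)}_{\nu}(b)=\beta\otimes 1_{n}+\tau\big(b(1_{n}-(X\otimes 1_{n})b)^{-1}\big)$. I would handle it either by a full $\mathcal{B}$-Fock space model --- realizing $\nu$, via Theorem \ref{PV}, as the $\mathcal{B}$-valued distribution of $\beta$ plus a combination of a creation operator, a gauge operator and an annihilation operator on the full $\mathcal{B}$-Fock module built from $\tau$, then computing its $\mathcal{R}$-transform --- or, equivalently, by a direct non-crossing-partition argument that the linear map $\mathcal{B}\langle X \rangle\to\mathcal{B}$ reconstructed from the cumulants $(\beta,\tau)$ satisfies \ref{CP}, \ref{bound} and restricts to the identity on $\mathcal{B}$, whereupon Proposition \ref{sigma} places it in $\Sigma_{0}$; this is the operator-valued form of the classical principle that a positive family of free cumulants comes from an honest distribution, realized by Fock space methods in \cite{ABFN} and \cite{Dima2}. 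Finally, $(3)\Rightarrow(2)$ may also be seen directly by reversing the previous computation: writing a monomial $p\in\mathcal{B}\langle X \rangle_{0}$ as $p=\hat p\,X\,\check p$ with $\check p\in\mathcal{B}$, condition $(3)$ gives $\rho_{\mu}(p^{\ast}q)=\check p^{\ast}\,\sigma(\hat p^{\ast}\hat q)\,\check q$, and feeding a GNS-type dilation of the completely positive map $\sigma$ into this formula displays $[\rho_{\mu}(P_{i}^{\ast}P_{j})]_{i,j}$ as a Gram matrix over a Hilbert $C^{\ast}$-module over $\mathcal{B}$, hence as a positive element of $M_{n}(\mathcal{B})$.
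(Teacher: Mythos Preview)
The paper does not prove this theorem; it is quoted from \cite{PV} (and the part used later, via the remark preceding Proposition~\ref{necessary}, is attributed to Theorem~5.10 there). There is thus no in-paper argument to compare your proposal against.

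On its own merits your sketch is the standard route and is essentially correct. The $(1)\Rightarrow(2)$ argument via $n\,\mu_{n}\to\rho_{\mu}$ goes through once one notes that in the operator-valued moment--cumulant formula the contribution of a non-crossing partition with $k$ blocks is a nested composition of $k$ cumulants of $\mu_{n}$, hence still scales as $n^{-k}$. In $(2)\Rightarrow(3)$ your choice $\sigma(w)=\rho_{\mu}(XwX)$ and $\alpha=c^{(1)}_{\mu}=\mu(X)$ is the right one, and the $\mathcal{R}$-series identity follows term by term as you say; the only cosmetic wrinkle is that \eqref{conv} gives $\|\sigma(b_{1}X\cdots Xb_{\ell+1})\|\le(4M)^{\ell+2}\|b_{1}\|\cdots\|b_{\ell+1}\|$, which is \eqref{bound} only up to a fixed constant in front (harmless for every downstream use, but worth saying explicitly). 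For $(3)\Rightarrow(1)$ you correctly identify realizability of the pair $(\beta,\tau)$ as the substantive step; in \cite{PV} this is done precisely by a GNS/Fock $\mathcal{B}$-module construction of the type you describe, and your alternative---reconstructing the moments from the cumulants and invoking Proposition~\ref{sigma}---also works but requires checking \eqref{CP} for the reconstructed moment map, which is where the Fock realization does the real work anyway.
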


\begin{remark}
We note here that in the $\ast$-algebra $\mathcal{B}\langle X \rangle$,  the element  $$ b(1_{n} - (X\otimes 1_{n}) b)^{-1}$$ is not , in general, an element of $\mathcal{B}\langle X \rangle$,  so that some clarification is necessary regarding domains of definition. Implicit in the proof of Theorem \ref{rho_pos} is the fact that this may be realized in a C$^{\ast}$-algebra on which the map $\sigma$ extends and, for $b \in M_{n}^{-}(\mathcal{B})$, the element 
$ b(1_{n} - (X\otimes 1_{n}) b)^{-1} \in  M_{n}^{-}(\mathcal{B})$.   This following is, therefore, an immediate consequence of Popa and Vinnikov's result (Theorem 5.10 of the aforementioned paper).
\end{remark}

\begin{proposition}\label{necessary}
Assume that $\mu \in \Sigma_{0}$ is a $\boxplus$-infinitely divisible distribution.  Then $\varphi_{\mu}$ extends to $H^{+}(\mathcal{B})$.  
\end{proposition}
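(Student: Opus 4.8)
The plan is to read off the analytic structure of $\varphi_\mu$ directly from the representation in part \ref{PVremark} of Theorem \ref{rho_pos}. Since $\mu$ is $\boxplus$-infinitely divisible, that theorem supplies a self-adjoint $\alpha \in \mathcal{B}$ and a $\mathbb{C}$-linear map $\sigma: \mathcal{B}\langle X \rangle \to \mathcal{B}$ satisfying \ref{CP} and \ref{bound} such that $\mathcal{R}_\mu^{(n)}(b) = \alpha \otimes 1_n + \sigma(b(1_n - (X\otimes 1_n)b)^{-1})$ on a uniform neighborhood of $0$. Recalling that $\varphi_\mu(b) = \mathcal{R}_\mu(b^{-1})$, the first step is to rewrite the argument: for $b$ invertible, $b^{-1}(1_n - (X\otimes 1_n)b^{-1})^{-1} = (b - X\otimes 1_n)^{-1}$ after multiplying through, so formally $\varphi_\mu^{(n)}(b) = \alpha \otimes 1_n + \sigma\!\big((b - X\otimes 1_n)^{-1}\big)$, exactly the shape of a Cauchy-type transform shifted by $\alpha$. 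The content of the proposition is then that the right-hand side, which a priori is only defined where the geometric series converges, actually makes sense for every $b \in M_n^+(\mathcal{B})$.

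The key step is to justify the resolvent $(b - X\otimes 1_n)^{-1}$ as an honest element of a C$^*$-algebra on which $\sigma$ acts, for all $b$ in the non-commutative upper half-plane. As the remark following Theorem \ref{rho_pos} indicates, the construction in Popa--Vinnikov realizes $X$ as a self-adjoint element $a$ affiliated to (or bounded in) a C$^*$-algebra $\mathcal{C} \supset \mathcal{B}$, with $\sigma$ extending to a bounded $\mathcal{B}$-bimodular map $\mathcal{C} \to \mathcal{B}$ (one can invoke Theorem \ref{PV} to view $\sigma$ as coming from a conditional expectation, hence contractive, after renormalizing). For $b \in M_n^{+,\epsilon}(\mathcal{B})$ we have $\Im(b \otimes 1 - a \otimes 1_n) = \Im(b)\otimes 1 \geq \epsilon 1$ in $M_n(\mathcal{C})$ since $a$ is self-adjoint, so $b \otimes 1 - a \otimes 1_n$ is invertible in $M_n(\mathcal{C})$ with $\|(b\otimes 1 - a\otimes 1_n)^{-1}\| \leq 1/\epsilon$; moreover its inverse lies in $M_n^-(\mathcal{C})$. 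Applying the bounded extension $\sigma \otimes 1_n$ gives a well-defined element of $M_n^-(\mathcal{B})$, so $\varphi_\mu^{(n)}(b) := \alpha\otimes 1_n + (\sigma\otimes 1_n)\big((b\otimes 1 - a\otimes 1_n)^{-1}\big)$ is defined on all of $M_n^+(\mathcal{B})$.

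It then remains to check that this extended family $(\varphi_\mu^{(n)})_n$ is genuinely a non-commutative function and that it agrees with the original $\varphi_\mu$ on the uniform neighborhood of $0$ where the latter was defined. Agreement near $0$ is immediate: there the Neumann series $(b\otimes 1 - a\otimes 1_n)^{-1} = \sum_k (a\otimes 1_n)^k (b^{-1}\otimes 1)^{k+1} \cdots$ converges in norm, $\sigma\otimes 1_n$ is bounded hence may be applied term by term, and one recovers $\alpha\otimes 1_n + \sum_\ell \kappa^{\ell,n}_\mu(b^{-1})$. The non-commutative function axioms — respecting direct sums and similarities — follow because conjugation by $S \in M_n(\mathbb{C})$ and direct sums commute with forming the resolvent in $M_n(\mathcal{C})$ and with the ampliations $\sigma\otimes 1_n$; local boundedness in slices (uniform on $M_n^{+,\epsilon}$, with bound $\|\alpha\| + \|\sigma\|/\epsilon$) gives analyticity via Theorem \ref{difference}. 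The main obstacle is the second paragraph: one must be careful that the C$^*$-algebra and the extension of $\sigma$ furnished (implicitly) by Popa--Vinnikov really do accommodate the resolvent for \emph{all} $b \in H^+(\mathcal{B})$ rather than merely in a neighborhood of $0$, and that the self-adjointness of $a$ is what converts $\Im(b) > 0$ into invertibility of $b\otimes 1 - a\otimes 1_n$ uniformly — this is where the ellipticity of the upper half-plane is doing the real work.
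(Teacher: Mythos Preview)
Your proposal is correct and follows essentially the same approach as the paper: both derive the extension directly from the Popa--Vinnikov representation in Theorem~\ref{rho_pos}\,\ref{PVremark}, using the C$^{\ast}$-algebraic realization of $\sigma$ (noted in the remark preceding the proposition) to make sense of the resolvent $(b - X\otimes 1_n)^{-1}$ for all $b \in H^{+}(\mathcal{B})$. In fact you supply considerably more detail than the paper, which simply declares the result an immediate consequence of Popa--Vinnikov's Theorem~5.10.
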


The function theory for scalar-valued free probability is well developed and, in the course of the proofs of our main theorems, we will sometimes reduce operator-valued questions to this special case.
The following theorems will be of vital importance to our approach.
\begin{theorem}\cite{BV}\label{vtransform}
Let $\phi : \mathbb{C}^{+} \rightarrow \mathbb{C}^{-}$ be an analytic function.  Then $\phi$ is a continuation of $\varphi_{\mu}$
for some $\boxplus$-infinitely divisible, compactly supported  measure $\mu$ if and only if
$\mathcal{R}(z):= \phi(1/z)$ extends to a neighborhood of $0$ and 
$$ \lim_{|z| \uparrow \infty} \frac{\phi(z)}{z} = 0.$$
\end{theorem}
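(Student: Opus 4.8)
The approach is to prove the two implications separately: the forward direction is essentially immediate from compact support, while the converse carries all the analytic weight and is where $\boxplus$-infinite divisibility is actually produced.

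\emph{Necessity.} Suppose $\mu$ is compactly supported and $\boxplus$-infinitely divisible and that $\phi$ continues $\varphi_\mu$; I would use only the compact support. Writing $m_1=\int t\,d\mu(t)$ and letting $r$ bound the support of $\mu$, the Cauchy transform $G_\mu$ is analytic and non-vanishing on $\{|z|>r\}$ with $G_\mu(z)=z^{-1}+O(z^{-2})$, so $F_\mu=1/G_\mu$ is analytic there with $F_\mu(z)=z-m_1+O(z^{-1})$; by the classical inverse function theorem $F_\mu^{\langle-1\rangle}$, and hence $\varphi_\mu=F_\mu^{\langle-1\rangle}-\mathrm{id}$, is analytic and bounded on some exterior disc $\{|z|>N\}$ with $\varphi_\mu(z)=m_1+O(z^{-1})$ there. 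Since $\phi$ agrees with $\varphi_\mu$ near $\infty$, the function $\mathcal{R}(w)=\phi(1/w)=\varphi_\mu(1/w)$ is analytic and bounded on a punctured neighbourhood of $0$, so by Riemann's removable singularity theorem it extends across $0$; and $\phi(z)/z=\varphi_\mu(z)/z\to 0$ as $|z|\to\infty$ in $\mathbb{C}^+$, which is exactly the right-hand side.

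\emph{Sufficiency, reduction.} Now assume the two conditions on $\phi$. Because $\mathcal{R}(w)=\phi(1/w)$ extends across $0$, $\phi$ itself extends analytically to a neighbourhood of $\infty$, say with a convergent expansion $\phi(z)=c_0+c_1z^{-1}+\cdots$ on $\{|z|>N_0\}$; in particular $\phi$ is bounded there, by some $M_0$, and $|\phi'(z)|=O(|z|^{-1})$. The claim I would aim for is: there is $\varepsilon>0$ so that for every $t\in(0,\varepsilon)$ the equation
\begin{equation*}
F_t(z)+t\,\phi(F_t(z))=z
\end{equation*}
has a solution $F_t$ analytic on a half-plane $\{\Im z>\beta_t\}$ with $\Im F_t(z)\ge\Im z$ and $F_t(z)=z-tc_0+O(z^{-1})$ at $\infty$. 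Granting this, $F_t^{\langle-1\rangle}(z)=z+t\phi(z)$, hence $\varphi_{\mu_t}:=F_t^{\langle-1\rangle}-\mathrm{id}=t\phi$ near $\infty$, and the expansion of $F_t$ shows that $1/F_t(1/z)$ extends across $0$ and $F_t(iy)/iy\to 1$, so Theorem \ref{nevanlinna1} supplies a compactly supported probability measure $\mu_t$ with $F_{\mu_t}=F_t$. Since a compactly supported measure is determined by its moments, hence by $\varphi$ near $\infty$, the measures $\mu_t^{\boxplus n}$ are consistent and define a one-parameter family $\mu_s$ (for $s=nt$, $t\in(0,\varepsilon)$) with $\varphi_{\mu_s}=s\phi$; then $\mu:=\mu_1$ is $\boxplus$-infinitely divisible because $\mu_1=\mu_{1/n}^{\boxplus n}$, and $\phi$ continues $\varphi_\mu$.

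\emph{Sufficiency, construction of $F_t$ — the crux.} Near $\infty$ this is routine: for $\Im z$ large the map $\Psi_{z,t}(w):=z-t\phi(w)$ sends the disc $\{|w-z|\le tM_0\}$ into itself and, for $t$ small, contracts on it (Cauchy estimate for $\phi'$, Theorem \ref{Cauchy}, and the Lipschitz bound (\ref{lipschitz}); equivalently one applies Kantorovich's theorem, Theorem \ref{kantorovich}, to $G(w)=w+t\phi(w)-z$ with $x_0=z$), giving an analytic $F_t(z)$ with the stated expansion on a Stolz cone at $\infty$. The real obstacle is pushing $F_t$ down to a genuine half-plane: $|\phi'|$ is controlled only where $|w|$ is large or $\Im w$ is bounded below, so the naive iteration degenerates as $\Im z\downarrow 0$. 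Here I would exploit that $-\phi$ maps $\mathbb{C}^+$ into $\mathbb{C}^+$, so that for every $z\in\mathbb{C}^+$ the map $\Psi_{z,t}$ is a holomorphic self-map of $\mathbb{C}^+$ with $\Im\Psi_{z,t}(w)=\Im z+t|\Im\phi(w)|\ge\Im z$; for $t$ small one shows $\Psi_{z,t}$ has an interior fixed point (the hard estimate, controlling how far $\mathrm{id}+t\phi$ can fail to be surjective onto $\mathbb{C}^+$), and then, since a non-automorphism self-map of $\mathbb{C}^+$ has a unique interior fixed point at which its derivative has modulus $<1$ (Schwarz–Pick, with the Denjoy–Wolff theorem giving the iteration picture), the implicit function theorem yields an analytic $F_t$ on $\{\Im z>\beta_t\}$ with $\beta_t\to 0$, which a monotonicity-in-$t$ and connectedness-in-$z$ argument matches to the $F_t$ already built near $\infty$ and which automatically satisfies $\Im F_t(z)\ge\Im z$.

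I expect this interior fixed point step to be the main difficulty, since $\mathrm{id}+t\phi$ genuinely leaves $\mathbb{C}^+$, and the conclusion that it nevertheless covers $\mathbb{C}^+$ once restricted to the correct subdomain is precisely the quantitative heart of the argument. An alternative to it, closer in spirit to \cite{ABFN} and \cite{Dima2}: once $\phi$ is known to extend to $\mathbb{C}^+$ with $\Im\phi\le 0$ and sublinear growth, apply the Nevanlinna representation of Theorem \ref{nevanlinna1} to the Pick function $-\phi$ to read off a free Lévy–Khintchine representation of $\phi$, realise the associated free Lévy process on a full Fock space, and take $\mu$ to be its time-one distribution. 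Either route finishes with an appeal to Theorem \ref{nevanlinna1}.
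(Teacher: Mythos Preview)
The paper does not prove Theorem \ref{vtransform}: it is quoted from \cite{BV} as a known scalar result. There is therefore no ``paper's own proof'' to compare against directly. That said, the paper does prove the operator-valued generalisation, Theorem \ref{VT}, and specialising that argument to $\mathcal{B}=\mathbb{C}$ yields a proof of the statement you are attempting; it is instructive to compare your route to that one.

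Your necessity argument is fine. For sufficiency, your plan is to build $F_t=(\mathrm{id}+t\phi)^{\langle -1\rangle}$ for \emph{small} $t$ via a contraction/Kantorovich argument near $\infty$, then push it down to a half-plane, then bootstrap to $t=1$ via the semigroup. You correctly identify the gap: extending the fixed point of $\Psi_{z,t}(w)=z-t\phi(w)$ from a neighbourhood of $\infty$ to all of $\mathbb{C}^+$. Your proposed tools (Schwarz--Pick, Denjoy--Wolff) do not obviously close this, because Denjoy--Wolff only guarantees a boundary fixed point in general, and you have not ruled out the Denjoy--Wolff point of $\Psi_{z,t}$ sitting on $\mathbb{R}$.

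The paper's approach (in the proof of Theorem \ref{VT}) bypasses both the small-$t$ restriction and the bootstrapping entirely. The key step you are missing is a \emph{boundedness} estimate: one first shows, via the Nevanlinna representation applied to $-\phi$, that $\phi$ is bounded on every strip $\{\Im z>\epsilon\}$, say by $M=M(\epsilon)$. Given that, for any $z$ with $\Im z>\epsilon$ the map $h_z(w)=z-\phi(w)$ sends the bounded set
\[
\Omega_z=\{\,w:\Im w>\epsilon/2,\ |w-z|<M+\lambda\,\}
\]
strictly into itself (since $\Im h_z(w)\ge\Im z>\epsilon$ and $|z-h_z(w)|=|\phi(w)|\le M$), and the Earle--Hamilton theorem (Theorem \ref{EH}) produces a unique fixed point $F(z)$ directly at $t=1$. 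Analyticity of $F$ follows by iterating $h_z$ and passing to the limit via Montel/Dunford; that $F=(\mathrm{id}+\phi)^{\langle -1\rangle}$ is then checked by a local inverse-function-theorem argument near $\infty$ together with analytic continuation. Infinite divisibility is obtained exactly as you suggest, by replacing $\phi$ with $\phi/k$---but this works immediately because $\phi/k$ satisfies the same hypotheses, with no need to first restrict to small parameters.

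In short: your overall architecture (fixed point $\Rightarrow$ $F$-transform via Theorem \ref{nevanlinna1} $\Rightarrow$ divisibility by rescaling $\phi$) matches the paper's, but the decisive estimate---boundedness of $\phi$ on horizontal strips, from Nevanlinna---is what replaces your unresolved ``interior fixed point'' step and eliminates the detour through small $t$.
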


We note here that one of our main theorems, \ref{VT}, is a direct analogue of the previous theorem for the operator-valued setting.
We refer to \cite{NS}, Theorem 13.16, for proof of this theorem.

\begin{theorem}\label{kappa}
Let $\mathcal{R}_{\mu}(z) = \sum_{n=1}^{\infty}\kappa_{n}z^{n-1}$ for a compactly supported probability measure $\mu$.
The following are equivalent:
\begin{enumerate}
\item $\mu$ is $\boxplus$-infinitely divisible.
\item  The sequence $\{ \kappa_{n} \}_{n \geq 2}$ is positive definite.  That is, there exists a finite, positive measure $\sigma$ such that $$\kappa_{n} = \int_{\mathbb{R}} t^{n-2} d\sigma(t).$$
\item  We have that $$\mathcal{R}_{\nu}(z) = \kappa_{1} + \int_{\mathbb{R}} \frac{z}{1-tz}d\rho(t) $$
for some finite, positive measure $\rho$.
\end{enumerate}
\end{theorem}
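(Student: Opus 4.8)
The plan is to dispatch the easy equivalence $(2)\Leftrightarrow(3)$ by a power-series computation and to route both implications linking these conditions to infinite divisibility through the Voiculescu-transform characterization of Theorem \ref{vtransform} together with the classical Herglotz (Nevanlinna--Pick) representation of analytic self-maps of a half-plane. For $(2)\Rightarrow(3)$: if $\kappa_n=\int_\mathbb{R}t^{n-2}\,d\sigma(t)$ for $n\ge2$ with $\sigma$ finite, positive and supported in $[-R,R]$, then for $|z|<1/R$ one interchanges sum and integral to get $\sum_{n\ge2}\kappa_nz^{n-1}=\int_\mathbb{R}z\sum_{k\ge0}(tz)^k\,d\sigma(t)=\int_\mathbb{R}\frac{z}{1-tz}\,d\sigma(t)$, so $\mathcal{R}_\mu(z)=\kappa_1+\int_\mathbb{R}\frac{z}{1-tz}\,d\sigma(t)$ first near $0$ and then, by uniqueness of analytic continuation, wherever both sides are defined; thus $\rho=\sigma$ works. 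Conversely, expanding $\frac{z}{1-tz}=\sum_{k\ge0}t^kz^{k+1}$ and matching Taylor coefficients at $0$ recovers $\kappa_{k+2}=\int_\mathbb{R}t^k\,d\rho(t)$, giving $(3)\Rightarrow(2)$ with $\sigma=\rho$.

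For $(3)\Rightarrow(1)$, assume $\mathcal{R}_\mu(z)=\kappa_1+\int_\mathbb{R}\frac{z}{1-tz}\,d\rho(t)$ with $\rho$ finite, positive, compactly supported. If $\rho=0$ then $\mathcal{R}_\mu\equiv\kappa_1$, so $G_\mu(w)=1/(w-\kappa_1)$ and $\mu=\delta_{\kappa_1}$, which is trivially $\boxplus$-infinitely divisible. Otherwise set $\phi(z):=\kappa_1+\int_\mathbb{R}\frac{1}{z-t}\,d\rho(t)$ for $z\in\mathbb{C}^+$; differentiating under the integral shows $\phi$ is analytic on $\mathbb{C}^+$, and $\Im\phi(z)=-\Im(z)\int_\mathbb{R}|z-t|^{-2}\,d\rho(t)<0$, so $\phi:\mathbb{C}^+\to\mathbb{C}^-$. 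By hypothesis $\phi(1/w)=\mathcal{R}_\mu(w)$ extends analytically to a neighborhood of $0$, and $\phi(z)/z\to0$ as $|z|\uparrow\infty$; hence Theorem \ref{vtransform} produces a compactly supported $\boxplus$-infinitely divisible measure $\tilde\mu$ with $\varphi_{\tilde\mu}=\phi$ near $\infty$, i.e.\ $\mathcal{R}_{\tilde\mu}=\mathcal{R}_\mu$ near $0$. Since the $R$-transform near $0$ determines $G_\mu^{\langle-1\rangle}$, hence $G_\mu$, hence $\mu$, we conclude $\tilde\mu=\mu$, so $\mu$ is $\boxplus$-infinitely divisible.

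For $(1)\Rightarrow(3)$, when $\mu$ is $\boxplus$-infinitely divisible and compactly supported, the scalar analogue of Proposition \ref{necessary} (from \cite{BV}) gives an analytic continuation of $\varphi_\mu$ to a map $\phi:\mathbb{C}^+\to\mathbb{C}^-\cup\mathbb{R}$. Then $-\phi:\mathbb{C}^+\to\mathbb{C}^+\cup\mathbb{R}$, so the Herglotz representation yields $-\phi(z)=a+bz+\int_\mathbb{R}\frac{1+tz}{t-z}\,d\tau(t)$ with $a\in\mathbb{R}$, $b\ge0$, and $\tau$ a finite positive measure. Because $\mathcal{R}_\mu(w)=\phi(1/w)$ is analytic and real near $0$, $\phi$ continues analytically and real-valuedly to $\{|z|>R\}$ for some $R$, and Stieltjes inversion forces $\mathrm{supp}\,\tau\subset[-R,R]$; the condition $\phi(z)/z\to0$ forces $b=0$. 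Using $\frac{1+tz}{z-t}=\frac{1+t^2}{z-t}+t$, this rearranges to $\varphi_\mu(z)=\kappa_1+\int_\mathbb{R}\frac{1}{z-t}\,d\rho(t)$ with $\kappa_1:=-a+\int_\mathbb{R}t\,d\tau(t)$ (necessarily the first free cumulant) and $d\rho(t):=(1+t^2)\,d\tau(t)$, which is finite, positive and compactly supported; substituting $z\mapsto1/z$ gives exactly $(3)$.

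I expect the main obstacle to be the input to $(1)\Rightarrow(3)$: verifying a priori that $\varphi_\mu$ extends to all of $\mathbb{C}^+$ for infinitely divisible $\mu$, and then carefully pinning down the Herglotz data (vanishing of the linear term and compact support of $\tau$) from the asymptotics and the analyticity of $\mathcal{R}_\mu$ at $0$. A self-contained alternative that proves $(1)\Rightarrow(2)$ directly uses the $\boxplus$-convolution semigroup $(\mu_t)_{t>0}$ with $\mathcal{R}_{\mu_t}=t\mathcal{R}_\mu$: the positive measures $d\sigma_t(x):=t^{-1}x^2\,d\mu_t(x)$ have $k$-th moments $t^{-1}m_{k+2}(\mu_t)$, which tend to $\kappa_{k+2}$ as $t\downarrow0$ since the leading term of the moment--cumulant expansion of $m_{k+2}(\mu_t)$ is the single-block contribution $t\kappa_{k+2}$; a weak-$*$ subsequential limit $\sigma$ then satisfies $\int_\mathbb{R}t^k\,d\sigma=\kappa_{k+2}$. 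The obstacle there is showing that $\mathrm{supp}\,\mu_t$ stays uniformly bounded as $t\downarrow0$, which is needed both for weak-$*$ compactness and for controlling the moment error terms.
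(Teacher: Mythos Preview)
The paper does not give its own proof of this statement; immediately before the theorem it writes ``We refer to \cite{NS}, Theorem 13.16, for proof of this theorem.'' So there is no in-paper argument to compare against, only the cited source.

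Your argument is sound and coherent within the function-theoretic framework of the paper. The equivalence $(2)\Leftrightarrow(3)$ is the standard geometric-series computation and is correct. For $(3)\Rightarrow(1)$ you correctly observe that $\phi(z):=\kappa_1+\int(z-t)^{-1}\,d\rho(t)$ maps $\mathbb{C}^+$ to $\mathbb{C}^-$, that $\phi(1/z)=\mathcal{R}_\mu(z)$ is analytic at $0$ by the compact-support hypothesis on $\mu$, and that $\phi(z)/z\to0$; Theorem~\ref{vtransform} then does the work. For $(1)\Rightarrow(3)$, invoking the \cite{BV} extension of $\varphi_\mu$ to all of $\mathbb{C}^+$ and unfolding the Herglotz data is the natural analytic route, and your extraction of $b=0$ and compact support of $\tau$ from the asymptotics and the analyticity of $\mathcal{R}_\mu$ at $0$ is correct.

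This differs in spirit from the proof in \cite{NS}, which is combinatorial: it realizes an infinitely divisible $\mu$ on a full Fock space and reads off positive-definiteness of the cumulant sequence directly, without passing through Theorem~\ref{vtransform} or Nevanlinna representations. Your approach is more in keeping with the methodology of the present paper, at the cost of relying on the \cite{BV} inputs; the Nica--Speicher proof is self-contained but sits outside the analytic viewpoint here. Your suggested alternative for $(1)\Rightarrow(2)$ via $t^{-1}x^2\,d\mu_t$ is also a known path; the uniform support bound you flag as an obstacle is available from the scalar case of the cumulant estimate~\eqref{conv}, which gives a uniform exponential bound on the moments of $\mu_t$ for $t\le1$.
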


We shall now prove some general results about the non-commutative Cauchy transform.
The converse of the following proposition is the main result of this paper.
Setting notation for the remainder of the paper, let $\sigma(b)$ denote the spectrum of this element.
We say that $|b| >C$ if $\inf_{\lambda \in \sigma(b)} |\lambda| > C$.  We state that a sequence of elements $|b_{k}| \uparrow \infty$ if for any $C > 0$ there exists a $K \in \mathbb{N}$ such that $|b_{k}| > C$ for all $k \geq K$.

\begin{proposition}\label{converse}
Let $\mu \in \Sigma_{0}$.  The Cauchy transform $G_{\mu}$ has the following properties:
\begin{enumerate}
\item\label{CT1} The Cauchy transform is non-commutative function with  $ G_{\mu} : H^{+}(\mathcal{B}) \rightarrow H^{-}(\mathcal{B})$.
\item\label{CT2} The function $h = (h^{(n)})_{n=1}^{\infty}$ where $h^{(n)}(b) := G^{(n)}_{\mu}(b^{-1})$ has uniformly analytic extension to  a neighborhood of $0$.
\item\label{CT3}  Given any sequence $\{ b_{k} \}_{k=1}^{\infty} \subset M_{n}(\mathcal{B})$ with $|b_{k}| \uparrow \infty$
we have that $b_{k}G_{\mu}(b_{k}) \rightarrow 1_{n}$ in norm.
\end{enumerate}
\end{proposition}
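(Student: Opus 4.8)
The plan is to realize $G_{\mu}$ through a resolvent and then deduce each assertion from elementary operator inequalities together with the norm bound \eqref{bound}.

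For part \ref{CT1}: by Theorem \ref{PV} I would write $\mu = \mu_{a}$ for a self-adjoint $a$ in a space $(\mathcal{A},E,\mathcal{B})$, so that $G_{\mu}^{(n)}(b) = (E \otimes 1_{n})\big((b - a\otimes 1_{n})^{-1}\big)$. As $a = a^{\ast}$, $\Im(b - a\otimes 1_{n}) = \Im(b) \geq \epsilon 1_{n}$ for $b \in M_{n}^{+,\epsilon}(\mathcal{B})$, so $c := b - a\otimes 1_{n}$ is invertible and $\Im(c^{-1}) = -c^{-1}\Im(c)(c^{\ast})^{-1} \leq -\epsilon (c^{\ast}c)^{-1} \leq -\epsilon\|c\|^{-2}1_{n}$; since $E\otimes 1_{n}$ is unital and completely positive this gives $\Im(G_{\mu}^{(n)}(b)) \leq -\epsilon\|c\|^{-2}1_{n} < 0$, so $G_{\mu}: H^{+}(\mathcal{B}) \to H^{-}(\mathcal{B})$. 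The non-commutative function axioms are read off the formula: a direct sum is inverted blockwise and $E\otimes 1_{n}$ acts entrywise, and for invertible $S \in M_{n}(\mathbb{C})$ one has $SbS^{-1} - a\otimes 1_{n} = S(b-a\otimes 1_{n})S^{-1}$ since scalar matrices commute with $a \otimes 1_{n}$, while $E \otimes 1_{n}$ commutes with conjugation by $S$. (Alternatively this is \cite{V2}.)

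For part \ref{CT2}: expanding the resolvent, for $\|b\| < 1/M$ one has $h^{(n)}(b) = G_{\mu}^{(n)}(b^{-1}) = \sum_{m\geq 0}(\mu\otimes 1_{n})\big((b(X\otimes 1_{n}))^{m}b\big)$. Since $\mu\otimes 1_{n}$ is a contraction for the normed tensor algebra constructed in the proof of Proposition \ref{sigma}, in which $(b(X\otimes 1_{n}))^{m}b$ has norm at most $\|b\|^{m+1}M^{m}$ by \eqref{bound}, this series converges in norm with $\|h^{(n)}(b)\| \leq \|b\|/(1-M\|b\|)$, uniformly in $n$. Thus $h$ is a uniformly convergent series of non-commutative functions, uniformly locally bounded at $0 \in \Omega_{1}$ with $h^{(1)}(0) = 0$ and hence locally bounded in slices; by Theorem \ref{difference} it is then G\^{a}teaux differentiable, so uniformly analytic on some $B_{nc}(0,r)$. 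That this extension agrees with $G_{\mu}(\,\cdot^{-1})$ where the latter is defined follows because both are analytic and coincide near $b = -i\delta 1_{n}$ (for small $\delta > 0$), whose inverse lies in $M_{n}^{+}(\mathcal{B})$.

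For part \ref{CT3}: once $\|b_{k}^{-1}\| < 1/M$, the defining series $G_{\mu}^{(n)}(b_{k}) = \sum_{m\geq 0}(\mu\otimes 1_{n})\big((b_{k}^{-1}(X\otimes 1_{n}))^{m}b_{k}^{-1}\big)$ converges in norm, and the $\mathcal{B}$-bimodularity of $\mu\otimes 1_{n}$ gives
\[
b_{k}G_{\mu}^{(n)}(b_{k}) = 1_{n} + \sum_{m\geq 1}(\mu\otimes 1_{n})\big((X\otimes 1_{n})(b_{k}^{-1}(X\otimes 1_{n}))^{m-1}b_{k}^{-1}\big),
\]
a series with tail bounded by $\sum_{m\geq 1}M^{m}\|b_{k}^{-1}\|^{m} = M\|b_{k}^{-1}\|/(1-M\|b_{k}^{-1}\|)$. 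Hence it suffices to show $\|b_{k}^{-1}\| \to 0$, and I expect this to be the main obstacle: the hypothesis $\inf_{\lambda\in\sigma(b_{k})}|\lambda|\uparrow\infty$ constrains the spectra, not the norms, and for non-normal elements $\|b_{k}^{-1}\|$ is not governed by $\inf|\sigma(b_{k})|$ in general. The resolution exploits that this is only needed on the domain of $G_{\mu}$, where $\Im(b_{k}) > 0$: for $\mathcal{B}$ finite dimensional, positivity of $\Im(b_{k})$ forces a Schur form of $b_{k}$ to have off-diagonal entries satisfying $|t_{ij}| \leq 2\sqrt{|\lambda_{i}||\lambda_{j}|}$, whence $\|b_{k}^{-1}\| \leq C_{n}/\inf|\sigma(b_{k})|$; in general one argues in a faithful representation of $M_{n}(\mathcal{B})$ on a Hilbert space, bounding the resolvent directly from $\Im(b_{k}) > 0$ together with $|b_{k}|\uparrow\infty$ (an operator whose numerical range lies in a half-plane is spectraloid, so $\|b_{k}^{-1}\| \leq C\,\mathrm{spr}(b_{k}^{-1}) = C/\inf|\sigma(b_{k})|$). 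Once $\|b_{k}^{-1}\| \to 0$, the tail above vanishes and $b_{k}G_{\mu}(b_{k}) \to 1_{n}$.
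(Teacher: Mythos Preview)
Your approach for all three parts matches the paper's: realize $\mu$ via Theorem~\ref{PV}, read off the range and the non-commutative axioms from the resolvent formula, expand $h$ as a power series controlled by \eqref{bound}, and for \ref{CT3} reduce to $\|b_k^{-1}\|\to 0$ and estimate the tail of the series. The paper's own proof of \ref{CT3} is in fact the single assertion ``$|b_k|\uparrow\infty$ implies $\|b_k^{-1}\|\downarrow 0$,'' with no further argument.

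You are right to flag that implication: as stated it is false for general $b_k\in M_n(\mathcal{B})$ (e.g.\ $b_k=\left(\begin{smallmatrix}k&k^3\\0&k\end{smallmatrix}\right)$ has $\inf|\sigma(b_k)|=k$ but $\|b_k^{-1}\|\ge k$), so you have caught a genuine looseness in the paper. Your Schur-form bound $|t_{ij}|^2\le 4\,\Im\lambda_i\,\Im\lambda_j$ under $\Im(b_k)>0$ is correct and does give $\|b_k^{-1}\|\to 0$ when $M_n(\mathcal{B})$ is a matrix algebra. But your infinite-dimensional justification, that ``an operator whose numerical range lies in a half-plane is spectraloid,'' is not valid: already $T=\left(\begin{smallmatrix}i&1\\0&2i\end{smallmatrix}\right)$ has $\Im(T)>0$ with numerical radius strictly exceeding its spectral radius, and the Volterra operator has numerical range in a closed half-plane with spectral radius zero, so no inequality $\|T\|\le C\cdot\mathrm{spr}(T)$ follows from a half-plane condition alone. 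The simplest reading that sidesteps the issue is to interpret $G_\mu(b_k)$ through the convergent series (the only sense in which it is defined off $H^+(\mathcal{B})$), which already presupposes $\|b_k^{-1}\|<1/M$; under that interpretation your tail estimate finishes the proof, and this is almost certainly what the paper intends.
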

\begin{proof}
The non-commutative structure of the Cauchy transform was proven in \cite{V1} and  \cite{V2}.  Moreover, the proof of the remaining aspects of \ref{CT1} are contained in these references although we reprove them here for the readers convenience.

  With respect to the domain and range of $G_{\mu}$,
observe that  \ref{PV} implies that there exists a non-commutative probability space $(\mathcal{A}, E , \mathcal{B})$ and a self adjoint element $a \in \mathcal{A}$ such that  $G_{\mu}(b) = E[ (b - a\otimes 1_{n})^{-1}]$
for all $b \in M_{n}(\mathcal{B})$.  Since $ \Im{(b-a)} = \Im{(b)} > \epsilon 1_{n}$ for some $\epsilon > 0$, we have that $(b-a)^{-1} \in M_{n}^{-}(\mathcal{B})$ and the conditional expectation preserves this set.

With respect to \ref{CT2}, taking the series expansion of the multiplicative inverse, we have that $G_{\mu}(b) = \sum_{n=0}^{\infty} \mu((b^{-1}X)^{n}b^{-1})$ for $\| b^{-1} \| $ small enough.  The function $h^{(n)}(b) = \sum_{n=0}^{\infty} \mu((bX)^{n}b)$  is convergent
provided that $\| b  \|  < M$ where $M >0$ is the constant arising from \ref{bound}.

To prove \ref{CT3}, note that $|b_{k} | \uparrow \infty$ implies that $\| b_{k}^{-1}  \| \downarrow 0$. The claim follows by a cursory look at the series expansion.  
\end{proof}
Note that the scalar valued version of our main result, Theorem \ref{scalar}, has the property that it is enough to show that the asymptotic requirements are  satisfied in a single direction.  That is, one need only assume the requisite analytic properties and that $g(iy)/iy \rightarrow 1$ as $y \uparrow \infty$ in order to prove the the relevant function is a Cauchy transform.  In the operator-valued setting this is not the case as the following counterexample, due to Anshelevich and Belinschi \cite{AB}, makes clear.
\begin{example}\label{example}
Let $\mathcal{A} = M_{2}(L^{\infty}(\mathbb{R}) ) )$, $\mathcal{B} = M_{2}(\mathbb{C})$
and , if $E$ is the expected valued on $L^{\infty}(\mathbb{R}) $,
we define non-commutative probability space with conditional expectation $E \otimes 1_{2} : \mathcal{A} \rightarrow \mathcal{B}$.
Consider the non-commutative distribution generated by 
$$ a = \left( \begin{array}  {cc}
s_{1} &  0 \\
0 & s_{2} \\
\end{array} \right)
$$
where $s_{1}$ and $s_{2}$ are independent with  $(0,1)$-semicircle distribution and note that $E \otimes 1_{2}(a) = 0$.
Further, for an arbitrary element in $\mathcal{B}$,
$$ b = \left( \begin{array}  {cc}
b_{1,1} &  b_{1,2} \\
b_{2,1} & b_{2,2} \\
\end{array} \right)
$$ 
observe that 
the  distribution of $a$ has the property that \begin{equation}\label{expectation} E \otimes 1_{2}(aba) =  \left( \begin{array}  {cc}
E(b_{1,1}) &  0 \\
0 & E( b_{2,2}) \\
\end{array} \right)\end{equation}
Now, let $G_{a}$ denote the Cauchy transform of this element and consider its multiplicative inverse (the $F$-transform)
$$ F^{(n)}_{a}(b) = b - \mu(ab^{-1}a) + O( b^{-3})$$ for $b \in M_{n}^{+}(\mathcal{B}).$
Observe that, since $F_{a}$ increases the imaginary part,  the non-commutative function defined through the equalities $$ H^{(n)}(b) := b - F^{(n)}_{a}(b) : M_{n}^{+}(\mathcal{B}) \rightarrow  M_{n}^{-}(\mathcal{B}) $$ has the appropriate domain and range for a Cauchy transform.
Moreover,  a quick look at the  series expansion yields the fact that the non-commutative function defined through the equalities  $K^{(n)}(b) : = H^{(n)}(b^{-1})$ is uniformly convergent in a neighborhood zero.
Further, for any invertible matrix $b \in M_{n}(\mathcal{B})$, \ref{expectation} implies that that $bH^{(n)}(b) = b E\otimes 1_{2n} \circ E'(b^{-1}) + O(|b|^{-2})$ as $|b| \uparrow \infty$ where $E'$ is the conditional expectation from the matrix algebra onto the subalgebra of diagonal elements.  Thus, the asymptotics are correct for diagonal elements but fail for arbitrary elements of $M_{n}(\mathcal{B})$.  As this distribution fails the property that $bH^{(n)}(b) \rightarrow 1_{n}$ as $|b| \uparrow \infty$ for arbitrary $M_{n}(\mathcal{B})$, by the previous proposition, this function is not a Cauchy transform, in spite of the fact that it has the correct asymptotics for diagonal elements and all of the requisite analytic properties.
\end{example}
While the conditions on our main theorem \ref{Main_theorem} may not be weakened to this extent, it is plausible that 
asymptotic requirements  may be weakened to the assumption that $ib_{k}G^{(n)}(ib_{k}) \rightarrow 1_{n}$ as $b_{k} \uparrow \infty $
for any sequence of \textit{positive} elements $b_{k} \uparrow \infty$.  This may be the correct analogue of the weakened hypotheses in the scalar-valued case, but we are unable to prove it at this time and are not sure that it should be true.

We close this section by citing two results in function theory that are of great importance to our later proofs.  The first is the Earle-Hamilton fixed point theorem and the second is a technical estimate proven in Lemma 2.3 of \cite{BS2}.
\begin{theorem}\cite{EH}\label{EH}
Let $\mathcal{D}$ be a connected open subset of a complex Banach space $ \mathcal{X}$ and let $f$ be a holomorphic mapping of $\mathcal{D}$ into itself such that
\begin{enumerate}
\item the image $f(\mathcal{D})$  is bounded in norm;
\item  the distance between points $f(\mathcal{D}) $ and points in the exterior of $\mathcal{D}$  is bounded below by a positive constant.
\end{enumerate}
Then the mapping $ f $ has a unique fixed point $x$ in $\mathcal{D}$  and if $y$  is any point in $ \mathcal{D}$, the iterates $ f^{\circ n}(y) $ converge to $x$.
\end{theorem}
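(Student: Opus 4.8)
The plan is to deduce this from the infinitesimal Carath\'eodory metric on $\mathcal{D}$: the geometric hypothesis forces $f$ to be a strict contraction for the associated length distance, and a Banach-type argument then finishes. For $x\in\mathcal{D}$ and $v\in\mathcal{X}$ put $\alpha_{\mathcal{D}}(x;v):=\sup\{\,|Dh(x)v| : h\colon\mathcal{D}\to\mathbb{D}\text{ holomorphic}\,\}$, where $\mathbb{D}$ is the open unit disc, and let $\rho_{\mathcal{D}}(x,y)$ be the infimum of $\int_{0}^{1}\alpha_{\mathcal{D}}(\gamma(t);\dot\gamma(t))\,dt$ over piecewise-$C^{1}$ paths in $\mathcal{D}$ joining $x$ to $y$. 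I would take as known two Schwarz--Pick-type facts (citing, not reproving): (i) composing with a holomorphic self-map $F\colon\mathcal{D}\to\mathcal{D}$ gives $\alpha_{\mathcal{D}}(F(x);DF(x)v)\le\alpha_{\mathcal{D}}(x;v)$, hence $\rho_{\mathcal{D}}(F(x),F(y))\le\rho_{\mathcal{D}}(x,y)$; (ii) $\alpha$ is anti-monotone in the domain and $\alpha_{B_{R}(0)}(x;v)\ge\|v\|/R$ on a ball, so if $\mathcal{D}\subseteq B_{R}(0)$ then $\alpha_{\mathcal{D}}(x;v)\ge\|v\|/R$ for all $x,v$, and therefore $\rho_{\mathcal{D}}(x,y)\ge\|x-y\|/R$ (and $\rho_{\mathcal{D}}$ is finite on a connected domain).

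First I would reduce to $\mathcal{D}$ bounded. Fix $\epsilon>0$ strictly below the separation distance; since $f(\mathcal{D})$ is bounded, connected (continuous image of the connected set $\mathcal{D}$), and at distance $\ge\epsilon$ from $\mathcal{X}\setminus\mathcal{D}$, the open thickening $\mathcal{D}_{0}:=\bigcup_{w\in f(\mathcal{D})}B_{\epsilon}(w)$ is bounded, connected, contained in $\mathcal{D}$, satisfies $f(\mathcal{D}_{0})\subset f(\mathcal{D})\subset\mathcal{D}_{0}$, and still has $f(\mathcal{D}_{0})$ at distance $\ge\epsilon$ from its complement; a fixed point in $\mathcal{D}_{0}$ is one in $\mathcal{D}$, and any fixed point of $f$ lies in $f(\mathcal{D})\subset\mathcal{D}_{0}$, so nothing is lost by replacing $\mathcal{D}$ with $\mathcal{D}_{0}$. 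The key step is then the contraction estimate: let $M:=\operatorname{diam}f(\mathcal{D})$ (assume $M>0$), fix $x\in\mathcal{D}$, and set $\tilde f:=f+\tfrac{\epsilon}{M}(f-f(x))$. For every $z$ we have $\|\tilde f(z)-f(z)\|\le\epsilon$ while $B_{\epsilon}(f(z))\subset\mathcal{D}$, so $\tilde f$ is a holomorphic self-map of $\mathcal{D}$; since $\tilde f(x)=f(x)$ and $D\tilde f(x)=(1+\epsilon/M)\,Df(x)$, fact (i) gives $\alpha_{\mathcal{D}}(f(x);Df(x)v)\le k\,\alpha_{\mathcal{D}}(x;v)$ with $k:=M/(M+\epsilon)<1$, and integrating along paths, $\rho_{\mathcal{D}}(f(x),f(y))\le k\,\rho_{\mathcal{D}}(x,y)$ for all $x,y\in\mathcal{D}$.

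To conclude, fix $y\in\mathcal{D}$ and put $y_{n}=f^{\circ n}(y)$; then $\rho_{\mathcal{D}}(y_{n},y_{n+1})\le k^{n}\rho_{\mathcal{D}}(y,y_{1})$, so $(y_{n})$ is $\rho_{\mathcal{D}}$-Cauchy, hence norm-Cauchy by fact (ii), and its norm limit $x^{\ast}$ lies in $\overline{f(\mathcal{D})}\subset\mathcal{D}$ (the closure is still at distance $\ge\epsilon$ from $\mathcal{X}\setminus\mathcal{D}$), with $f(x^{\ast})=x^{\ast}$ by continuity. Uniqueness is immediate---two fixed points $x^{\ast},z^{\ast}$ satisfy $\rho_{\mathcal{D}}(x^{\ast},z^{\ast})=\rho_{\mathcal{D}}(f(x^{\ast}),f(z^{\ast}))\le k\,\rho_{\mathcal{D}}(x^{\ast},z^{\ast})$ with $\rho_{\mathcal{D}}$ finite---and convergence of the iterates from any starting point $y$ follows from $\rho_{\mathcal{D}}(f^{\circ n}(y),x^{\ast})\le k^{n}\rho_{\mathcal{D}}(y,x^{\ast})\to0$ together with fact (ii). I expect the main obstacle, in turning this into a polished argument, to be organising the Carath\'eodory-metric preliminaries---facts (i) and (ii), the anti-monotonicity, and the comparison with the norm on a bounded domain; these are the standard Schwarz--Pick estimates but they are the real content, whereas once they are in place the reduction, the contraction estimate, and the iteration are all short.
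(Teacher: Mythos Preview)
The paper does not prove this statement at all: Theorem~\ref{EH} is quoted from \cite{EH} as a black-box tool and no proof is given, so there is nothing to compare against on the paper's side.

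Your outline is the standard Earle--Hamilton argument and is essentially correct. The key dilation trick $\tilde f = f + \tfrac{\epsilon}{M}(f - f(x))$ is exactly what yields the strict contraction constant $k = M/(M+\epsilon)$ for the Carath\'eodory length metric, and the reduction to a bounded subdomain via the $\epsilon$-thickening of $f(\mathcal{D})$ is clean. Two small points worth tightening: first, $\|\tilde f(z)-f(z)\| = \tfrac{\epsilon}{M}\|f(z)-f(x)\|\le \epsilon$ is only a non-strict inequality when the diameter $M$ is attained, so to guarantee $\tilde f(z)\in\mathcal{D}$ you should either take $M$ strictly larger than $\operatorname{diam} f(\mathcal{D})$ or shrink $\epsilon$ slightly in the dilation step; second, the case $M=0$ (i.e.\ $f$ constant) should be disposed of separately, as you note parenthetically. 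With those adjustments the argument is complete, and your identification of the Schwarz--Pick preliminaries as the substantive content is accurate.
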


\begin{proposition}\label{Fbounded}
Assume that $\mu \in \Sigma_{0}$ with exponential bound $M$.  Then we have that that  $$ \|F_{\mu}^{(n)}(b) - b \| < 4M(1+2M/\epsilon)$$
for all $b \in M_{n}^{+,\epsilon}(\mathcal{B})$ and $n \in \mathbb{N}$.
\end{proposition}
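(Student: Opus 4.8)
The plan is to make $\mu$ concrete and then read the estimate off a resolvent identity together with the inequality $\Im F_\mu^{(n)}(b)\ge\Im b$. By Theorem \ref{PV} we may realise $\mu=\mu_a$ for a self-adjoint $a$ in some $\mathcal B$-valued probability space $(\mathcal A,E,\mathcal B)$, and the GNS construction can be arranged so that $\|a\|\le 2M$: this norm bound is exactly the content of the estimate \ref{boundX} (that is, $\mu(Q(X)^\ast X^2Q(X))\le 4M^2\mu(Q(X)^\ast Q(X))$) established in the proof of Proposition \ref{sigma}, which says that $X$ acts as a self-adjoint operator of norm at most $2M$ in the GNS representation. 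Put $A:=a\otimes 1_n$ and $E_n:=E\otimes 1_n$, a unital completely positive (hence contractive) map, so that $G_\mu^{(n)}(b)=E_n[(b-A)^{-1}]$ and $F_\mu^{(n)}(b)=G_\mu^{(n)}(b)^{-1}$ for $b\in M_n^{+,\epsilon}(\mathcal B)$. Since $\Im(b-A)=\Im b>\epsilon 1_n$ we have the basic bound $\|(b-A)^{-1}\|\le 1/\epsilon$, and, using $\Im F_\mu^{(n)}(b)\ge\Im b$ (see \cite{BPV}), also $\|(F_\mu^{(n)}(b)-A)^{-1}\|\le 1/\epsilon$.

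I would then record the algebraic identities coming from $(b-A)^{-1}=b^{-1}+b^{-1}A(b-A)^{-1}$. Applying $E_n$ and using $\mathcal B$-bimodularity gives $G_\mu^{(n)}(b)=b^{-1}(1_n+L)$ with $L:=E_n[A(b-A)^{-1}]$, whence $F_\mu^{(n)}(b)=(1_n+L)^{-1}b$ and
$$F_\mu^{(n)}(b)-b=-(1_n+L)^{-1}(m+S(b)),\qquad m:=E_n[A],\quad S(b):=E_n[A(b-A)^{-1}A],$$
where $\|m\|=\|\mu(X)\|\le M$ by \ref{bound} and $\|S(b)\|\le\|a\|^2\|(b-A)^{-1}\|\le 4M^2/\epsilon$. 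Equivalently, from $G_\mu^{(n)}(b)(b-F_\mu^{(n)}(b))=G_\mu^{(n)}(b)b-1_n=E_n[(b-A)^{-1}A]$ one gets, after multiplying by $F_\mu^{(n)}(b)$ and using bimodularity (so that $F_\mu^{(n)}(b)$ can be moved under $E_n$),
$$b-F_\mu^{(n)}(b)=E_n[WA],\qquad W:=F_\mu^{(n)}(b)(b-A)^{-1}.$$
All of this is routine once $\|a\|\le 2M$ is in place.

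The real work is to control the ``normalised'' transform $W=F_\mu^{(n)}(b)(b-A)^{-1}$ (equivalently $(1_n+L)^{-1}=F_\mu^{(n)}(b)b^{-1}$), for which no crude operator-norm bound is available: this quantity is genuinely of order $1/\epsilon$ near the boundary $\{\Im b=\epsilon\}$, yet the product above stays bounded by a cancellation. My first step would be a self-improving estimate based on $\Im F_\mu^{(n)}(b)\ge\Im b$: writing $W=b(b-A)^{-1}-(b-F_\mu^{(n)}(b))(b-A)^{-1}=1_n+A(b-A)^{-1}-(b-F_\mu^{(n)}(b))(b-A)^{-1}$ one gets, with $\phi:=\|b-F_\mu^{(n)}(b)\|$, the bound $\|W\|\le 1+(2M+\phi)/\epsilon$; substituting into $\phi=\|E_n[WA]\|\le 2M\|W\|$ gives $\phi(1-2M/\epsilon)\le 2M(1+2M/\epsilon)$, which already yields $\phi\le 4M(1+2M/\epsilon)$ as soon as $\epsilon\ge 4M$. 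The main obstacle is the complementary regime of small $\epsilon$, where this bootstrap degenerates. There I expect one must combine the asymptotic behaviour of $F_\mu^{(n)}$ — namely $F_\mu^{(n)}(b)-b\to -m$ as $|b|\uparrow\infty$, read off the series for $G_\mu^{(n)}$ — with the technical estimate of \cite{BS2} (Lemma 2.3), or with an Earle--Hamilton (Theorem \ref{EH}) / subharmonicity argument on the complex slices of $M_n^{+,\epsilon}(\mathcal B)$, in order to push the bound all the way to the boundary and obtain the stated constant uniformly in $n$.
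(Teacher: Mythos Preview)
Your proposal has a genuine gap that you yourself flag: the bootstrap $\phi(1-2M/\epsilon)\le 2M(1+2M/\epsilon)$ only yields the stated bound when $\epsilon\ge 4M$, and for $\epsilon<4M$ you offer no argument. Your suggestion to fill this gap by invoking ``the technical estimate of \cite{BS2} (Lemma~2.3)'' is circular: Proposition~\ref{Fbounded} \emph{is} Lemma~2.3 of \cite{BS2} --- the paper does not prove it independently but imports it from that reference. The alternative suggestion (an Earle--Hamilton or subharmonicity argument on complex slices) is not a proof; the quantity $\|F_\mu^{(n)}(b)-b\|$ is not obviously subharmonic in any slice variable, and Theorem~\ref{EH} is a fixed-point statement, not a boundedness principle, so there is no evident way to make it deliver the constant $4M(1+2M/\epsilon)$.

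What you do have is correct and useful: the realisation of $\mu$ with $\|a\|\le 2M$ via inequality~\eqref{boundX}, the resolvent identity giving $F_\mu^{(n)}(b)-b=-(1_n+L)^{-1}(m+S(b))$ with $\|m+S(b)\|\le 2M(1+2M/\epsilon)$, and the identification $(1_n+L)^{-1}=F_\mu^{(n)}(b)b^{-1}$. The entire problem collapses to showing $\|F_\mu^{(n)}(b)b^{-1}\|\le 2$ uniformly on $M_n^{+,\epsilon}(\mathcal B)$, and this is exactly what your self-improving inequality cannot reach once $2M/\epsilon\ge 1$. Since the paper itself defers to \cite{BS2} rather than giving its own proof, there is nothing here to compare your method to; as a self-contained argument, however, your proposal does not establish the proposition.
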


\section{A Classification of B-Valued Cauchy Transforms.}\label{main_result}

\begin{theorem}\label{Main_theorem}
Let $g = (g^{(n)}): H^{+}(\mathcal{B}) \rightarrow H^{-}(\mathcal{B})$ denote an analytic, noncommutative function
such that the noncommutative function $h = (h^{(n)})_{n=1}^{\infty}$ defined by $h^{(n)}(b) := g^{(n)}(b^{-1})$ has
uniformly analytic extension to a neighborhood of $0$. Moreover, assume that $b_{k}g^{(n)}(b_{k}) \rightarrow 1_{n}$ in norm for any sequence $\{ b_{k} \}_{k \in \mathbb{N}} \subset M_{n}(\mathcal{B})$ with $| b_{k} | \uparrow \infty$ (in particular, $h^{(n)}(0) = 0$).
Then, $g = G_{\mu}$ for some $\mu \in \Sigma_{0}$.
\end{theorem}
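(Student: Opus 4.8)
I want to reconstruct, from the analytic data, a distribution $\mu \in \Sigma_0$ whose Cauchy transform is $g$. The natural candidate for $\mu$ comes from the Taylor coefficients of $h$ at $0$: since $h$ is uniformly analytic near $0$ with $h^{(1)}(0)=0$, Theorem \ref{difference} gives a uniformly convergent expansion $h^{(n)}(b) = \sum_{\ell=1}^{\infty}\Delta_R^{\ell}h^{(n)}(0,\dots,0)(b,\dots,b)$, and by Proposition \ref{matrix_prop} the higher-order difference-differentials at $0$ in $M_n(\mathcal{B})$ are completely determined by their scalar ($n=1$) analogues acting entrywise. So I would define, for each $\ell$, a map $m_\ell : \otimes^{\ell}\mathcal{B} \to \mathcal{B}$ by $m_\ell(b_1,\dots,b_\ell) := \Delta_R^{\ell}h^{(1)}(0,\dots,0)(b_1,\dots,b_\ell)$ (these are multilinear and, because $h$ is \emph{uniformly} analytic with bound $M$ on a ball of radius $r$, satisfy a bound of the form $\|m_\ell(b_1,\dots,b_\ell)\| \le M r^{-\ell}\|b_1\|\cdots\|b_\ell\|$ coming from the Cauchy-type estimate Theorem \ref{Cauchy} applied in slices). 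Then I set $\mu(b_0 X b_1 X \cdots X b_\ell) := b_0\, m_\ell(b_1,\dots,b_\ell)$ and extend $\mathcal{B}$-bimodularly and linearly to $\mathcal{B}\langle X\rangle$; the growth bound on $m_\ell$ is exactly condition \eqref{bound} defining $\Sigma_0$, and $\mu|_{\mathcal{B}} = \mathrm{Id}$ follows from $h^{(1)}(0)=0$ together with the normalization $b_k g^{(1)}(b_k)\to 1$, which forces the degree-zero coefficient of $h$ to be $1$.

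**Identifying $g$ with $G_\mu$ and positivity.** With $\mu$ so defined, the formal series $\sum \mu((b^{-1}X)^n b^{-1})$ is, by construction, the rearrangement of the Taylor series of $h^{(n)}(b^{-1}) = g^{(n)}(b^{-1})$ evaluated after substituting $b \mapsto b^{-1}$; uniform convergence of the $h$-expansion near $0$ gives convergence on a uniform neighborhood $|b| > C$, so $g^{(n)}(b) = \sum_n \mu((b^{-1}X)^n b^{-1})$ there. Since both sides are genuine analytic non-commutative functions on the connected set $H^+(\mathcal{B})$ agreeing on an open subset, they agree everywhere once we know the right-hand side (i.e.\ $G_\mu$) is actually defined and analytic on all of $H^+(\mathcal{B})$ — but this is only legitimate \emph{after} we know $\mu \in \Sigma_0$, since Proposition \ref{converse} (via Theorem \ref{PV}) is what guarantees $G_\mu$ extends to the full non-commutative upper half-plane. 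So the crux is: \textbf{verify that $\mu$, defined by the above coefficients, satisfies the complete positivity condition \eqref{CP}.} This is the main obstacle. The analytic hypotheses — that $g$ maps $H^+(\mathcal{B})$ into $H^-(\mathcal{B})$ — must be leveraged to extract positivity of the matrices $[\mu(P_i^*(X)P_j(X))]_{i,j}$.

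**Extracting positivity from $\Im g \le 0$.** The mechanism I would use: for $b \in M_n^+(\mathcal{B})$ one has $\Im(b^{-1}) < 0$, and $\Im(g^{(n)}(b)) \le 0$ says $g^{(n)}(b) - g^{(n)}(b)^* \le 0$, i.e.\ $\mathrm{Im}\, g$ is a negative-semidefinite-valued function. Plugging in block arguments $b = \mathrm{diag}$-plus-off-diagonal perturbations and using the matricial identity \eqref{matrix} for $\Delta_R^\ell$, the off-diagonal blocks of $g$ evaluated on such matrices reproduce sums $\sum \mu(\cdots)$ with prescribed $\mathcal{B}$-coefficients; choosing the blocks built from a finite family $P_1(X),\dots,P_N(X)$ of polynomials and letting $|b|\to\infty$ along the block-scaled directions, the leading asymptotics (controlled by $b_k g^{(n)}(b_k)\to 1$) isolate precisely the quadratic form $\xi^* [\mu(P_i^* P_j)]\xi$, while the sign of $\Im g$ forces it to be $\ge 0$. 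Concretely I would mimic the "moment matrix via resolvent" argument: for $t>0$ large, $t\,G_\mu(t\cdot 1 + A)$ where $A = \sum e_{ij}\otimes (\text{coeff})$ is a self-adjoint "amalgamated" matrix, the positivity of $-\Im G_\mu$ evaluated at $t\cdot 1 + i\cdot(\text{pos})$ plus an expansion in powers of $1/t$ yields, coefficient by coefficient, the positivity in $M_N(\mathcal{B})$ of the truncated moment sequences, hence \eqref{CP} by taking $N\to\infty$ and using the uniform bound \eqref{bound} to control tails. Once \eqref{CP} and \eqref{bound} and $\mu|_{\mathcal{B}}=\mathrm{Id}$ are in hand, Proposition \ref{sigma} upgrades $\mu$ to a bona fide element of $\Sigma_0$, Theorem \ref{PV} realizes it on a $C^*$-probability space, Proposition \ref{converse} says $G_\mu$ is defined and analytic on $H^+(\mathcal{B})$, and the identity-theorem argument above finishes: $g = G_\mu$. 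I expect the positivity step — making the passage from "$\Im g \le 0$ pointwise" to "all amalgamated moment matrices are positive" fully rigorous, including the interchange of limits justified by the uniform analyticity bound — to be the hard technical heart of the proof.
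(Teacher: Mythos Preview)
Your overall architecture matches the paper's: define $\mu$ from the difference-differential coefficients of $h$ at $0$, verify $\mu|_{\mathcal{B}}=\mathrm{Id}$ from the asymptotic normalization, get the exponential bound \eqref{bound} from the uniform Cauchy estimate (Theorem \ref{Cauchy} applied to a nilpotent upper-triangular block), and invoke Proposition \ref{sigma}. One small correction: define $\mu(b_1Xb_2\cdots Xb_{\ell+1}) := \Delta_R^{\ell+1}h^{(1)}(0,\dots,0)(b_1,\dots,b_{\ell+1})$ directly, with $\ell+1$ slots, rather than pulling $b_0$ out and imposing bimodularity by hand; bimodularity is then a \emph{consequence} of Proposition \ref{sigma}, not an input.

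The genuine gap is the positivity step, and your sketch does not contain the mechanism that makes it work. Expanding $g^{(N)}$ at $t\cdot 1 + A$ (or $t\cdot 1 + i\cdot\mathrm{pos}$) in $1/t$ and reading off $\Im g\le 0$ gives you control over $\mu\otimes 1_N(A^k)$ for a single self-adjoint $A\in M_N(\mathcal{B})$; positivity of such even moments is far weaker than \eqref{CP}, and there is no evident way to encode an arbitrary family $P_1,\dots,P_N$ of \emph{non-commutative polynomials in $X$} into one self-adjoint matrix over $\mathcal{B}$. The paper's route is different in two essential respects. First, it reduces the polynomial case to a \emph{single monomial} $P(X)=b_1Xb_2\cdots Xb_{\ell+1}$ by the trick of Lemma \ref{matrix_lemma}: replacing $X$ by $\tilde X=\bigl(\begin{smallmatrix}X&1\\1&X\end{smallmatrix}\bigr)$ lets any polynomial appear as a corner of a monomial of uniform degree. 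Second, for a monomial it builds a specific self-adjoint \emph{tridiagonal} matrix $C\in M_{n(\ell+1)}(\mathcal{B})$ out of the $b_i,b_i^{\ast}$ (with $|b_{\ell+1}|^2$ in the last diagonal block), perturbs to $C+\epsilon E_0>0$, and then composes with a state to form the \emph{scalar} function $z\mapsto f_{1,1}\circ g^{(n(\ell+1))}\bigl(z(C+\epsilon E_0)^{-1}\bigr):\mathbb{C}^+\to\mathbb{C}^-$. By the classical Theorem \ref{scalar} this is the Cauchy transform of a finite positive measure $\rho$, and its even moment $\rho(t^{2(\ell-1)})\ge 0$ is shown, via the block combinatorics of the tridiagonal $C$, to equal $f\bigl(\mu(PP^{\ast})\bigr)+O(\epsilon)$. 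The reduction is thus to the one-variable classical result along a carefully chosen positive direction, not to a coefficient-wise reading of $\Im g\le 0$; the tridiagonal construction is exactly the missing idea that turns a word $b_1Xb_2\cdots Xb_{\ell+1}$ into such a direction.
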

The converse of this theorem is  \ref{converse}.
Before proving Theorem \ref{Main_theorem}, we begin with a lemma.

 \begin{lemma}\label{matrix_lemma}
Let $P(X) = b_{1}Xb_{2} \cdots X b_{\ell + 1}$ denote a monomial in $M_{n}(\mathcal{B})\langle X \rangle$.
Let $$\tilde{X}_{n} = \left( \begin{array}  {cc}
    X \otimes 1_{n} & 1_{n} \\
 1_{n} & X \otimes 1_{n}
   \end{array} \right) \ \in M_{2n}(\mathcal{B})\langle X \otimes 1_{2n} \rangle $$
Then, for any $k \geq \ell$ there exist matrices $c_{i} \in M_{2n}(\mathcal{B})$
for $i = 1 , \ldots , k$ such that the following equality holds:
$$ c_{1} \tilde{X}_{n}   c_{2} \cdots  \tilde{X}_{n}  c_{k + 1} = \left( \begin{array}  {cc}
    P(X) & 0 \\
 0 & 0
   \end{array} \right)$$
\end{lemma}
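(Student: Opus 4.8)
The plan is to write the required product down explicitly by concatenating a handful of elementary ``gadget'' words in $\tilde X_n$. It is convenient to use the corner embedding $b \mapsto b' := \begin{pmatrix} b & 0 \\ 0 & 0 \end{pmatrix}$ of $M_n(\mathcal{B})$ into $M_{2n}(\mathcal{B})$ together with the fixed matrices $e_1 = \begin{pmatrix} 1_n & 0 \\ 0 & 0 \end{pmatrix}$, $e_2 = \begin{pmatrix} 0 & 0 \\ 0 & 1_n \end{pmatrix}$ and $w = \begin{pmatrix} 0 & 1_n \\ 0 & 0 \end{pmatrix}$ in $M_{2n}(\mathcal{B})$. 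A direct $2 \times 2$ block computation gives the three identities
$$ e_1 \tilde X_n e_1 = \begin{pmatrix} X \otimes 1_n & 0 \\ 0 & 0 \end{pmatrix}, \qquad e_1 \tilde X_n\, w\, \tilde X_n e_1 = \begin{pmatrix} X \otimes 1_n & 0 \\ 0 & 0 \end{pmatrix}, \qquad e_1 \tilde X_n e_2 \tilde X_n e_1 = e_1 ; $$
the first realizes a single factor of $X$ with one copy of $\tilde X_n$, the second realizes a single factor of $X$ with two copies of $\tilde X_n$, and the third is a ``pass-through'' that costs two copies of $\tilde X_n$ but contributes nothing. Appending pass-throughs to the first or second identity (one checks each appended $\tilde X_n e_2 \tilde X_n e_1$ leaves the value unchanged) produces, for every $j \ge 1$, a word $G_j$ with exactly $j$ occurrences of $\tilde X_n$, with $e_1$ at both ends and all internal coefficients in $\{ e_1, e_2, w \}$, such that $G_j = \begin{pmatrix} X \otimes 1_n & 0 \\ 0 & 0 \end{pmatrix}$: take the first identity followed by $(j-1)/2$ pass-throughs when $j$ is odd, and the second identity followed by $(j-2)/2$ pass-throughs when $j$ is even.

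Now, given $P(X) = b_1 X b_2 \cdots X b_{\ell+1}$ and $k \ge \ell$, set $j := k - \ell + 1 \ge 1$ and consider
$$ b_1' \, G_j \, b_2' \, \tilde X_n \, b_3' \, \tilde X_n \cdots \tilde X_n \, b_{\ell+1}' , $$
which contains $j + (\ell - 1) = k$ copies of $\tilde X_n$. Expanding $G_j$ into its $j$ copies of $\tilde X_n$ and absorbing the terminal $e_1$'s into $b_1'$ and $b_2'$ (using $b' e_1 = e_1 b' = b'$) rewrites this as $c_1 \tilde X_n c_2 \cdots \tilde X_n c_{k+1}$ with every $c_i \in M_{2n}(\mathcal{B})$. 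It then remains to evaluate: since $b_i' \begin{pmatrix} X \otimes 1_n & 0 \\ 0 & 0 \end{pmatrix} b_{i+1}' = \begin{pmatrix} b_i (X \otimes 1_n) b_{i+1} & 0 \\ 0 & 0 \end{pmatrix}$ and, more generally, $\begin{pmatrix} Q & 0 \\ 0 & 0 \end{pmatrix} \tilde X_n\, b' = \begin{pmatrix} Q (X \otimes 1_n) b & 0 \\ 0 & 0 \end{pmatrix}$ for every $Q$, a short induction on the number of factors collapses the product to $\begin{pmatrix} b_1 (X \otimes 1_n) b_2 \cdots (X \otimes 1_n) b_{\ell+1} & 0 \\ 0 & 0 \end{pmatrix} = \begin{pmatrix} P(X) & 0 \\ 0 & 0 \end{pmatrix}$, under the usual identification of $M_n(\mathcal{B})\langle X \rangle$ with the subalgebra of $M_n(\mathcal{B} \langle X \rangle)$ generated by $M_n(\mathcal{B})$ and $X \otimes 1_n$.

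The only genuine subtlety --- and the reason the lemma holds for \emph{every} $k \ge \ell$ rather than only for $k \equiv \ell \pmod{2}$ --- is a parity obstruction: inserting a pass-through, or more generally replacing one $\tilde X_n$ by a longer word of the same value, always changes the number of factors by an even amount, so one really does need the two-factor gadget $e_1 \tilde X_n\, w\, \tilde X_n e_1$ to switch parity. Everything else is bookkeeping: verifying the three block identities above, and checking that the concatenations neither merge two adjacent $\tilde X_n$'s nor leave a slot without its coefficient $c_i$.
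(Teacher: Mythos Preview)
Your proof is correct but more elaborate than the paper's. Both arguments start the same way: take $c_i = b_i \otimes e_{1,1}$ for $i = 1,\ldots,\ell+1$ to realise $P(X)\oplus 0$ with exactly $\ell$ copies of $\tilde X_n$. The difference lies in how one pads from $\ell$ up to an arbitrary $k\ge \ell$. You build a two-step pass-through $e_1\tilde X_n e_2\tilde X_n e_1 = e_1$ (which adds copies of $\tilde X_n$ in pairs) and then introduce the separate two-factor gadget $e_1\tilde X_n\, w\,\tilde X_n e_1$ to flip parity. The paper instead observes the single identity
\[
\bigl(A\otimes e_{1,1}\bigr)\,\tilde X_n\,\bigl(1_n\otimes e_{2,1}\bigr)=A\otimes e_{1,1},
\]
so that appending $\tilde X_n(1_n\otimes e_{2,1})$ once on the right adds \emph{one} copy of $\tilde X_n$ while leaving the value unchanged; iterating $k-\ell$ times finishes the job with no parity case split. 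In particular, your closing remark that ``inserting a pass-through \ldots\ always changes the number of factors by an even amount'' is an artifact of having restricted yourself to $e_1,e_2,w$: once you also allow $w^{*}=1_n\otimes e_{2,1}$, the obstruction disappears. Your route still works, of course, but the paper's single-step padding is shorter and avoids the need to treat odd and even $j$ separately. (Also note that your displayed construction tacitly assumes $\ell\ge 1$; the degenerate case $\ell=0$ needs a sentence, and the paper's one-step pass-through handles it without modification.)
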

\begin{proof}
Let $e_{1,1} , e_{1,2} , e_{2,1} , e_{2,2} \in M_{2}(\mathbb{C})$ denote the usual matrix units.
For $i = 1 , \ldots , \ell + 1$, let $c_{i} = b_{i} \otimes e_{1,1}$ and observe that, for each $1 \leq i \leq \ell + 1$, we have that
$$ c_{1} \tilde{X}_{n} c_{2} \cdots \tilde{X}_{n} c_{i} = b_{1}(X \otimes 1_{n})b_{2} \cdots (X \otimes 1_{n})b_{i} \otimes e_{1,1} .$$
Now, observe that
$$ [ b_{1}(X \otimes 1_{n})b_{2} \cdots (X \otimes 1_{n}) b_{i} \otimes e_{1,1}] \tilde{X}_{n} [1_{n} \otimes e_{2,1}] =   b_{1}(X \otimes 1_{n})b_{2} \cdots (X \otimes 1_{n}) b_{i} \otimes e_{1,1} . $$
Letting $c_{i} = 1_{n} \otimes e_{2,1} $ for $i > \ell + 1$, our result follows by induction.
\end{proof}

We now prove our main theorem.

\begin{proof}

We begin by defining our linear map $\mu : \mathcal{B}\langle X \rangle \rightarrow \mathcal{B}$.
Let $$\mu(b_{1}Xb_{2} \cdots Xb_{\ell +1}) := \Delta_{\mathcal{R}}^{\ell + 1}h^{(1)}(\underbrace{0,\ldots,0}_{\ell + 2 \  - \ times})( b_{1} , b_{2} , \ldots , b_{\ell+1})$$ for elements $b_{1}, \ldots , b_{\ell + 1} \in \mathcal{B}.$
It follows from \cite{KVV}, theorem 3.10, that $\mu$ is a well defined, linear map on $B\langle X \rangle$.  
Consider  the operator  $\mu \otimes 1_{n}$ on $M_{n}(\mathcal{B}) \langle X \otimes 1_{n} \rangle$.
 Proposition \ref{matrix_prop} implies that 
\begin{align*}
& \Delta_{\mathcal{R}}^{\ell + 1}h^{(n)}(0, \ldots , 0 )( B_{1} , \ldots , B_{\ell + 1}) \\
& = \left(\sum_{k_{1} , \ldots , k_{\ell}= 1}^{n} \Delta_{\mathcal{R}}^{\ell}h^{(1)} (0,\ldots , 0)( b_{i,k_{1}} , b_{k_{1} , k_{2}} , \ldots , b_{k_{\ell },j}  )  \right)_{i,j=1}^{n} \\
& = \mu \otimes 1_{n} (B_{1}(X \otimes 1_{n}) B_{2} \cdots (X \otimes 1_{n}) B_{\ell + 1}).
\end{align*}

In order to prove that $ \mu \in \Sigma_{0}$, we need to show that
\begin{enumerate}[(I).]
\item $\mu(b) = b$ for all $b\in \mathcal{B}$, \label{prove1} 
\item $ \mu$ is bounded in the sense of  (\ref{bound}), \label{prove2} 
 \item $\mu$  is completely positive in the sense of (\ref{CP}). \label{prove3}
\end{enumerate}

To prove \ref{prove1}, note that $h$ is analytic at $0$
implies that $\delta h^{(1)} (0 ; b)$ is analytic in the variable $b$.
Now, pick $b \in \mathcal{B}$ invertible.  We have that
$$ \mu(b) = \Delta^{1}_{\mathcal{R}}h^{(1)}(0,0)(b) = \delta h^{(1)}(0;b) = \lim_{\zeta \rightarrow 0} \frac{h^{(1)}(\zeta b) - h^{(1)}(0)}{\zeta} = \lim_{\zeta \rightarrow 0} b[(\zeta b)^{-1}g^{(1)}((\zeta b)^{-1})]$$
By assumption, the right hand side converges to $b$ as $\zeta \rightarrow 0$ (the sequential assumption is enough since our analyticity assumption implies that a limit exists). Analytic continuation implies this holds for all $b \in \mathcal{B}$.

To prove \ref{prove2}, we note that this is equivalent to showing that 
 $$\| \mu(b_{1}Xb_{2} \cdots Xb_{\ell + 1}) \|  \leq CM^{\ell +1}$$
for a fixed $C > 0$, provided that $\| b_{1} \| = \cdots = \| b_{\ell + 1} \| = 1$.  Consider the element of $ M_{\ell + 2}(\mathcal{B})$
$$ B = \left( \begin{array} {cccccc}
0 & b_{1} & 0 & 0 & \cdots & 0 \\
0 & 0 & b_{2} & 0 & \cdots & 0 \\
0 & 0 & 0 & b_{3} & \cdots & 0 \\
\ & \vdots & \ & \ & \vdots & \ \\
0 & 0 & 0 & 0 & \cdots & b_{\ell+1} \\
0 & 0 & 0 & 0 & \cdots &0
\end{array} \right) .$$
Note that $h^{(\ell+1)}$ has a bound of $C$ on a ball of radius $r$ about $0$, independent of $\ell$ since we are assuming that $h$ is uniformly analytic.
Thus, \begin{align*}
      \| \mu(b_{1}Xb_{2} \cdots X b_{\ell + 1}) \| & = \| \delta^{\ell + 1} h^{(\ell + 2)}(0;B) \| \\
& = \| \Delta_{\mathcal{R}}^{\ell + 1} h^{(\ell + 2)} (0 , \ldots , 0)( B , \ldots , B) \| \\
& =  \| r^{-(\ell + 1)} \Delta_{\mathcal{R}}^{\ell + 1} h^{(\ell + 2)} (0 , \ldots , 0 )( r B , \ldots , r B) \| \\
& \leq C \left( \frac{1}{r} \right)^{\ell + 1} 
      \end{align*}
where the last inequality follows from Theorems \ref{Cauchy} and \ref{difference}.

To prove \ref{prove3}, we first show that, given any monomial $P(X) \in M_{n}(\mathcal{B})\langle X \otimes 1_{n} \rangle$ and self adjoint element $b_{0} \in M_{n}(\mathcal{B})$,
we have that 
\begin{equation}\label{mon_claim}
\mu \otimes 1_{n}(P(X \otimes 1_{n} +  b_{0})^{\ast}P(X \otimes 1_{n}  +  b_{0})) \geq 0
\end{equation}
 (we will then invoke Lemma \ref{matrix_lemma} to complete the proof).
Towards this end, let $P(X) = b_{1}Xb_{2} \cdots Xb_{\ell + 1}$ for $b_{1} , \ldots , b_{\ell + 1} \in M_{n}(\mathcal{B})$ and $\ell \geq 0$.
We will assume that $|b_{\ell + 1}| > c1_{n}$ for some constant $c > 0$.
Inequality \ref{mon_claim} will follow for the general case by letting $c \downarrow 0$.
Pick $\epsilon > 0$.  Let $c_{i} = \delta b_{i}$  for $i=1,\ldots , \ell$ and $c_{\ell + 1} = b_{\ell + 1}/\delta^{\ell}$ for $\delta > 0$ as yet unspecified.
Observe that $b_{1}Xb_{2} \cdots Xb_{\ell + 1} = c_{1} X c_{2} \cdots X c_{\ell + 1}$.

Consider the elements $ C , E_{0}, E_{1} \in M_{n(\ell + 1)}(\mathcal{B})$ defined as follows:
$$ C =  \left( \begin{array}  {ccccccc}
    0 & c_{1} & 0 & 0 & 0 &  \cdots & 0 \\
 c_{1}^{\ast} & 0 & c_{2} & 0 & 0 &\cdots & 0 \\
0 & c_{2}^{\ast} & 0 & c_{3} & 0 & \cdots & 0 \\
\vdots & \ & \ & \vdots & \ & \ & \vdots \\
0 & 0 & \cdots & 0 & c_{\ell - 1}^{\ast} & 0 & c_{\ell} \\
0 & 0 & \cdots & 0 & 0 & c_{\ell}^{\ast} & |c_{\ell + 1}|^{2}
   \end{array} \right)  ; \ \ E_{0} =\underbrace{1_{n} \oplus 1_{n} \oplus \cdots \oplus 1_{n}}_{\ell \ times} \oplus 0_{n}$$
and  $E_{1} = 1_{n(\ell + 1)} - E_{0}$.  Consider $b_{0} \in M_{n}(\mathcal{B}) $ as in \ref{mon_claim}.
We define a function $$\tilde{g}^{(n(\ell + 1))}(b) : = g^{(n(\ell + 1))}(b - b_{0} \otimes 1_{\ell + 1}): M_{n(\ell + 1)}(\mathcal{B})^{+} \rightarrow M_{n(\ell + 1)}(\mathcal{B})^{-}. $$
We will use these objects to prove \ref{mon_claim}, but must first single out some intermediate results.
We claim the following:
\begin{enumerate}[(i).]
\item\label{C_1}  $C + \epsilon E_{0} > \gamma 1_{n(\ell+1)}$ for some $\gamma > 0 $ provided that  $ \delta$ is small enough.
\item\label{C_2}  The $n \times n$ minor in the top left corner of $$[(C + \epsilon E_{0})(X \otimes 1_{n(\ell + 1)} + b_{0} \otimes 1_{\ell + 1})]^{2(\ell-1) } (C + \epsilon E_{0})$$ is equal to $P(X +  b_{0})P^{\ast}(X +  b_{0}) + O(\epsilon)$.
\end{enumerate}
Regarding  $\tilde{g}^{(n(\ell + 1))}$, we claim the following:
\begin{enumerate}[(a).]
\item\label{g_1} $\tilde{g}^{(n(\ell + 1))}(b) = \sum_{p=0}^{\infty} \mu( [ b^{-1}(X \otimes 1_{n(\ell + 1)} + b_{0} \otimes 1_{\ell + 1}) ]^{p} b^{-1}) $
\item\label{g_2} $b\tilde{g}^{(n(\ell + 1))}(b) \rightarrow 1_{n(\ell + 1)}$ in norm as $|b|  \uparrow \infty$
\item\label{g_3}  $\tilde{h}^{(n(\ell + 1))}(b):=\tilde{g}^{(n(\ell + 1))}(b^{-1})$ has analytic extension to a neighborhood of zero.
\end{enumerate}

To prove \ref{C_1}, pick  $\min{ \{ c^{2} , \epsilon \} } > \lambda  > 0$ (where again, $|b_{\ell + 1}| > c1_{n}$).
Let $f \in M_{n(\ell + 1)}(\mathcal{B})^{\ast}$ denote a state.
\begin{align*}
f(C + \epsilon E_{0}) &= f(E_{1}CE_{1} + \epsilon E_{0}) + f(C - E_{1}C E_{1}) \\
& \geq \lambda - \| C - E_{1}CE_{1} \| \geq \lambda - \delta \| B - E_{1}BE_{1} \|
 \end{align*}  
and the right hand side is bounded away from $0$ for $\delta$ small enough (this inequality arises since $E_{1}CE_{1} + \epsilon E_{0} \geq \lambda 1_{n(\ell + 1)}$ and $f$ is a state).  As this was an arbitrary state, our claim holds.

To prove \ref{C_2}, we set notation by considering the decomposition $M_{n(\ell + 1)}(\mathbb{C}) = M_{\ell + 1}(\mathbb{C}) \otimes M_{n}(\mathbb{C})$.
Let $\{ e_{i,j}\}_{i,j=1}^{\ell+1}$ denote the usual matrix units for $M_{\ell + 1}(\mathbb{C})$ and let $\{ F_{i,j} \}_{i,j = 1}^{\ell + 1} \subset M_{n(\ell + 1)}(\mathbb{C})$
denote the block matrix units with $F_{i,j} := e_{i,j} \otimes 1_{n}$.  Thus, we have the following equivalent formulation of \ref{C_2}:
$$F_{1,1}[(C + \epsilon E_{0})(X \otimes 1_{n(\ell + 1)} + b_{0})]^{2(\ell-1) } (C + \epsilon E_{0})F_{1,1} = [P(X + b_{0})P^{\ast}(X +  b_{0}) + o(\epsilon)] \oplus 0_{n\ell}.$$
To prove this, observe that $$ C + \epsilon E_{0} = \sum_{p = 1}^{\ell} \left[ \epsilon F_{p,p} +e_{p, p+1} \otimes c_{p} +   e_{p+1 , p}\otimes c^{\ast}_{p} \right] +   e_{\ell + 1 , \ell + 1}\otimes |c_{\ell + 1}|^{2}. $$
Pick any decomposition of $F_{1,1}$ into a product of $2\ell - 1$ matrix units:
$$ F_{1,1} = F_{1,i_{1}} F_{i_{1},i_{2}} \cdots F_{i_{2\ell - 2} , 1}$$
Observe that $(C + \epsilon E_{0})$ has non-zero entries in the block matrix units $F_{i,j}$ if and only if $i-1 \leq j \leq i+1$.
This means that such a product is non-zero and contains $F_{\ell+1 , \ell + 1}$ as one of its entires if and only if it is of the form
 $$F_{1,2}F_{2,3} \cdots F_{\ell, \ell + 1} F_{\ell + 1, \ell + 1} F_{\ell + 1, \ell} \cdots F_{2,1} = F_{1,1} . $$
The remaining non-zero terms do not contain a copy of $F_{\ell + 1 , \ell + 1 }$ so that they must contain an element of the form $F_{p,p}$ with $1 \leq p \leq \ell$, and \ref{C_2} follows immediately.

Switching to the function $\tilde{g}^{n(\ell + 1)}$, note that property \ref{g_2} follows immediately from the analogous fact for  $g^{n(\ell + 1)}$.
Property \ref{g_3} follows from the observation that $$\tilde{h}^{n(\ell + 1)}(b) = \tilde{g}^{n(\ell + 1)}(b^{-1}) = g^{n(\ell + 1)}(b^{-1} - b_{0}) = h^{n(\ell + 1)}\left( \sum_{k=0}^{\infty}[(bb_{0})^{k}b]\right) $$
which lies entirely inside the domain of $h^{n(\ell + 1)}$ for $\| b \|$ small enough.  Property \ref{g_1} follows from a basic series rearrangement argument as well as analytic continuation.

We now have the pieces in place to prove \ref{mon_claim}.
Note that \ref{C_1} implies that  $C + \epsilon E_{0}$ is  invertible so that  the map $$z \mapsto \tilde{g}^{(n(\ell + 1))}(z(C + \epsilon E_{0})^{-1})$$  sends $\mathbb{C}^{+}$ into $M_{n}(\mathcal{B})^{-}$.
Let $B_{i,j} \in M_{n}(\mathcal{B})$ for $i,j = 1 , \ldots , \ell + 1$ and consider the element $B = (B_{i,j})_{i,j = 1}^{\ell + 1} \in M_{n(\ell + 1)}(\mathcal{B})$.
Given a state $f \in M_{n}(\mathcal{B})^{\ast}$ we define a new state $$f_{1,1}(B) := f(B_{1,1}) :   M_{n(\ell + 1)}(\mathcal{B}) \rightarrow \mathbb{C}.$$
We may define a map $$G_{f,C,\epsilon}(z) = f_{1,1} \circ g^{(n(\ell + 1))}(z(C + \epsilon E_{0})^{-1}) : \mathbb{C}^{+} \rightarrow \mathbb{C}^{-}.$$
Properties \ref{g_1} and \ref{g_2} imply the following for $z \in \mathbb{C}^{+}$:
\begin{align*}
\lim_{|z| \uparrow \infty} z G_{f,C,\epsilon}(z) = & \lim_{|z| \uparrow \infty} f_{1,1} \left[ (C + \epsilon E_{0}) ( z(C + \epsilon E_{0})^{-1}  g^{(n(\ell+1))}(z(C + \epsilon E_{0})^{-1} ))  \right] \\
& = f_{1,1}( C + \epsilon E_{0} ) =  \epsilon f(1_{n}) > 0
\end{align*}
This implies that $G_{f,C,\epsilon}$ is the Cauchy transform of a finite, positive measure $\rho$ with mass equal to $\epsilon$ (see, for instance, \cite{BV} ).

Now, observe that the coefficent of  $z^{-2\ell + 1}$ for the  function $G_{f,C,\epsilon}$ is equal to $\rho(t^{2(\ell-1)}) > 0$.
Furthermore, since \begin{align*}
G_{f,C,\epsilon}(z) & = G_{\rho}(z) = \sum_{\ell = 0}^{\infty} \frac{\rho(t^{\ell})}{z^{\ell + 1}} \\
& = \sum_{\ell = 0}^{\infty} \frac{f_{1,1}(\mu([(C + \epsilon E_{0})(X \otimes 1_{n(\ell + 1)} + b_{0})]^{\ell} (C + \epsilon E_{0}) ))}{z^{\ell + 1}}
\end{align*}
we may conclude that $$f_{1,1} \circ \mu([(C+ \epsilon E_{0})(X \otimes 1_{n(\ell + 1)} + b_{0})]^{2(\ell-1)} (C + \epsilon E_{0}) )= \rho(t^{2(\ell - 1)}) > 0.$$
Recalling \ref{C_2}, it follows that $f \circ \mu ( [P(X + b_{0})P^{\ast}(X +  b_{0}) + o(\epsilon)]) > 0$.  Letting $\epsilon \downarrow 0$ and noting that $f$ was an arbitrary state, we have proven that
$$ \mu \otimes 1_{n} ( P(X + b_{0})P^{\ast}(X +  b_{0}) ) \geq 0$$
for any monomial $P(X) \in M_{n}(\mathcal{B})\langle X \rangle$.

As in section 3.5 in \cite{Speichop}, we need only show that $\mu(P(X)P^{\ast}(X)) \geq 0$ for  an arbitrary polynomial $P(X) \in M_{k}(\mathcal{B}) \langle X \rangle$ for some $k \in \mathbb{N}$ in order to complete our proof of \ref{prove3} .
  Let $P(X) = \sum_{i=1}^{N} P_{i}(X)$
with  $P_{i}(X) = b_{1}^{(i)}X b_{2}^{(i)} \cdots X b_{n(i)}^{(i)}$
and $m = \max_{i} n(i)$.
To each of these monomials $P_{i}$ we associate the family of elements $c_{1}^{(i)} , \ldots , c_{m}^{(i)} \in M_{2k}(\mathcal{B})$
that satisfy the previous lemma.  Consider the $2Nk \times 2Nk$ matrices
$$C_{j} = c_{j}^{(1)} \oplus c_{j}^{(2)} \oplus \cdots \oplus c_{j}^{(N)}$$ for each $j = 1,\ldots , m$
as well as \begin{equation}\label{tildeX}
\tilde{X} = \underbrace{ \left( \begin{array}  {cc}
    X \otimes 1_{k}  & 1 \\
 1 & X \otimes 1_{k}
   \end{array} \right) \oplus 
\left( \begin{array}  {cc}
    X \otimes 1_{k}  & 1 \\
 1 & X \otimes 1_{k} 
   \end{array} \right)
\oplus
\cdots \oplus
\left( \begin{array}  {cc}
    X \otimes 1_{k}  & 1 \\
 1 & X  \otimes 1_{k}
   \end{array} \right)}_{N \ times}
\end{equation}
By the previous lemma, we have that 
\begin{equation}\label{minor_id}
C_{1}\tilde{X} C_{2} \cdots \tilde{X}C_{m}=\left( \begin{array}  {cccccc}
   P_{1}(X) & 0 & 0 &\cdots & 0 & 0 \\
 0 & 0 & 0 & \cdots & 0 & 0  \\
0 & 0 & P_{2}(X) & \cdots  & 0 & 0  \\
 \ & \vdots  & \  & \vdots & 0 & 0 \\
0 & 0 & 0 & \cdots & P_{N}(X) & 0 \\
0 & 0 & 0& \cdots & 0 & 0
   \end{array} \right) .
\end{equation}

Consider the matrix $2Nk \times 2Nk$ matrix $$ S =   \left( \begin{array}  {cccc}
   1 & 0 & \cdots & 0 \\
 1 & 0 & \cdots & 0 \\
\ & \vdots & \ & \vdots \\
1 & 0 & \cdots & 0
   \end{array} \right)$$
We have shown that $\mu \otimes 1_{2Nk}(Q(\tilde{X}) Q(\tilde{X})^{\ast}) \geq 0$ for monomials $Q(X) \in M_{2Nk}(\mathcal{B})\langle X \rangle $. In particular, we have the following inequality: 
\begin{align*}
0 & \leq  \mu \otimes 1_{2Nk} (C_{1}\tilde{X} C_{2} \cdots \tilde{X}C_{n}SS^{\ast}(C_{1}\tilde{X} C_{2} \cdots \tilde{X}C_{m})^{\ast}) \\
& = \mu \otimes 1_{2Nk} \left[ \left( \begin{array}  {cccccc}
   P_{1}(X) P_{1}^{\ast}(X) & 0 &   P_{1}(X) P_{2}^{\ast}(X)  &\cdots &    P_{1}(X) P_{N}^{\ast}(X) & 0 \\
 0 & 0 & 0 & \cdots & 0 & 0  \\
  P_{2}(X) P_{1}^{\ast}(X) & 0 &   P_{2}(X) P_{2}^{\ast}(X)  & \cdots  &  P_{2}(X) P_{N}^{\ast}(X) & 0  \\
 \ & \vdots  & \  & \vdots & 0 & 0 \\
  P_{N}(X) P_{1}^{\ast}(X)  & 0 &   P_{N}(X) P_{2}^{\ast}(X)  & \cdots &   P_{N}(X) P_{N}^{\ast}(X)  & 0 \\
0 & 0 & 0& \cdots & 0 & 0 \\
   \end{array} \right)  \right] \\
\end{align*}
If we apply the $M_{k}(\mathcal{B})$-valued vector state generated by the $1 \times 2N$ column vectors with all entries equal to $1_{k}$,
as this is a positive map, it follows that $0 \leq \mu \otimes 1_{k} (P(X)P^{\ast}(X))$, thereby proving our theorem.
\end{proof}

We are now prepared to prove our analogue of \ref{nevanlinna1}.

\begin{corollary}\label{nevanlinna}

Let $f = (f^{(n)}): H^{+}(\mathcal{B}) \rightarrow H^{+}(\mathcal{B})$ denote an analytic, noncommutative function.
The following conditions are equivalent.
\begin{enumerate}[(a)]
\item\label{nev1}  $f = F_{\mu}$ for some $\mu \in \Sigma_{0}$.
\item\label{nev2}  The noncommutative function $k = (k^{(n)})_{n=1}^{\infty}$ defined by $k^{(n)}(b) := (f^{(n)}(b^{-1}))^{-1}$ has
uniformly analytic extension to a neighborhood of $0$. Moreover, for any sequence $\{ b_{k} \}_{k \in \mathbb{N}}$ with $|b_{k}|\uparrow \infty$, assume  that $b_{k}^{-1}f^{(n)}(b_{k}) \rightarrow 1_{n}$ in norm.
\item\label{nev3} There exists an $\alpha \in \mathcal{B}$ and a $\sigma : \mathcal{B}\langle X \rangle \rightarrow \mathcal{B}$ satisfying \ref{CP} and \ref{bound} such that, for all $n \in \mathbb{N}$, 
$$f^{(n)}(b)  = \alpha 1_{n} + b - \sigma \otimes 1_{n} (b(1-Xb)^{-1}).$$
\end{enumerate}
\end{corollary}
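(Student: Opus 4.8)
The strategy is to prove the cyclic chain $(\ref{nev1}) \Rightarrow (\ref{nev2}) \Rightarrow (\ref{nev3}) \Rightarrow (\ref{nev1})$, leaning on Theorem \ref{Main_theorem}, Theorem \ref{converse}, Theorem \ref{rho_pos}, and Theorem \ref{vtransform}/\ref{kappa} for the scalar reductions. For $(\ref{nev1}) \Rightarrow (\ref{nev2})$: if $f = F_\mu = G_\mu^{-1}$, then $k^{(n)}(b) = f^{(n)}(b^{-1})^{-1} = G_\mu^{(n)}(b^{-1}) = h^{(n)}(b)$ in the notation of Proposition \ref{converse}, so the uniform analyticity near $0$ is exactly part \ref{CT2} of that proposition; the asymptotic statement $b_k^{-1} f^{(n)}(b_k) \to 1_n$ follows by inverting $b_k G_\mu^{(n)}(b_k) \to 1_n$ from part \ref{CT3}, using that $b_k G_\mu^{(n)}(b_k)$ is invertible for $|b_k|$ large and $F_\mu^{(n)}(b_k) = G_\mu^{(n)}(b_k)^{-1}$.

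**The main step, $(\ref{nev2}) \Rightarrow (\ref{nev3})$.** This is where the real work lies. Assuming \ref{nev2}, I first want to produce a Cauchy transform from $f$. Define $g^{(n)}(b) := f^{(n)}(b)^{-1}$; since $\Im(f^{(n)}(b)) \geq \Im(b) > 0$, $g$ maps $H^+(\mathcal{B})$ into $H^-(\mathcal{B})$ and is a noncommutative analytic function. The composite $g^{(n)}(b^{-1}) = f^{(n)}(b^{-1})^{-1} = k^{(n)}(b)$ extends uniformly analytically near $0$ by hypothesis, and the asymptotic condition $b_k^{-1} f^{(n)}(b_k) \to 1_n$ gives $b_k g^{(n)}(b_k) = b_k f^{(n)}(b_k)^{-1} \to 1_n$. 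Hence Theorem \ref{Main_theorem} applies and $g = G_\mu$ for some $\mu \in \Sigma_0$, so $f = F_\mu$. Now I must extract $\alpha$ and $\sigma$. The idea is to pass to the Voiculescu transform $\varphi_\mu^{(n)}(b) = (F_\mu^{(n)})^{\langle -1\rangle}(b) - b$, defined in a uniform neighborhood of $0$, and show $\mu$ is $\boxplus$-infinitely divisible by verifying condition (2) of Theorem \ref{rho_pos} — positivity of $\rho_\mu$ on $\mathcal{B}\langle X\rangle_0$. This is the hard part and I expect it to require a scalar reduction mirroring the proof of Theorem \ref{Main_theorem}: one builds block matrices out of the coefficients of the free cumulants (which are the Taylor coefficients of $\mathcal{R}_\mu$, hence of $f^{-1}$-related data), applies states $f_{1,1}$, and uses Theorem \ref{kappa} (the positive-definiteness of $\{\kappa_n\}_{n\geq 2}$) in the scalar case together with a monomial-approximation/Lemma \ref{matrix_lemma}-type argument to promote scalar positivity to positivity of $\rho_\mu$ on $\mathcal{B}\langle X\rangle_0$. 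Once infinite divisibility is established, condition \ref{PVremark} of Theorem \ref{rho_pos} yields a self-adjoint $\alpha \in \mathcal{B}$ and a map $\sigma$ satisfying \ref{CP} and \ref{bound} with $\mathcal{R}_\mu^{(n)}(b) = \alpha 1_n + \sigma(b(1_n - (X\otimes 1_n)b)^{-1})$; translating through $\varphi_\mu^{(n)}(b) = \mathcal{R}_\mu^{(n)}(b^{-1})$ and $f^{(n)} = F_\mu^{(n)} = (\varphi_\mu^{(n)} + \mathrm{id})^{\langle -1\rangle}$ — or more directly, using $F_\mu^{(n)}(b) = b - \varphi_\mu^{(n)}(F_\mu^{(n)}(b))$ iterated/inverted — gives the representation $f^{(n)}(b) = \alpha 1_n + b - \sigma\otimes 1_n(b(1-Xb)^{-1})$ after the appropriate change of variables. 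I will need to check that the functional equation pins down $f$ uniquely (e.g. via the Earle–Hamilton fixed point theorem, Theorem \ref{EH}, applied to $b \mapsto b + \alpha 1_n - \sigma\otimes 1_n(\cdots)$ restricted to a suitable subdomain, or by matching Taylor coefficients at $0$ after conjugation by inversion).

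**The converse $(\ref{nev3}) \Rightarrow (\ref{nev1})$.** Given $\alpha$ and $\sigma$ as in \ref{nev3}, one checks directly that the prescribed formula for $f^{(n)}$ is a well-defined noncommutative analytic self-map of $H^+(\mathcal{B})$: the series $\sigma\otimes 1_n(b(1-Xb)^{-1}) = \sum_{\ell\geq 0}\sigma\otimes 1_n(b(Xb)^\ell)$ converges for $\|b\|$ small by the bound \ref{bound} on $\sigma$, extends by analytic continuation, and the fact that $\sigma$ is completely positive forces $\Im(\sigma\otimes 1_n(b(1-Xb)^{-1})) \leq 0$ on $M_n^-(\mathcal{B})$ (as in the Remark following Theorem \ref{rho_pos}), so after the inversion built into the definition $\Im(f^{(n)}(b)) \geq \Im(b)$. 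Then $k^{(n)}(b) = f^{(n)}(b^{-1})^{-1}$ is manifestly analytic near $0$ and $b_k^{-1}f^{(n)}(b_k) \to 1_n$, so by $(\ref{nev2}) \Rightarrow (\ref{nev1})$ (established above via Theorem \ref{Main_theorem}) we get $f = F_\mu$ for some $\mu \in \Sigma_0$; matching the cumulant expansion of this $\mu$ with the $\sigma$-data confirms consistency. The single genuine obstacle is the infinite-divisibility extraction inside $(\ref{nev2}) \Rightarrow (\ref{nev3})$: translating the purely analytic hypotheses on $f$ into positivity of $\rho_\mu$ will require essentially redoing the block-matrix/state-pushforward machinery of the proof of Theorem \ref{Main_theorem} at the level of the cumulant functional rather than the moment functional, and care is needed because the relevant series for $\mathcal{R}_\mu$ only converge near $0$.
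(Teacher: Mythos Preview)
Your proof has a genuine conceptual error in the step $(\ref{nev2}) \Rightarrow (\ref{nev3})$. You correctly deduce from Theorem~\ref{Main_theorem} that $f = F_\mu$ for some $\mu \in \Sigma_0$, but you then attempt to prove that this $\mu$ is $\boxplus$-infinitely divisible in order to invoke Theorem~\ref{rho_pos}\ref{PVremark}. This cannot work: condition~\ref{nev2} is satisfied by $F_\mu$ for \emph{every} $\mu \in \Sigma_0$ (this is precisely what $(\ref{nev1}) \Rightarrow (\ref{nev2})$ says), and not every such $\mu$ is infinitely divisible. The ``hard part'' you anticipate---pushing the block-matrix machinery to prove positivity of $\rho_\mu$ on $\mathcal{B}\langle X\rangle_0$---is simply false in general.

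The confusion is between two superficially similar representations. Theorem~\ref{rho_pos}\ref{PVremark} gives a representation of the \emph{$R$-transform} $\mathcal{R}_\mu$, valid only for infinitely divisible $\mu$; condition~\ref{nev3} is a Nevanlinna representation of the \emph{$F$-transform} $F_\mu$, valid for all $\mu \in \Sigma_0$. These look alike because both involve a term of the form $\sigma(b(1-Xb)^{-1})$, but the $\sigma$'s are different objects attached to different transforms. The paper does not prove the implication $(\ref{nev2}) \Rightarrow (\ref{nev3})$ internally at all: once $f = F_\mu$ is known, it simply cites Theorem~6.6 of \cite{PV}, which establishes the Nevanlinna representation of $F_\mu$ for arbitrary $\mu \in \Sigma_0$. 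Your cycle for the other implications is fine (and matches the paper), but for $(\ref{nev2}) \Rightarrow (\ref{nev3})$ you need the Popa--Vinnikov result, not infinite divisibility.
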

\begin{proof}
The equivalence of \ref{nev1} and \ref{nev2}  is an immediate consequence of the previous result.
The implication \ref{nev2} $\Rightarrow $ \ref{nev3} is Theorem $6.6$ in \cite{PV}.
Lastly, if we assume \ref{nev3}, \ref{nev2} follows immediately from the convergence properties of the series expansions which follow from the estimates in  \ref{bound}.


\end{proof}
\section{A Classification Theorem for Linearizing Transforms.}\label{transform_result}

\begin{theorem}\label{VT}
Let $\phi : H^{+}(\mathcal{B}) \rightarrow H^{-}(\mathcal{B})$ denote an analytic, non-commutative function.
Define $\mathcal{R} = (\mathcal{R}^{(n)})_{n=1}^{\infty}$ through the equations $\mathcal{R}^{(n)}(b) := \phi^{(n)}(b^{-1}) :M_{n}^{-}(\mathcal{B}) \rightarrow M_{n}^{-}(\mathcal{B})$.  The following conditions are equivalent.
\begin{enumerate}[(a)]
\item\label{tfcae1} $\phi = \varphi_{\mu}$ for some $\boxplus$-infinitely divisible $\mu \in \Sigma_{0}$.
\item\label{tfcae2} There exists a self adjoint $\alpha \in \mathcal{B}$ and a $\mathbb{C}$-linear map $\sigma: \mathcal{B} \langle X \rangle \rightarrow \mathcal{B}$ satisfying inequality \ref{bound} such that 
$$ \mathcal{R}^{(n)}(b) = \alpha \otimes 1_{n} + \sigma \otimes 1_{n} (b(1_{n} - (X\otimes 1_{n}) b)^{-1}. $$
\item\label{tfcae3} The function $\mathcal{R}$ extends to a uniformly analytic non-commutative function defined on a neighborhood of $0$ satisfying $\mathcal{R}^{(n)}(b^{\ast}) = \mathcal{R}^{(n)}(b)^{\ast}$.
Moreover, $\phi$  satisfies $b_{k}^{-1}\phi^{(n)} (b_{k})\rightarrow 0$  for any sequence with  $|b_{k}| \uparrow \infty$.
\end{enumerate}
\end{theorem}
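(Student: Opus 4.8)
The plan is to prove the cycle $(\ref{tfcae1})\Rightarrow(\ref{tfcae3})\Rightarrow(\ref{tfcae2})\Rightarrow(\ref{tfcae1})$, the last implication being the heart of the matter. For $(\ref{tfcae1})\Rightarrow(\ref{tfcae3})$, suppose $\phi=\varphi_{\mu}$ with $\mu$ $\boxplus$-infinitely divisible, so $\mathcal{R}^{(n)}=\mathcal{R}_{\mu}^{(n)}$. Theorem \ref{rho_pos}, part (\ref{PVremark}), then furnishes a self-adjoint $\alpha\in\mathcal{B}$ and a $\mathbb{C}$-linear $\sigma$ satisfying \ref{CP} and \ref{bound} with $\mathcal{R}^{(n)}(b)=\alpha\otimes 1_{n}+\sigma\otimes 1_{n}\bigl(b(1_{n}-(X\otimes 1_{n})b)^{-1}\bigr)$. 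Expanding the resolvent into a geometric series, \ref{bound} makes the resulting series converge uniformly in $n$ on a fixed ball about $0$, so $\mathcal{R}$ has there a uniformly analytic non-commutative extension; the $2\times 2$ positivity implied by \ref{CP} forces $\sigma(P^{\ast})=\sigma(P)^{\ast}$, and $\alpha=\alpha^{\ast}$, whence $\mathcal{R}^{(n)}(b^{\ast})=\mathcal{R}^{(n)}(b)^{\ast}$ on that ball. Finally $\phi^{(n)}(b)=\mathcal{R}^{(n)}(b^{-1})\to\alpha\otimes 1_{n}$ as $|b|\uparrow\infty$, so $b^{-1}\phi^{(n)}(b)\to 0$.

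For $(\ref{tfcae3})\Rightarrow(\ref{tfcae2})$ I would mimic the opening of the proof of Theorem \ref{Main_theorem}: set $\alpha:=\mathcal{R}^{(1)}(0)$, self-adjoint by the reflection hypothesis, and define $\sigma$ on monomials by $\sigma(b_{1}Xb_{2}\cdots Xb_{\ell+1}):=\Delta_{\mathcal{R}}^{\ell+1}\mathcal{R}^{(1)}(0,\ldots,0)(b_{1},\ldots,b_{\ell+1})$, which is well defined by \cite{KVV}, Theorem 3.10. Uniform analyticity of $\mathcal{R}$ near $0$ together with the Cauchy estimates (Theorems \ref{Cauchy} and \ref{difference}) yields \ref{bound} for $\sigma$, and Proposition \ref{matrix_prop} with Theorem \ref{difference} identifies the Taylor expansion of $\mathcal{R}^{(n)}$ at $0$ with $\alpha\otimes 1_{n}+\sigma\otimes 1_{n}\bigl(b(1_{n}-(X\otimes 1_{n})b)^{-1}\bigr)$; this identity then propagates to all of $M_{n}^{-}(\mathcal{B})$ — the resolvent element being read inside a $C^{\ast}$-completion on which $\sigma$ extends, as in the remark following Theorem \ref{rho_pos} — by analytic continuation and connectedness of $M_{n}^{-}(\mathcal{B})$.

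For $(\ref{tfcae2})\Rightarrow(\ref{tfcae1})$ it is enough to produce, for each $t>0$, a $\mu_{t}\in\Sigma_{0}$ with $\mathcal{R}_{\mu_{t}}^{(n)}=t\,\mathcal{R}^{(n)}$ near $0$. Granting this, the coefficients of $\mathcal{R}_{\mu_{t}}$ are the free cumulants of $\mu_{t}$ and hence determine $\mu_{t}$, so $\mathcal{R}_{\mu_{s}\boxplus\mu_{t}}=(s+t)\mathcal{R}=\mathcal{R}_{\mu_{s+t}}$ forces $\mu_{s}\boxplus\mu_{t}=\mu_{s+t}$; thus $\mu:=\mu_{1}$ is $\boxplus$-infinitely divisible, Proposition \ref{necessary} extends $\varphi_{\mu}$ to $H^{+}(\mathcal{B})$, and this extension coincides with $\phi$ on a neighborhood of $\infty$, hence everywhere by connectedness of each $M_{n}^{+}(\mathcal{B})$ and the identity theorem, giving (\ref{tfcae1}). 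To build $\mu_{t}$ I would appeal to Theorem \ref{Main_theorem} applied to $G_{t}:=F_{t}^{-1}$, where $F_{t}:=(\mathrm{id}+t\phi)^{\langle-1\rangle}$; one then has $\mathcal{R}_{\mu_{t}}^{(n)}(b)=(G_{t}^{(n)})^{\langle-1\rangle}(b)-b^{-1}=t\,\mathcal{R}^{(n)}(b)$. Since near $\infty$ the map $\mathrm{id}+t\phi$ is an analytic perturbation of $\mathrm{id}+t\,\alpha\otimes 1_{n}$, the Kantorovich theorem (Theorem \ref{kantorovich}) provides $F_{t}$, hence $G_{t}$, on a uniform neighborhood of $\infty$, with $h_{t}^{(n)}(b):=G_{t}^{(n)}(b^{-1})$ uniformly analytic near $0$ and $b_{k}G_{t}^{(n)}(b_{k})\to 1_{n}$.

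The main obstacle is then the extension of $G_{t}$ to all of $H^{+}(\mathcal{B})$ with range in $H^{-}(\mathcal{B})$, as Theorem \ref{Main_theorem} demands. For fixed $b\in M_{n}^{+}(\mathcal{B})$ the value $F_{t}^{(n)}(b)$ should be the unique fixed point of $\Psi_{b}^{(n)}\colon w\mapsto b-t\,\phi^{(n)}(w)$; the standing hypothesis $\phi\colon H^{+}(\mathcal{B})\to H^{-}(\mathcal{B})$ gives $\Im\Psi_{b}^{(n)}(w)=\Im b-t\,\Im\phi^{(n)}(w)>\Im b$, so $\Psi_{b}^{(n)}$ maps suitable domains into themselves (a half-space lying above $b$, or $M_{n}^{+,\epsilon}(\mathcal{B})$ when $b$ is far from the real boundary, where $\phi$ is bounded). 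Verifying the two hypotheses of the Earle--Hamilton theorem (Theorem \ref{EH}) — bounded image, bounded away from the exterior — uniformly in $b$, which near $\partial H^{+}(\mathcal{B})$ requires the Lipschitz estimates \eqref{lipschitz}, local boundedness of non-commutative analytic functions and a careful choice of domain, is the delicate point; uniqueness of the fixed point forces $F_{t}$, hence $G_{t}$, to respect direct sums and similarities, and the Kantorovich/implicit-function estimates give analyticity and the required asymptotics, so Theorem \ref{Main_theorem} yields $\mu_{t}\in\Sigma_{0}$. A viable alternative for this last step is to reduce to the scalar Theorem \ref{vtransform} by composing $\phi$ with states, in the spirit of the complete-positivity argument in the proof of Theorem \ref{Main_theorem}.
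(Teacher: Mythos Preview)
Your overall architecture matches the paper's: construct $F=(\mathrm{id}+\phi)^{\langle-1\rangle}$ via Earle--Hamilton fixed points of $w\mapsto b-\phi^{(n)}(w)$, use Kantorovich to control $h^{(n)}(b)=(F^{(n)}(b^{-1}))^{-1}$ uniformly near $0$, feed this into Theorem~\ref{Main_theorem} (equivalently Corollary~\ref{nevanlinna}), and then observe that $\phi/k$ satisfies the same hypotheses to get infinite divisibility. Your cycle $(\ref{tfcae1})\Rightarrow(\ref{tfcae3})\Rightarrow(\ref{tfcae2})\Rightarrow(\ref{tfcae1})$ differs from the paper's $(\ref{tfcae1})\Rightarrow(\ref{tfcae2})\Rightarrow(\ref{tfcae3})\Rightarrow(\ref{tfcae1})$, but this is cosmetic: your hard step $(\ref{tfcae2})\Rightarrow(\ref{tfcae1})$ never uses the representation in $(\ref{tfcae2})$ beyond the uniform analyticity of $\mathcal{R}$ near $0$, so you are really doing $(\ref{tfcae3})\Rightarrow(\ref{tfcae1})$ as the paper does.

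There is, however, a genuine gap at exactly the place you flag as ``the delicate point.'' To run Earle--Hamilton for $b\in M_n^{+,\epsilon}(\mathcal{B})$ with $|b|$ \emph{not} large, you need $\phi^{(n)}$ to be \emph{uniformly bounded in norm} on all of $M_n^{+,\epsilon/2}(\mathcal{B})$; only then does $\Psi_b$ map a bounded ball strictly inside itself. The tools you propose---the Lipschitz estimate~\eqref{lipschitz} and local boundedness---cannot deliver this: they are local, while $M_n^{+,\epsilon/2}(\mathcal{B})$ is unbounded and you have no a priori control on $\phi$ far from the region $|b|>M$. The paper resolves this by a scalar reduction \emph{before} invoking Earle--Hamilton: for each state $f$ and each $b=x+iy$ with $y>\epsilon 1_n$, the map $z\mapsto f\circ\phi^{(n)}\bigl(x+z(M/\epsilon)y\bigr)$ sends $\mathbb{C}^+$ to $\mathbb{C}^-$ with the right decay, so Nevanlinna's representation bounds both the total mass and the constant term by $\|\phi^{(n)}\|$ on the region $|b|>M$ (which is at most some $R$), and evaluating at $z=i\epsilon/M$ yields $|f\circ\phi^{(n)}(b)|\le R(1+M/\epsilon)$ uniformly. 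This is the one idea your proposal is missing; your closing remark about composing with states and Theorem~\ref{vtransform} gestures in the right direction, but you present it as an alternative route rather than the indispensable input to the Earle--Hamilton step.
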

\begin{proof}
The implication \ref{tfcae1} $\Rightarrow$ \ref{tfcae2} is theorem $5.10$ in \cite{PV}.

Regarding the implication \ref{tfcae2} $\Rightarrow $ \ref{tfcae3}, the fact that $\mathcal{R}$ respects adjoints is immediate from its definition.
The uniform analyticity in a neighborhood of $0$ follows from the series expansion
$$ \mathcal{R}^{(n)}(b) = \alpha \otimes 1_{n} + \sum_{k=0}^{\infty} \sigma(b[(X\otimes 1_{n})b]^{k}) $$
and the assumption that $\sigma$ satisfies \ref{bound}.  Regarding the claims about the function $\phi$, the fact that $\mathcal{R}$ is defined in a neighborhood of zero implies that $\phi^{(n)}$ is defined for all $b \in M_{n}(\mathcal{B})$
with $|b|$ large enough.  Moreover, extension of $\mathcal{R}$ to a neighborhood of $0$ easily implies that $b^{-1}\phi^{(n)}(b) = b^{-1}\mathcal{R}^{(n)}(b^{-1}) \rightarrow 0$ as $|b| \uparrow \infty$ since this implies that $\|b^{-1} \| \downarrow 0$.
Thus, we have shown that  \ref{tfcae2} $\Rightarrow $ \ref{tfcae3}.

Thus, our theorem reduces to the implication \ref{tfcae3} $\Rightarrow$ \ref{tfcae1}.
We begin the proof by showing that assumption \ref{tfcae3} implies that the function $\phi$ is  uniformly bounded in norm on $H^{+}_{\epsilon}(\mathcal{B})$ for every $\epsilon > 0$.
To do so, we appeal to the scalar valued case.
Pick an element $b = x + iy \in M_{n}(\mathcal{B})$ with $y > \epsilon 1_{n}$ and $x$ self-adjoint.
The uniform analyticity of the function $\mathcal{R}$ implies that  there exists $R,M > 0$ such that  $\| \phi^{(n)}(b) \|  < R$ provided that $|b| > M$.
Given a state $f \in \mathcal{B}^{\ast}$, consider the complex analytic function $$\varphi_{f,b}(z) := f \circ \phi^{(n)} \left( x + z\left( \frac{M}{\epsilon} \right) y \right).$$
Observe that $\varphi_{f,b}: \mathbb{C}^{+} \rightarrow \mathbb{C}^{-}$ and satisfies $\varphi_{f,b}(z) / z  \rightarrow 0$ as $|z| \uparrow \infty$.
By Nevanlinna's theorem, there exists a real number $\beta$ and a finite Borel measure $\rho$ such that
$$\varphi_{f,b}(z) = \beta + \int_{\mathbb{R}} \frac{1 + tz}{t-z} d\rho(t) .$$
Consider the following chain of equalities and  inequalities, note that the first equality arises easily by plugging $i$ into the  Nevanlinna representation and the last inequality follows from the fact that $\Im{(x + i (M/\epsilon)y)} > M1_{n}$:
\begin{align*} |\rho(\mathbb{R})| & = |\Im{(\varphi_{f,b}(i) )}| = \left| \Im{ \left[  f \circ \phi^{(n)} \left( x + i\left( \frac{M}{\epsilon} \right) y \right) \right] } \right|  \\  & \leq \left| f \circ \phi^{(n)} \left( x + i\left( \frac{M}{\epsilon} \right) y \right)  \right| < R
\end{align*}
Moreover, another careful look at the Nevanlinna representation and the same reasoning implies the following: $$ |\beta| =  |\Re{(\varphi_{f,b}(i) )}| < R.$$
Noting that $$ \sup_{t\in \mathbb{R}} \left| \frac{1 + tz}{t-z} \right| \leq \max \{ |z| , |1/z| \} $$
for purely imaginary $z$, 
we may conclude that 
\begin{align*} | f \circ \phi^{(n)}(b)|  & = |\varphi_{f,b}(i\epsilon/M)| \leq |\beta| +   \left| \int_{\mathbb{R}} \frac{1 + tz}{t-z} d\rho(t) \right| \\  & \leq R + \sup_{t\in \mathbb{R}} \left| \frac{1 + tz}{t-z} \right| \rho(\mathbb{R}) \leq R(1 + M/\epsilon)\end{align*}
thereby proving our claim.


With this technical fact proven, we turn to the following claims:
\begin{enumerate}[(I)]
\item\label{F1}  The map $(b + \phi^{(n)}(b))^{\langle -1 \rangle}$ exists and extends to a domain  $M_{n}^{+}(\mathcal{B}) \cup M_{n}^{-}(\mathcal{B}) \cup \{ b \in M_{n}(\mathcal{B}) :  | b |  > C \} $ for fixed $C > 0$, independent of $n$.
\item\label{F2} There exists a $\mu \in \Sigma_{0}$ such that $ (b + \phi^{(n)}(b))^{\langle -1 \rangle} = F_{\mu}^{(n)}(b)$ for all $n \in \mathbb{N}$.
\item\label{F3} The distribution $\mu$ is $\boxplus$-infinitely divisible.
\end{enumerate}

To prove \ref{F1}  we will consider the analytic map $$h_{b}(w) :=  b - \phi^{(n)}(w)$$
for each $b \in M_{n}^{+}(\mathcal{B})  \cup\{ b \in M_{n}(\mathcal{B}) :  | b |  > C \}  $ 
and appropriate $C > 0$.  We will show that this map satisfies the hypotheses of the Earle-Hamilton theorem.
Indeed, to each $b$, we will associate an open set $\Omega^{(n)}(b)$ with the property that
$h_{b}(\Omega^{(n)}(b)) \subset \Omega^{(n)}(b)$ and show that this containment is proper in the sense of $\ref{EH}$.
We will then define a function $F$ that assigns to each of these elements $b$ the fixed point of the function $h_{b}$.
We will show that a fixed point of $h_{b}$ must be the composition inverse of the map $b \mapsto b + \phi^{(n)}(b)$.

Towards this end, we must first identify the appropriate $C$ as in the statement of  \ref{F1}.   Fix $n \in \mathbb{N}$.
We claim that there exists a fixed $b_{0} \in \mathcal{B}$ such that $\phi^{(n)}(b) \rightarrow b_{0} \otimes 1_{n}$ uniformly over $n$ as $| b| \uparrow \infty$.
Indeed, analyticity of the function $\mathcal{R}$ at $0$ implies that such a limit exists. The fact that $\mathcal{R}^{(n)}(b^{\ast}) = \mathcal{R}^{(n)}(b)^{\ast}$ implies that  the limit is self adjoint.
Lastly, the fact that the transforms respect direct sums and the the fact that the zero element in $M_{n}(\mathcal{B})$ is the $n$-fold direct sum of the zero element in $\mathcal{B}$
implies that the limit is of the form $b_{0} \otimes 1_{n}$ for each $n$.

Thus, for some $\delta > 0$, there exists  $C_{0} > 0$ such that $$\| \phi^{(n)}(b) -  b_{0} \otimes 1_{n}\| < \delta$$ for $|b| > C_{0}$ (independent of $n$ as a result of uniform analyticity).
Let $C = \|b_{0} \| + C_{0} + \delta$ and assume that $|b| > C$.
Let $r = |b| - C_{0} > 0$.
We define our fixed point set as $\Omega_{b}^{(n)} = B_{r}(b) \subset M_{n}(\mathcal{B})$, the open ball of radius $r$.
Observe that for $w \in \Omega_{b}^{(n)}$, we have that
\begin{equation}\label{subtle}
\| b - h_{b}(w) \| = \| \phi^{(n)}(w) \| \leq \|b_{0} \| + \delta = C - C_{0} < |b| - C_{0} = r.
\end{equation}
Thus, $\phi^{(n)}(w) \in \Omega_{b}^{(n)}$.  Moreover, the distance to the exterior points is bounded above by $|b| - C > 0$
so that the containment is proper in the sense of \ref{EH}.  Thus, we may invoke this theorem provided that $|b| > C$.

We isolate two distinct observations for further use.
The first is that $r \uparrow \infty$ as $|b| \uparrow \infty$.
The second is that our choice of $r$ is much larger than is actually necessary.  Indeed, a close look at \ref{subtle}
allows one to conclude that $r$ need only be large enough to dominate $\|b_{0} \| + \delta$ so that one could equally well take $r$ to be uniformly bounded.  
This allows us to conclude that, if we let $F(b)$ denote the fixed point of $h_{b}$ that arises as a result of the Earle-Hamilton theorem,
we have that there exists a constant $D > 0$ such that 
\begin{equation}\label{EHbound}
\| b - F(b) \| < D
\end{equation}
provided that $|b| > C$.


We now turn our attention to those $b \in H^{+}(\mathcal{B})$.  Assume that $\Im{(b)} > \epsilon 1_{n}$ and $|b| \ngtr C$. As we have just shown, the function $\phi$ has a uniform bound of $M$
for all $b \in H^{+}_{\epsilon/2}(\mathcal{B})$.  Fix $\lambda > 0$ and let $$\Omega^{(n)}_{b} =\{ w \in M_{n}^{+}(\mathcal{B}): \ \Im{(w)}> \epsilon/2 , \|w - b \| < M + \lambda \}.$$
For $w \in \Omega^{(n)}_{b}$, observe that $\Im{(\phi^{(n)}(w))} < 0$ so that $\Im{(h_{b}(w))} > \epsilon$.
Furthermore, $$ \|b - h_{b}(w) \| = \|\phi^{(n)}(w) \| \leq M.$$  Thus, $h_{b}(\Omega^{(n)}_{b}) \subset \Omega^{(n)}_{b}$ and the distance to the exterior points is bounded by the minimum of $\lambda$ and $\epsilon/2$.

Thus, we may invoke the Earle-Hamilton theorem for all points in the statement of \ref{F1} (the case of $H^{-}(\mathcal{B})$ follows by reflexivity).
Let $F$ denote the non-commutative function with domain $ H^{+}(\mathcal{B}) \cup H^{-}(\mathcal{B}) \cup \left(  \cup_{n=1}^{\infty} \{ b \in M_{n}(\mathcal{B}) :  | b |  > C \} \right)$ 
defined by setting $F(b)$ equal to the fixed point of $h_{b}$.  The fact that this is indeed a non-commutative function follows easily from the non-commutativity of $\phi$.

We must show that $F$ is analytic.
Fix $$ b \in  M_{n}^{+}(\mathcal{B}) \cup M_{n}^{-}(\mathcal{B}) \cup  \{ b \in M_{n}(\mathcal{B}) :  | b |  > C \} .$$
Let $g_{n}(b) := h^{\circ n}_{b}(b)$ and note that it is immediate from the definition of $h_{b}$ that $g_{n}$ is analytic in $b$ and  it follows from the Earle-Hamilton theorem that $g_{n}(b) \rightarrow F(b)$ in norm.
Let $B(0,\epsilon) \subset \mathbb{C}$ denote the complex $\epsilon$-ball where $\epsilon$ is chosen so that $B_{\epsilon}(b) \subset \Omega^{(n)}(b)$. Let  $h$ denote an element in the unit ball of $M_{n}(\mathcal{B})$ and $f \in M_{n}(\mathcal{B})^{\ast}$ a state.  Consider the map 
$$f(g_{n}(b + \zeta h)):B(0, \epsilon) \rightarrow \mathbb{C}.$$ 
These are analytic functions and, by the previous arguments, bounded in norm.  Thus, by Montel's theorem, we have that
$$ f(g_{n}(b + \zeta h)) \rightarrow f(F(b + \zeta h))$$
uniformly, so that $f(F(b + \zeta h))$ is also an analytic function in $\zeta$.
By Dunford's theorem (\cite{dunford}), since $F(b + \zeta h)$ is weakly analytic in this sense, it follows that $F(b + \zeta h)$ is analytic in $\zeta$.  This implies G\^{a}teaux differentiability of our function and, since we have local boundedness, analyticity.

Observe that the equation $\phi^{(n)}(b) = \mathcal{R}^{(n)}(b^{-1})$ implies, using the Cauchy estimates in \ref{Cauchy} and the observation that $\phi^{(n)}(b) - b_{0} \otimes 1_{n} \rightarrow 0$ as $|b| \uparrow \infty$, that for $|b|$ large enough we have that
$\| \delta (\phi^{(n)} + Id)(b ; h) - h \|$ can be made arbitrarily small.  Applying the inverse function theorem an appropriate point, we have that $\phi^{(n)} + Id$ is invertible in an open subset of $M^{+}_{n}(\mathcal{B})$.
We refer  to this inverse as $\tilde{F}$.  

We claim that $\tilde{F}$ satisfies our fixed point equation.  Recall that the domains $\Omega^{(n)}(b)$ are arbitrarily large open balls as $|b| \uparrow \infty$.
Moreover, if we consider elements of the form $b' = b + \phi^{(n)}(b)$, we have that
$$ \| \tilde{F}^{(n)}(b') - b' \| = \| \phi^{(n)}(b) \| < K $$
for a fixed constant $K > 0$ and $|b|$ large enough.  As we are dealing with open maps, we can assume that $\tilde{F}^{(n)}(b) $ is defined and is an element of $\Omega^{(n)}(b)$.  As this the fixed point set of  $h_{b}$, we have that
$$ h_{b}(\tilde{F}^{(n)}(b) ) = b + \phi^{(n)}(\tilde{F}^{(n)}(b)) = b - (\tilde{F}^{(n)}(b) - \phi^{(n)}(\tilde{F}^{(n)}(b))) + \tilde{F}^{(n)}(b) = \tilde{F}^{(n)}(b).  $$
Thus, $F = \tilde{F}$ on these open sets and, by continuation,  we are left to conclude that $F =  (\phi + Id)^{\langle -1 \rangle}$ on its entire domain, proving \ref{F1}.

To prove \ref{F2}, we claim that this function $F$ satisfies the properties of \ref{nevanlinna}.  We need to show that $$h^{(n)}(b) :=  (F^{(n)}(b^{-1}))^{-1} = ((b^{-1} + \phi^{(n)}(b^{-1}))^{\langle -1 \rangle})^{-1}$$
has uniformly analytic continuation to a neighborhood of $0$. Towards this end, we consider $$(h^{(n)})^{\langle -1 \rangle} (b)= (\phi^{(n)}(b^{-1}) + b^{-1})^{-1} = [1 + b\mathcal{R}^{(n)}(b)]^{-1}b$$
where the notation is justified since we have just shown that these functions are inverses.
Also note that it is immediate from the right side of the equality that this function extends to a neighborhood of zero.

The uniform analyticity of $h^{(n)}$ may be shown using the Kantorovich theorem.  Indeed, consider the functions $k_{b}^{(n)}(b') =(h^{(n)})^{\langle -1 \rangle} (b') - b $.
Computing the derivative of this function at the origin for $h$ invertible, we have that 
\begin{align*}
\delta(k_{b}^{(n)})(0;h) & = \lim_{\zeta \rightarrow 0} \frac{k_{b}^{(n)}(\zeta h) - k_{b}^{(n)}(0)}{\zeta}
 = \lim_{\zeta \rightarrow 0} \frac{ \left( \phi^{(n)} \left(\frac{h^{-1}}{\zeta} \right) + \frac{h^{-1}}{\zeta} \right)^{-1}}{\zeta} \\
 & =  \lim_{\zeta \rightarrow 0} \left[ \zeta \phi^{(n)} \left( \frac{h^{-1}}{\zeta} \right) + h^{-1} \right]^{-1} = h
\end{align*}
where the last equality follows from the asymptotics of $\phi^{(n)}$.  Thus, through continuation, the derivative is the identity operator at the origin.

Now, observe that uniform analyticity of the function $\mathcal{R}^{(n)}$ implies that there exists $R,M > 0$ such that so
that $\mathcal{R}^{(n)}$ bounded by R, uniformly  over $n$, on a noncommutative ball of radius $M$.
Fix $M/2 > \epsilon , \delta > 0$ .  Let $b, x, y \in M_{n}(\mathcal{B})$ satisfy $\|b\| < \delta$ and $\|x \| , \| y \| < \epsilon$.
Utilizing \ref{Cauchy} as well as \ref{lipschitz}, we have the following:
$$\| \delta k_{b}^{(n)}(x , \cdot) - \delta k_{b}^{(n)}(y , \cdot) \| \leq \frac{2R \| x - y \|}{M - 2\| x - y \|} \leq \frac{4R \epsilon}{M - 4 \epsilon} = K(\epsilon). $$
Further note that
\begin{equation}\label{korigin}
k^{(n)}_{b}(0) = \lim_{\|b' \| \rightarrow 0} (b'\mathcal{R}^{(n)}(b') + 1_{n})^{-1}b' - b = -b.
\end{equation}
Utilizing the terminology from \ref{kantorovich},
let  $x_{0}= 0$. We have that the constant $$\eta = \| x_{1}(b) \| = \| k_{b}^{(n)}(0)\| = \| b \| \leq \delta.$$
Thus, the constant $h = K\eta \leq K(\epsilon)\delta \leq 1/2$ for appropriate $\epsilon , \delta$ so that the hypotheses of \ref{kantorovich} are satisfied.

Kantorovich's theorem implies that the Newton method may be utilized to provide a unique root for  $k^{(n)}_{b}$ and, moreover,
the root  lies in a fixed ball about $b$ whose radius $t^{\ast}$ and is unique in the ball of radius $t^{\ast \ast}$ with these constants defined as follows
 $$t^{\ast} = \frac{2\eta}{1 + \sqrt{ 1 - 2h}} ; \ \ t^{\ast \ast} = \frac{1 + \sqrt{1 - 2h}}{K} \geq \frac{1}{K}. $$  Note that $t^{\ast \ast}$ can be chosen to be arbitrarily large for $\epsilon$ small enough.
Also note that $t^{\ast}$ and $t^{\ast \ast}$ are independent of $n$.

Observe that $k^{(n)}_{b}(h^{(n)}(b)) = 0$.  It is not immediately clear that $h^{(n)}(b)$ is, in general, contained in the ball of radius $t^{\ast \ast}$ and this is required if we are to show that $h^{(n)}(b)$ has norm bounded by $t^{\ast} + \| b \|$.
To prove this, we consider the function $\Phi_{n} : B_{\epsilon}(0) \rightarrow B_{\epsilon + t^{\ast}}(0)$ defined implicity by $k^{(n)}(\Phi_{n}(b)) = 0$
which, as we have just shown, is well defined.  We will show that $\Phi_{n}$ is analytic and that it agrees with $h^{(n)}$ on an open set.

To prove analyticity, we define a function $\Phi_{n,k}(b) = x_{k}(b)$ where $x_{k}(b)$ is the $k$th stage of the modified Newton approximation on which the Kantorovich theorem is based.
More explicitly, we define $x_{0}(b) = 0$ and inductively define $$x_{k + 1}(b) = x_{k}(b) - (k^{(n)}_{b}(0))^{-1} \cdot k^{(n)}_{b}(x_{k}(b)) = x_{k}(b) - k^{(n)}(x_{k}(b)).$$
It is immediate from the definitions that $x_{k}(b)$ is analytic in the variable $b$.
Moreover, it is implicit in the proof of the Kantorovich theorem that the successive approximations lie in $B_{t^{\ast}}(b)$ for all $k \in \mathbb{N}$.
Thus, the functions $x_{k}(b)$ are uniformly bounded provided that $\| b\| < \epsilon$.
Thus, for a state $f \in \mathcal{B}^{\ast}$ and $h \in M_{n}(\mathcal{B})$ with $\| h \| \leq 1$, we have that the complex analytic function
$$ F_{b,k}(z) := f \circ (x_{k}( b + zh)) : B(0, \epsilon /2) \rightarrow \mathbb{C}$$
is a bounded, analytic function provided that $\| b \| < \epsilon / 2$.  Moveover, Kantorvich implies that $F_{b,k}(z) \rightarrow f \circ ( \Phi_{n}(b + zh))$ pointwise.
By Montel's theorem, we may conclude that $f \circ ( \Phi_{n}(b + zh))$ is analytic in $z$.  By Dunford's theorem, we may conclude that $\Phi_{n}(b + zh)$ is analyic in $z$.
This implies that it is G\^{a}teaux differentiable and, therefore, analytic.

With respect to the claim that $\Phi_{n}$ and $h^{(n)}$ agree on an open set, Kantorovich implies that we need only show that $h^{(n)}(b) \in B_{t^{\ast \ast}}(b)$ for $b$ in an open set.
Now, consider $b \in M_{n}(\mathcal{B})$ satisfying $|b^{-1}| > C' > C$ (where $C$ is the same constant as in \ref{subtle}) .
The inequality in \ref{EHbound} allows us to conclude that 
$$ \| h^{(n)}(b) \| = \| F^{(n)}(b^{-1})^{-1} \| \leq (C' + D )^{-1}.$$
Letting $C' \uparrow \infty$, this bound converges to $0$ and, in particular, $h^{(n)}(b) \in B_{t^{\ast \ast}}(b)$ for all invertible  $b \in B_{\gamma}(0)$ when $0 < \gamma < \epsilon / 2$ is small enough .
As $k^{(n)}(h^{(n)}(b)) = 0$, we may conclude that $\Phi_{n}(b) = h^{(n)}(b)$
so that $h^{(n)}$ extends to an analytic function on $B_{\epsilon/2}(0)$ with uniform bound $t^{\ast} + \epsilon$.  As neither $t^{\ast}$ nor $\epsilon$ depend on $n$, uniform analyticity follows.


By \ref{korigin}, we have that $k^{(n)}_{0} (0) =  0.$
 As $0$ is the root of this function and we defined $h^{(n)}$ as being equal to these roots,
it follows that $h^{(n)}(0) = 0$.  As these are open maps defined on a neighborhood of $0$, its image contains a neighborhood of $0$, so that $F^{(n)}(b) = h^{(n)}(b^{-1})^{-1}$ has the property that for all $b$ with $|b|$ is sufficiently large,
there exists $b'$ such that $F(b') = b$.  Moreover, $|b'| \uparrow \infty$  as $|b| \uparrow \infty$.
Thus, $$b^{-1}F^{(n)}(b) = [(F^{(n)})^{\langle -1 \rangle}(b')]^{-1}b' \rightarrow 1_{n}$$  as $|b| \uparrow \infty$.
This fact, combined with the previous paragraph, implies the hypotheses of \ref{nevanlinna} so that $F = F_{\mu}$ for some $\mu \in \Sigma_{0}$.

Thus, we have shown that $\phi^{(n)}(b) + b = (F_{\mu}^{(n)})^{\langle -1 \rangle} (b)$ for all $n \in \mathbb{N}$ and  $b \in H^{+}(\mathcal{B})$
for a fixed $\mu \in \Sigma_{0}$.  It remains to show \ref{F3}, namely that $\mu$ is $\boxplus$-infinitely divisible.
However, this is quite simple since, for $k\in \mathbb{N}$,  the function $\phi/k$ satisfies the hypotheses of \ref{tfcae3} in the statement of our theorem so that, as we have just shown, there exists an element $\mu_{k} \in \Sigma_{0}$ such that $\phi/k = \varphi_{\mu_{k}}$.
Therefore,
$$ \varphi_{\mu_{k}^{\boxplus k}} = k \varphi_{\mu_{k}} = \varphi_{\mu}.$$
Thus, $$\mu = \underbrace{\mu_{k} \boxplus \cdots \boxplus \mu_{k}}_{k \ times} $$
so that $\mu$ is $\boxplus$-infinitely divisible, proving \ref{tfcae1} and, therefore, our theorem.

\end{proof}

\section{Free Probabilistic Consequences.}\label{consequences}

We now prove the converse of \ref{necessary}, completing the equivalence.
Note that we assumed in Theorem \ref{VT} that the relevant non-commutative functions had extension to $H^{+}(\mathcal{B})$.
The following implies that all infinitely divisible distributions have this property so this assumption does not in any way narrow the scope of the result.

\begin{proposition}\label{extension}
Let $\mu \in \Sigma_{0}$.  Then $\mu$ is $\boxplus$-infinitely divisible if and only if  $\varphi_{\mu}$ extends to  $H^{+}(\mathcal{B})$ with range in $\overline{H^{-}(\mathcal{B})}$.
\end{proposition}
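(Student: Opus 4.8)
The plan is to derive both implications from results already established in the excerpt: the ``only if'' direction from the Popa--Vinnikov representation of $\mathcal{R}_\mu$ (Theorem~\ref{rho_pos}\,(\ref{PVremark})) together with Proposition~\ref{necessary}, and the ``if'' direction from the classification Theorem~\ref{VT}.

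\emph{The forward direction.} Suppose $\mu$ is $\boxplus$-infinitely divisible. Proposition~\ref{necessary} already gives that $\varphi_\mu$ extends to $H^+(\mathcal{B})$, so only the assertion about the range is new. By Theorem~\ref{rho_pos}\,(\ref{PVremark}) we may write
$$ \mathcal{R}_\mu^{(n)}(c) = \alpha\otimes 1_n + \sigma\otimes 1_n\big( c(1_n - (X\otimes 1_n)c)^{-1}\big), $$
with $\alpha = \alpha^*$ and $\sigma$ satisfying \ref{CP} and \ref{bound}. As in the remark following that theorem, I would realize $\sigma$ as the restriction of a completely positive --- hence positive and $*$-preserving --- map on the $C^*$-algebra containing $\mathcal{B}$ and $X$ on which $\sigma$ extends; there, for $c\in M_n^-(\mathcal{B})$, the element $c(1_n-(X\otimes 1_n)c)^{-1}$ is defined and has strictly negative imaginary part. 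A positive $*$-map takes an element with negative imaginary part to one with negative imaginary part, and $\alpha\otimes 1_n$ is self-adjoint; hence $\mathcal{R}_\mu^{(n)}(c)\in\overline{M_n^-(\mathcal{B})}$ for every $c\in M_n^-(\mathcal{B})$. Taking $c=b^{-1}$ for $b\in M_n^+(\mathcal{B})$ and using $\varphi_\mu^{(n)}(b)=\mathcal{R}_\mu^{(n)}(b^{-1})$ --- which, by the identity principle for non-commutative analytic functions on the connected set $H^+(\mathcal{B})$, is the extension furnished by Proposition~\ref{necessary} --- shows that $\varphi_\mu$ maps $H^+(\mathcal{B})$ into $\overline{H^-(\mathcal{B})}$.

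\emph{The converse.} Suppose $\varphi_\mu$ extends to a non-commutative analytic map $\phi\colon H^+(\mathcal{B})\to\overline{H^-(\mathcal{B})}$. I would verify condition (\ref{tfcae3}) of Theorem~\ref{VT} for $\phi$ and then apply the implication (\ref{tfcae3})$\Rightarrow$(\ref{tfcae1}). Put $\mathcal{R}^{(n)}(b):=\phi^{(n)}(b^{-1})$. On a neighborhood of $0$ this coincides, by analytic continuation from the domain on which $\varphi_\mu$ was originally defined, with the free-cumulant series $\mathcal{R}_\mu^{(n)}(b)=\sum_{\ell\ge 1}\kappa_\mu^{\ell,n}(b)$; by the cumulant estimate \ref{conv} this series converges uniformly on a fixed non-commutative ball, so $\mathcal{R}$ is uniformly analytic near $0$, and it satisfies $\mathcal{R}^{(n)}(b^*)=\mathcal{R}^{(n)}(b)^*$ since $\mu$, being the distribution of a self-adjoint element, is $*$-preserving and therefore so are its free cumulants. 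Moreover $\|b_k^{-1}\phi^{(n)}(b_k)\|=\|b_k^{-1}\mathcal{R}_\mu^{(n)}(b_k^{-1})\|\le\|b_k^{-1}\|\,\sup_{\|c\|<r}\|\mathcal{R}_\mu^{(n)}(c)\|\to 0$ whenever $|b_k|\uparrow\infty$. Thus (\ref{tfcae3}) holds, and Theorem~\ref{VT} produces a $\boxplus$-infinitely divisible $\mu'\in\Sigma_0$ with $\varphi_{\mu'}=\phi=\varphi_\mu$. Consequently $G_{\mu'}=G_\mu$ near $\infty$, and reading off the Taylor coefficients --- the $\mathcal{B}$-valued moments --- of $G^{(n)}$ gives $\mu=\mu'$, so $\mu$ is $\boxplus$-infinitely divisible.

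\emph{The main obstacle.} The delicate point is reconciling the hypothesis ``range in $\overline{H^-(\mathcal{B})}$'' with the standing hypothesis of Theorem~\ref{VT}, which is stated for functions with values in the open half-plane $H^-(\mathcal{B})$. I expect to handle this in one of two ways: either by observing that the proof of (\ref{tfcae3})$\Rightarrow$(\ref{tfcae1}) uses the codomain only through inequalities of the type $\Im(b-\phi^{(n)}(w))\ge\Im(b)$ and through Nevanlinna representations of scalar slices, both of which tolerate boundary equality; or by first proving the dichotomy that a non-commutative analytic $\phi\colon H^+(\mathcal{B})\to\overline{H^-(\mathcal{B})}$ is either valued in the open $H^-(\mathcal{B})$ or is a self-adjoint constant $b\mapsto\alpha\otimes 1_n$ --- obtained by applying the scalar maximum principle to $z\mapsto f(\phi^{(n)}(x+zy))$ for states $f$ of $M_n(\mathcal{B})$ and directions $y>0$ --- with the degenerate case corresponding to $\mu=\delta_\alpha$, which is trivially $\boxplus$-infinitely divisible. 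A secondary, routine point is the identity-principle argument ensuring that $\phi$ genuinely restricts to $\varphi_\mu$ near $\infty$, hence $\mathcal{R}$ to $\mathcal{R}_\mu$ near $0$.
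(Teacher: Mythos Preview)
Your proposal is correct and follows essentially the same route as the paper: the forward direction is Proposition~\ref{necessary} (with the range assertion read off from the Popa--Vinnikov representation, exactly as in the remark preceding that proposition), and the converse verifies condition~(\ref{tfcae3}) of Theorem~\ref{VT} and invokes that theorem, then identifies $\mu$ with the resulting $\mu'$ via the moment expansion of $G_\mu$. You are in fact more careful than the paper's own proof, which does not address the open-versus-closed half-plane discrepancy you flag as the main obstacle; your dichotomy argument via the scalar maximum principle (constant self-adjoint $\alpha\otimes 1_n$ corresponding to $\delta_\alpha$) is the standard and correct way to close that gap.
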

\begin{proof}
 In order to prove sufficiency, assume that  $\varphi_{\mu}$ extends to  $H^{+}(\mathcal{B})$ .
This distribution satisfies \ref{VT} \ref{tfcae3}.  Indeed, as $|b| \uparrow \infty$, we have that $b^{-1}\varphi_{\mu}(b) = b^{-1}\mathcal{R}_{\mu}(b^{-1}) \rightarrow 0$ 
since all $\mathcal{R}_{\mu}$ for $\mu \in \Sigma_{0}$ are analytic in a neighborhood of $0$.
Thus, our proposition holds.
\end{proof}

Let $CP(\mathcal{B})$ denote the set of all completely positive maps $\rho : \mathcal{B} \rightarrow \mathcal{B}$.

\begin{corollary}\label{semigroups}
Let $\mu \in \Sigma_{0}$ denote a $\boxplus$-infinitely divisible distribution.  
Then there exists a composition semigroup of distributions $\{ \mu^{\boxplus^{\rho}} \}_{ \rho \in CP(\mathcal{B})} \subset \Sigma_{0}$.  Moreover, each of these distributions is $\boxplus$-infinitely divisible.
\end{corollary}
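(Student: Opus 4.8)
The plan is to obtain the semigroup directly from the analytic characterization of $\boxplus$-infinite divisibility in Theorems \ref{rho_pos} and \ref{VT}, thereby bypassing the Fock-space constructions of \cite{ABFN} and \cite{Dima2}. Since $\mu$ is $\boxplus$-infinitely divisible, condition \ref{PVremark} of Theorem \ref{rho_pos} furnishes a self-adjoint $\alpha \in \mathcal{B}$ and a $\mathbb{C}$-linear map $\sigma : \mathcal{B}\langle X \rangle \to \mathcal{B}$ satisfying \ref{CP} and \ref{bound} with
\[
\mathcal{R}_{\mu}^{(n)}(b) = \alpha \otimes 1_{n} + \sigma \otimes 1_{n}\bigl( b\,(1_{n} - (X \otimes 1_{n})b)^{-1} \bigr).
\]
Fixing one such pair $(\alpha,\sigma)$ once and for all, I would define, for $\rho \in CP(\mathcal{B})$, the distribution $\mu^{\boxplus^{\rho}}$ to be the one whose $\mathcal{R}$-transform is given by the same formula with $(\alpha,\sigma)$ replaced by $(\rho(\alpha),\,\rho\circ\sigma)$.

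First I would check that $(\rho(\alpha),\,\rho\circ\sigma)$ is again an admissible pair. The element $\rho(\alpha)$ is self-adjoint because completely positive (in particular positive) maps preserve self-adjointness. For \ref{CP}, given polynomials $P_{1},\dots,P_{m}$ the matrix $[\sigma(P_{i}^{\ast}P_{j})]_{i,j}$ is positive in $M_{m}(\mathcal{B})$, so applying the completely positive map $\rho \otimes 1_{m}$ keeps it positive, i.e.\ $[(\rho\circ\sigma)(P_{i}^{\ast}P_{j})]_{i,j}\ge 0$. For \ref{bound}, recall that a completely positive map on a $C^{\ast}$-algebra is automatically bounded (with $\|\rho\|=\|\rho(1_{\mathcal{B}})\|$ in the unital case), so $\|(\rho\circ\sigma)(b_{1}Xb_{2}\cdots Xb_{\ell+1})\| \le \|\rho\|\,\|b_{1}\|\cdots\|b_{\ell+1}\|\,M^{\ell}$, and enlarging the constant $M$ to $M' := \max\{M,\|\rho\|M\}$ puts this in the form required by \ref{bound}. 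Consequently the function built from $(\rho(\alpha),\,\rho\circ\sigma)$ satisfies condition \ref{tfcae2} of Theorem \ref{VT}, so by the implication \ref{tfcae2} $\Rightarrow$ \ref{tfcae1} of that theorem (which rests on \cite{PV}) it is $\mathcal{R}_{\nu}$ for a $\boxplus$-infinitely divisible $\nu \in \Sigma_{0}$; setting $\mu^{\boxplus^{\rho}} := \nu$ we obtain a well-defined element of $\Sigma_{0}$ that is itself $\boxplus$-infinitely divisible, which already settles the ``moreover'' assertion.

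It remains to verify the composition law. Taking $\rho = \mathrm{id}_{\mathcal{B}}$ returns the pair $(\alpha,\sigma)$, so $\mu^{\boxplus^{\mathrm{id}}}=\mu$. For $\rho_{1},\rho_{2}\in CP(\mathcal{B})$ the composite $\rho_{2}\circ\rho_{1}$ is again completely positive, and running the construction twice sends $(\alpha,\sigma)$ first to $(\rho_{1}(\alpha),\,\rho_{1}\circ\sigma)$ and then to $\bigl(\rho_{2}(\rho_{1}(\alpha)),\,\rho_{2}\circ\rho_{1}\circ\sigma\bigr)=\bigl((\rho_{2}\circ\rho_{1})(\alpha),\,(\rho_{2}\circ\rho_{1})\circ\sigma\bigr)$; since a distribution in $\Sigma_{0}$ is determined by its $\mathcal{R}$-transform, comparing the two formulas yields $\bigl(\mu^{\boxplus^{\rho_{1}}}\bigr)^{\boxplus^{\rho_{2}}}=\mu^{\boxplus^{\rho_{2}\circ\rho_{1}}}$. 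This exhibits $\{\mu^{\boxplus^{\rho}}\}_{\rho\in CP(\mathcal{B})}$ as a composition semigroup of $\boxplus$-infinitely divisible distributions, and specializing to the scalar multiples $\rho=t\cdot\mathrm{id}_{\mathcal{B}}$, $t\ge 0$, recovers the classical $\boxplus$-convolution semigroup $\{\mu^{\boxplus t}\}$ of \cite{Speichop}.

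The step requiring the most care is the bookkeeping around the pair $(\alpha,\sigma)$: one should check that the exponential estimate \ref{bound} really survives with a uniform constant as above (it does, via $M'$), and keep in mind that, although the representation in Theorem \ref{rho_pos} need not be unique, fixing one choice at the outset makes the construction unambiguous. Everything else is essentially a functorial reading of Theorem \ref{VT}; in particular there is no analytic content beyond what that theorem already supplies, which is precisely the point of recovering the result of \cite{ABFN} and \cite{Dima2} by these means.
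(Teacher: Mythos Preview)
Your argument is correct and coincides with the paper's: the paper simply sets $(\varphi_{\mu^{\boxplus\rho}})^{(n)} := (\rho\otimes 1_n)\circ\varphi_\mu^{(n)}$ and invokes Theorem \ref{VT}, which is exactly your construction once the representation $\mathcal{R}_\mu^{(n)} = \alpha\otimes 1_n + (\sigma\otimes 1_n)(\cdot)$ is unwound (so in particular $\mu^{\boxplus\rho}$ is independent of the chosen pair $(\alpha,\sigma)$). You helpfully add the explicit composition law, which the paper leaves implicit; the only quibble is that your adjusted constant $M'$ does not literally cover the $\ell=0$ case of \ref{bound}, but this is harmless since Theorem \ref{VT} only uses \ref{bound} to guarantee convergence of the series.
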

\begin{proof}
Define the non-commutative function $\rho \circ \varphi_{\mu}: H^{+}(\mathcal{B}) \rightarrow  H^{-}(\mathcal{B}) $ by 
$(\rho \circ \varphi_{\mu})^{(n)} := \rho \otimes 1_{n} \circ \varphi_{\mu}^{(n)} : M_{n}(\mathcal{B})^{+} \rightarrow  M_{n}(\mathcal{B})^{-} $ for all $n \in \mathbb{N}$.
This new function satisfies the equivalent hypotheses of \ref{VT} (\ref{tfcae2} and \ref{tfcae3} are quite straightforward to verify) so that there exists a $\boxplus$-infinitely divisible distribution $\mu^{\boxplus \rho} \in \Sigma_{0}$
such that $\rho \circ \varphi_{\mu} =  \varphi_{\mu^{\boxplus \rho}}$, proving our result.
\end{proof}

\bibliographystyle{amsalpha}
\bibliography{AFT}

\providecommand{\bysame}{\leavevmode\hbox to3em{\hrulefill}\thinspace}
\providecommand{\MR}{\relax\ifhmode\unskip\space\fi MR }
\providecommand{\MRhref}[2]{%
  \href{http://www.ams.org/mathscinet-getitem?mr=#1}{#2}
}
\providecommand{\href}[2]{#2}
\begin{thebibliography}{ABFN13}

\bibitem[AB13]{AB}
Michael Anshelevich and Serban Belinschi, \emph{Operator-valued jacobi
  parameters: Examples and counterexamples}, Private Communication. (2013).

\bibitem[ABFN13]{ABFN}
M.~{Anshelevich}, S.~T. {Belinschi}, M.~{Fevrier}, and A.~{Nica},
  \emph{{Convolution powers in the operator-valued framework}}, Trans. Amer.
  Math. Soc. \textbf{365} (2013), 2063--2097.

\bibitem[BMS13]{BS2}
S.~{Belinschi}, T.~{Mai}, and R.~{Speicher}, \emph{{Analytic subordination
  theory of operator-valued free additive convolution and the solution of a
  general random matrix problem}}, ArXiv e-prints (2013).

\bibitem[BPV12]{BPV}
S.~T. Belinschi, M.~Popa, and V.~Vinnikov, \emph{Infinite divisibility and a
  non-commutative {B}oolean-to-free {B}ercovici-{P}ata bijection}, J. Funct.
  Anal. \textbf{262} (2012), no.~1, 94--123. \MR{2852257}

\bibitem[BV93]{BV}
Hari Bercovici and Dan Voiculescu, \emph{Free convolution of measures with
  unbounded support}, Indiana Univ. Math. J. \textbf{42} (1993), no.~3,
  733--773. \MR{1254116 (95c:46109)}

\bibitem[Dun38]{dunford}
Nelson Dunford, \emph{Uniformity in linear spaces}, Trans. Amer. Math. Soc.
  \textbf{44} (1938), no.~2, 305--356. \MR{1501971}

\bibitem[EH70]{EH}
Clifford~J. Earle and Richard~S. Hamilton, \emph{A fixed point theorem for
  holomorphic mappings}, Global {A}nalysis ({P}roc. {S}ympos. {P}ure {M}ath.,
  {V}ol. {XVI}, {B}erkeley, {C}alif., (1968), Amer. Math. Soc., Providence,
  R.I., 1970, pp.~61--65. \MR{0266009 (42 \#918)}

\bibitem[HP74]{Hille}
Einar Hille and Ralph~S. Phillips, \emph{Functional analysis and semi-groups},
  American Mathematical Society, Providence, R. I., 1974, Third printing of the
  revised edition of 1957, American Mathematical Society Colloquium
  Publications, Vol. XXXI. \MR{0423094 (54 \#11077)}

\bibitem[Kan52]{Kant2}
L.~V. Kantorovich, \emph{Functional analysis and applied mathematics}, NBS Rep.
  1509, U. S. Department of Commerce National Bureau of Standards, Los Angeles,
  Calif., 1952, Translated by C. D. Benster. \MR{0053389 (14,766d)}

\bibitem[KR97]{KR}
R.V. Kadison and J.R. Ringrose, \emph{Fundamentals of the theory of operator
  algebras: Advanced theory}, Fundamentals of the Theory of Operator Algebras,
  American Mathematical Society, 1997.

\bibitem[KV12]{KVV}
D.~S. {Kaliuzhnyi-Verbovetskyi} and V.~{Vinnikov}, \emph{{Foundations of
  Noncommutative Function Theory}}, ArXiv e-prints (2012).

\bibitem[NS06]{NS}
Alexandru Nica and Roland Speicher, \emph{Lectures on the combinatorics of free
  probability}, London Mathematical Society Lecture Note Series, vol. 335,
  Cambridge University Press, Cambridge, 2006. \MR{2266879 (2008k:46198)}

\bibitem[Pau02]{Vern}
Vern Paulsen, \emph{Completely bounded maps and operator algebras}, Cambridge
  Studies in Advanced Mathematics, vol.~78, Cambridge University Press,
  Cambridge, 2002. \MR{1976867 (2004c:46118)}

\bibitem[PV13]{PV}
Mihai Popa and Victor Vinnikov, \emph{Non-commutative functions and the
  non-commutative free {L}\'evy-{H}in\v cin formula}, Adv. Math. \textbf{236}
  (2013), 131--157. \MR{3019719}

\bibitem[{Shl}11]{Dima2}
D.~{Shlyakhtenko}, \emph{{On operator-valued free convolution powers}}, ArXiv
  e-prints (2011).

\bibitem[Spe98]{Speichop}
Roland Speicher, \emph{Combinatorial theory of the free product with
  amalgamation and operator-valued free probability theory}, Mem. Amer. Math.
  Soc. \textbf{132} (1998), no.~627, x+88. \MR{1407898 (98i:46071)}

\bibitem[Tay72]{TT1}
Joseph~L. Taylor, \emph{A general framework for a multi-operator functional
  calculus}, Advances in Math. \textbf{9} (1972), 183--252. \MR{0328625 (48
  \#6967)}

\bibitem[Tay73]{TT2}
\bysame, \emph{Functions of several noncommuting variables}, Bull. Amer. Math.
  Soc. \textbf{79} (1973), 1--34. \MR{0315446 (47 \#3995)}

\bibitem[Voi95]{V3}
Dan Voiculescu, \emph{Operations on certain non-commutative operator-valued
  random variables}, Ast\'erisque (1995), no.~232, 243--275, Recent advances in
  operator algebras (Orl{\'e}ans, 1992). \MR{1372537 (97b:46081)}

\bibitem[Voi00]{V1}
\bysame, \emph{The coalgebra of the free difference quotient and free
  probability}, Internat. Math. Res. Notices (2000), no.~2, 79--106.
  \MR{1744647 (2001d:46096)}

\bibitem[Voi04]{V2}
\bysame, \emph{Free analysis questions. {I}. {D}uality transform for the
  coalgebra of {$\partial_{X\colon B}$}}, Int. Math. Res. Not. (2004), no.~16,
  793--822. \MR{2036956 (2005a:46140)}

\bibitem[Wil12]{Wil3}
John~D. Williams, \emph{An analogue of {H}incin's characterization of infinite
  divisibility for operator-valued free probability.}, Submitted for
  publication. (2012).

\bibitem[Zor45a]{zorn1}
Max~A. Zorn, \emph{Characterization of analytic functions in {B}anach spaces},
  Ann. of Math. (2) \textbf{46} (1945), 585--593. \MR{0014190 (7,251e)}

\bibitem[Zor45b]{zorn2}
\bysame, \emph{G\^ateaux differentiability and essential boundedness}, Duke
  Math. J. \textbf{12} (1945), 579--583. \MR{0014596 (7,308a)}

\bibitem[Zor46]{zorn3}
\bysame, \emph{Derivatives and {F}r\'echet differentials}, Bull. Amer. Math.
  Soc. \textbf{52} (1946), 133--137. \MR{0014595 (7,307h)}

\end{thebibliography}
\end{document}